\documentclass{amsart}

\usepackage{amsmath,amsthm,amsfonts,amssymb}
\usepackage{graphicx}
\usepackage{lipsum}
\usepackage{enumerate}
\usepackage{multicol}
\usepackage{enumitem}
\usepackage{dsfont}
\usepackage{mathrsfs}
\usepackage{xcolor}
\usepackage{appendix}
\usepackage{cancel}
\usepackage{tikz}
\input xy
\xyoption{all}
\usepackage[left=2cm,right=2cm,top=2cm,bottom=2cm]{geometry}

\usepackage{hyperref}   

\newcommand{\mb}{\mathbb}

\newcommand{\on}{\operatorname}
\renewcommand{\r}{\mathbf{r}}
\renewcommand{\u}{\mathbf{u}}
\newcommand{\x}{\mathbf{x}}

\newcommand{\GL}{\operatorname{GL}}
\newcommand{\mc}{\mathcal}
\renewcommand{\o}{\mathfrak{o}}

\newtheorem{theorem}{Theorem}[section]

\newtheorem{proposition}[theorem]{Proposition}
\newtheorem{lemma}[theorem]{Lemma}

\theoremstyle{definition}
\newtheorem{definition}[theorem]{Definition}
\newtheorem{example}[theorem]{Example}
\newtheorem{question}[theorem]{Question}

\theoremstyle{remark}
\newtheorem{remark}[theorem]{Remark}

\renewcommand{\r}{\mathbf{r}}
\newcommand{\m}{\mathfrak{m}}
\renewcommand{\u}{\mathbf{u}}
\renewcommand{\bar}{\overline}
\renewcommand{\Bar}{\overline}

\title{Counting subalgebras of $\mathfrak{o}^n$}
\author{Aaron Blas Pereda}
\address{CIEM-FAMAF, Universidad Nacional de C\'ordoba, 5000 C\'ordoba, Argentina}
\email{aaronpereda@mi.unc.edu.ar}

\author{Diego Sulca}
\address{CIEM-FAMAF, Universidad Nacional de C\'ordoba, 5000 C\'ordoba, Argentina}
\email{diego.a.sulca@unc.edu.ar}

\keywords{cotype zeta function, subring zeta function, $p$-adic integration}
\subjclass{11M41, 11S40, 20E07}

\begin{document}

\begin{abstract}
Let $\o$ be a compact discrete valuation ring and $n\geq 2$. We introduce a method to study the cotype zeta function of subalgebras of $\o^n$. This multivariable series encodes the number of finite-index subalgebras $\Lambda$ of the $\o$-algebra $\mathfrak{o}^n$ of a given elementary divisor type.  
We express this zeta function as a finite sum of $\mathfrak{o}$-adic integrals and compute these integrals in many cases.

As a first application, we recover known results in a natural way from our approach. For instance, we obtain a lower bound for the abscissa of convergence of the subalgebra zeta function of $\mathfrak{o}^n$ by exhibiting an explicit pole. We also determine the number of irreducible subrings of $\mathfrak{o}^n$ of small index.

As a second application, we give an explicit formula for the cotype zeta function of subalgebras of $\mathfrak{o}^4$.
\end{abstract}

\maketitle

\section{Introduction}

We consider the ring $\mathbb{Z}^n$ equipped with component-wise addition and multiplication. For each positive integer $k$, let $f_n(k)$ be the number of subrings (= subgroups closed under multiplication) of $\mathbb{Z}^n$ of index $k$ containing the identity $(1,\ldots,1)$. 
The sequence $\big(f_n(k)\big)_{k\ge1}$ and the asymptotic behaviour of its partial sums have attracted considerable attention in recent years because of deep connections with the distribution of orders in number fields; see \cite{KaplanMarcinekTaklooBighash2015,AKKM2021,Isham2022,Isham2023,MishraRay2022}.

A natural generating function for the sequence $\big(f_n(k)\big)_{k\ge1}$ is the \emph{zeta function}
\[
   \zeta_{\mathbb{Z}^n}^{\scriptscriptstyle 1,<}(s) := \sum_{k=1}^\infty \frac{f_n(k)}{k^s}.
\]
More generally, let $L$ be a ring additively isomorphic to $\mb{Z}^n$. It is not necessary to assume that $L$ is commutative nor associative.
Following
Grunewald, Segal, and Smith \cite{GrunewaldSegalSmith1988}, one considers the Dirichlet
series
\[
    \zeta_L^{\scriptscriptstyle <}(s) := \sum_{k=1}^\infty \frac{a_L^{\scriptscriptstyle <}(k)}{k^s},
\]
where $a_L^{\scriptscriptstyle <}(k)$ denotes the number of subrings (= subgroups stable under multiplication) of $L$ of index $k$. 
When $L$ has an identity one may instead look at the series 
\[\zeta_L^{\scriptscriptstyle 1,<}(s)=\sum_{k=1}^\infty \frac{a_L^{\scriptscriptstyle 1,<}(k)}{k^{s}}\]
enumerating only the finite index subrings containing the identity.

\subsection{General properties of subring zeta functions}

The Dirichlet series $\zeta_L^{\scriptscriptstyle <}(s)$ \big(or $\zeta_L^{\scriptscriptstyle 1,<}(s)$\big) enjoys the following fundamental properties:
\begin{enumerate}
    \item 
    It admits an Euler factorization
    \[
        \zeta_L^{\scriptscriptstyle <}(s) = \prod_{p\ \mathrm{prime}} \zeta_{L_p}^{\scriptscriptstyle <}(s),
    \]
    where $L_p := L\otimes_{\mathbb Z}\mathbb Z_p$ ($\mb{Z}_p$ being the ring of $p$-adic integers) and
    \[
        \zeta_{L_p}^{\scriptscriptstyle <}(s)
        := \sum_{\Lambda \leq L_p} [L_p:\Lambda]^{-s}
        = \sum_{e=0}^\infty \frac{a_L^{\scriptscriptstyle <}(p^e)}{p^{es}},
    \] 
    the summation in the middle being over the finite index subrings of $L_p$.
    Each local factor is known to be a rational function in $p^{-s}$
    \cite{GrunewaldSegalSmith1988}.
    
    \item   
    It has a rational abscissa of convergence, and it
    extends meromorphically beyond this abscissa
    \cite{duSautoyGrunewald2000}.
    
    \item   
    If $\alpha_L^{\scriptscriptstyle <}$ is the abscissa of convergence and the continued function has a pole of order $\beta_L^{\scriptscriptstyle <}$ at $s=\alpha_L^{\scriptscriptstyle <}$, then
    \[
        \sum_{k\le N} a_L^{\scriptscriptstyle <}(k)
        \sim C N^{\alpha_L^{\scriptscriptstyle <}} (\log N)^{\beta_L^{\scriptscriptstyle <}-1}
        \qquad (N\to\infty)
    \]
    for some constant $C>0$ \cite{duSautoyGrunewald2000}.
    
    \item 
    For almost all primes $p$, the local factor $\zeta_{L_p}^{\scriptscriptstyle <}(s)$ satisfies a functional equation
    under $p\mapsto p^{-1}$; see \cite{Voll2010}.
\end{enumerate}

\subsection{Connection with number fields}
When $L=\mathcal{O}_K$ is the ring of integers of a number field $K$, the zeta function $\zeta_{\mc{O}_K}^{\scriptscriptstyle{1,<}}(s)$ is also called the {\em order zeta function} of $K$, as the orders of $K$ are precisely the finite index subrings of $\mathcal{O}_K$ containing the identity.

If $K$ is a quadratic field, one easily finds that $\zeta_{\mc{O}_K}^{\scriptscriptstyle{1,<}}(s)=\zeta(s)$, the Riemann zeta function. When $K$ is a cubic field, Datskovsky and Wright \cite{DatskovskyWright1988a} obtained
\[
    \zeta_{\mc{O}_K}^{\scriptscriptstyle{1,<}}(s)
    = \frac{\zeta_K(s)}{\zeta_K(2s)} \, \zeta(2s)\,\zeta(3s-1),
\]
where $\zeta_K(s)$ is the Dedekind zeta function of $K$.

When $K$ is a quartic field, Nakanawa \cite{Nakagawa1996} computed almost all the local factors of $\zeta_{\mc{O}_K}^{\scriptscriptstyle{1,<}}(s)$.
On the other hand, no formula for $\zeta_{\mc{O}_K}^{\scriptscriptstyle{1,<}}(s)$ is known if $[K:\mb{Q}]\geq 5$.
Nevertheless,
Kaplan, Marcinek, and Takloo-Bighash \cite{KaplanMarcinekTaklooBighash2015}
showed that for any number field $K$ of degree $\leq 5$, the abscissa of convergence of $\zeta_{\mc{O}_K}^{\scriptscriptstyle{1,<}}(s)$ is 1. They also obtained an expression for the order $\beta_K$ of the pole at $s=1$ in terms of the Galois group of the normal closure of $K$. As mentioned above, this suffices to obtain an asymptotic formula of the form
\[
    \sum_{k\le N} a_{\mc{O}_K}^{\scriptscriptstyle{1,<}}(k)
    \sim C_K N (\log N)^{\beta_K-1},
\]
for some $C_K>0$.
 
 If $[K:\mb{Q}]=n$, then the zeta functions $\zeta_{\mc{O}_K}^{\scriptscriptstyle{1,<}}(s)$ and $\zeta_{\mb{Z}^n}^{\scriptscriptstyle{1,<}}(s)$ share the same local factors at the completely ramified primes (since for those primes there is a ring isomorphism $\mc{O}_K\otimes\mb{Z}_p=\mb{Z}_p^n$).
The relationship was recently strengthened by the following result, originally conjectured in \cite{KaplanMarcinekTaklooBighash2015}:
\begin{theorem}[{\cite[Theorem E]{Sulca2023}}]
   If $K$ is a number field of degree $n$, then $\zeta_{\mc{O}_K}^{\scriptscriptstyle{1,<}}(s)$ and $\zeta_{\mathbb{Z}^n}^{\scriptscriptstyle{1,<}}(s)$ have the same abscissa of convergence.
\end{theorem}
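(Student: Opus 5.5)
The plan is to compare the two Euler products prime by prime. Write $\zeta_{\mc{O}_K}^{\scriptscriptstyle{1,<}}(s)=\prod_p \zeta_{\mc{O}_K\otimes\mb{Z}_p}^{\scriptscriptstyle{1,<}}(s)$. Each local algebra $\mc{O}_K\otimes\mb{Z}_p\cong\prod_{\mathfrak{p}\mid p}\mc{O}_{K_\mathfrak{p}}$ depends only on the splitting type of $p$, that is, on the residue degrees and ramification indices of the primes above $p$. For all but finitely many $p$ the prime $p$ is unramified, and then the local algebra is $\prod_i \mb{Z}_{p^{f_i}}$ with $\sum f_i=n$. Finitely many Euler factors are each nonzero and holomorphic in a right half-plane, so they do not affect the abscissa of convergence. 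It therefore suffices to compare, uniformly in $p$, the unramified local factors with $\zeta_{\mb{Z}_p^n}^{\scriptscriptstyle{1,<}}(s)$.

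The key step is a uniform two-sided comparison of subring counts. Let $A=\prod_i\mb{Z}_{p^{f_i}}$ and $B=\mb{Z}_p^n$. Both become isomorphic to $\o^n$ after the unramified base change $\o=\mb{Z}_{p^m}$, with $m=\operatorname{lcm}(f_i)$. Unital subrings of $A$ of index $p^e$ are exactly the $\operatorname{Gal}$-stable (twisted Frobenius) unital subrings of $\o^n$ that descend, and similarly for $B$. I would then invoke the general theory: $\alpha$ for such Euler products is determined by the ``leading terms'' of the local factors, namely the data $\sup_e \frac{\log a(p^e)}{e\log p}$ together with the behaviour of the coefficients $a(p^e)$ for small $e$ as polynomials in $p$. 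Concretely, the abscissa is $\max\bigl(1,\limsup\text{ over }(p,e)\text{ of }\log a_p(p^e)/(e\log p)\bigr)$ in the appropriate sense, by the standard lemma that for $\prod_p(1+\sum_e a_p(e)p^{-es})$ with $a_p(e)\le C_e p^{\lambda e}$ one has convergence exactly controlled by these exponents.

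So I will aim to prove the bounds
\[
c^{-e}\, a^{1,<}_{\mb{Z}_p^n}(p^{e'})\;\lesssim\; a^{1,<}_{A}(p^e)\;\lesssim\; C^e\, a^{1,<}_{\mb{Z}_p^n}(p^e)
\]
with the polynomial-in-$p$ growth matching. The upper bound should follow from counting. A unital subring of $A$ of cotype $\alpha$ gives, after base change, a subring of $\o^n$ of the same cotype. The number of $\o$-subalgebras of $\o^n$ of given cotype is given by the cone/$\o$-adic integral with $q=p^m$. The descended count is then bounded by the number of $\mb{F}_p$-points of the same defining variety, which has the same dimension. This is a Lang--Weil type estimate, and it gives the same leading exponent of $p$. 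The lower bound is the most delicate part. For it I would exhibit, inside $A$, explicit families of subrings realizing the dominant growth of $\mb{Z}_p^n$. For instance, take $\mb{Z}_p+p^kM$ for $M$ ranging over $\mb{Z}_p$-submodules with $M\cdot M\subseteq M$ and suitable lattices (the ``$\mb{Z}_p+p\Lambda$'' constructions). These constructions exist in any rank-$n$ étale $\mb{Z}_p$-algebra with counts comparable up to constants.

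The main obstacle is the lower bound. The abscissa of $\zeta_{\mb{Z}^n}^{\scriptscriptstyle{1,<}}$ is not known explicitly, so one cannot merely match a known pole. Instead one must show that \emph{whatever} subring families drive the growth in $\mb{Z}_p^n$ have counterparts in every étale algebra of rank $n$ with the same polynomial count in $p$. I would try to do this geometrically. The subring-of-cotype-$\alpha$ condition defines a variety over $\mb{Z}$ for a fixed algebra type, and the varieties for $A$ and $B$ are twisted forms of each other (forms over $\mb{F}_p$ via the Galois action permuting coordinates). Twisted forms have the same dimension and, by Lang--Weil, the same top-dimensional point count up to the number of geometrically irreducible components fixed by Frobenius. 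The subtle point is ensuring that a Frobenius-fixed top-dimensional component exists. I would handle this by using the dominant component containing the ``$\mb{Z}_p+p^k(\cdot)$''-type subrings, which is canonical and hence Galois-stable.
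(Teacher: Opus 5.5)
The paper does not prove this statement; it imports it from \cite{Sulca2023}. I therefore judge your proposal on its own merits. There is a genuine gap, and it is in the step you flag as delicate.

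\textbf{The lower bound is aimed at a false target, and the key idea is missing.} Your lower-bound strategy asks every unramified rank-$n$ \'{e}tale algebra $A=\prod_i\mathbb{Z}_{p^{f_i}}$ to contain, level by level, comparably many unital subrings to $\mathbb{Z}_p^n$. This is false. Take $n=3$ and $p$ inert, so $A=\mathbb{Z}_{p^3}$. A unital subring of index $p$ contains $pA$, so it gives a $2$-dimensional unital subalgebra of $\mathbb{F}_{p^3}$, i.e.\ a copy of $\mathbb{F}_{p^2}$ inside $\mathbb{F}_{p^3}$. Hence there are none, while $\mathbb{Z}_p^3$ has three; compare the local factors $1+p^{-3s}$ and $(1+p^{-s})^3$ of $\zeta_K(s)/\zeta_K(2s)$. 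In your twisted-form picture these three points are cyclically permuted by the twisted Frobenius, so a ``canonical, hence Galois-stable'' dominant component need not exist.

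More importantly, no lower bound at non-split primes is needed. By Chebotarev, the primes splitting completely in $K$ form a set $S$ of positive density, and $\mathcal{O}_K\otimes\mathbb{Z}_p\cong\mathbb{Z}_p^n$ for $p\in S$. Since all coefficients are nonnegative, $\zeta^{\scriptscriptstyle{1,<}}_{\mathcal{O}_K}(s)$ dominates $\prod_{p\in S}\zeta^{\scriptscriptstyle{1,<}}_{\mathbb{Z}_p^n}(s)$ coefficientwise. What you then have to prove is that restricting the Euler product of $\mathbb{Z}^n$ to a Chebotarev set of primes does not lower its abscissa. That is a statement about how the local factors depend on $p$, and it needs the structure theory of \cite{duSautoyGrunewald2000}. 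For almost all $p$ the local factor is given by a Denef-type formula with $p$-independent numerical data, and its coefficients are $\mathbb{F}_p$-point counts of varieties defined over $\mathbb{Q}$. The abscissa is read off from the data attached to the nonempty ones. Lang--Weil together with Chebotarev (for the compositum of the Galois closure of $K$ with a field of definition of their components) then shows this survives restriction to $S$.

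\textbf{The analytic framework also needs repair.} Your ``standard lemma'' is misstated. With nonnegative coefficients, convergence at $\sigma$ is equivalent to convergence of $\sum_p\sum_{e\geq1}a_p(p^e)p^{-e\sigma}$. So if $a_p(p^e)\asymp p^{\lambda_e}$, the level-$e$ part converges exactly for $\sigma>(\lambda_e+1)/e$. The relevant quantity is therefore $\sup_e(\lambda_e+1)/e$, not $\max(1,\sup_e\lambda_e/e)$: for instance $\prod_p(1-p^{3-2s})^{-1}=\zeta(2s-3)$ has abscissa $2$, not $3/2$. Even this needs bounds uniform in $e$, which per-level Lang--Weil cannot supply. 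Its constants depend on $e$, and the objects counted at level $e$ live over $\mathbb{Z}/p^e\mathbb{Z}$; they are not the $\mathbb{F}_p$-points of a variety defined independently of $p$.

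For the upper bound every splitting type must indeed be controlled, but only from above. Your twisted-form instinct is reasonable, but it has to be run through a resolution. You need a Denef-type formula, uniform in $p$ and $e$, whose numerical data match those of $\mathbb{Z}^n$ and whose coefficients are $O(p^{\dim E_I})$. One way to get both inequalities at once is to pass to the Galois closure $F$ of $K$. There $\mathcal{O}_K\otimes\mathcal{O}_F$ and $\mathcal{O}_F^n$ agree at almost all primes of $F$. It then suffices to know that the abscissa of these cone-integral Euler products is unchanged when one counts $\mathcal{O}_F$-subalgebras of the base change.

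Finally, for the ramified primes, holomorphy in \emph{some} half-plane is not enough. With nonnegative coefficients you need each such local abscissa to be at most that of the remaining product. This does hold, e.g.\ via the cone-integral description of $\zeta^{\scriptscriptstyle{1,<}}_{\mathcal{O}_K}$, but it is not what you argued.
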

Thus the analytic behaviour of $\zeta_{\mathbb{Z}^n}^{\scriptscriptstyle{1,<}}(s)$—in particular its abscissa of convergence—directly informs us about the distribution of orders in degree-$n$ number fields.

\subsection{Known explicit formulas}

Explicit formulas are known for $n\le4$:
\begin{align*}
\zeta_{\mathbb{Z}^2}^{\scriptscriptstyle{1,<}}(s)=& \zeta(s),\\
\zeta_{\mathbb{Z}^3}^{\scriptscriptstyle{1,<}}(s)=& \frac{\zeta(3s-1) \zeta(s)^3}{ \zeta(2s)^2},\\
 \zeta_{\mathbb{Z}^4}^{\scriptscriptstyle{1,<}}(s) =& \prod_p \frac{1}{\bigl(1 - p^{-s}\bigr)^2 \bigl(1 - p^{2} p^{-4s}\bigr) \bigl(1 - p^{3} p^{-6s}\bigr)}  \Bigl(1 + 4 p^{-s} + 2 p^{-2s} + (4p-3) p^{-3s} + (5p-1) p^{-4s}\\
&+ (p^2 - 5p) p^{-5s}  + (3p^2 - 4p) p^{-6s} - 2p^2 p^{-7s} - 4p^2 p^{-8s} - p^2 p^{-9s}\Bigr).
\end{align*}
The case $n=2$ is easy; the case $n=3$ is originally due to Datskovsky and Wright \cite{DatskovskyWright1988a}; and the case $n=4$ is due to Nakanawa \cite{Nakagawa1996}.
These cases were later rederived using combinatorial
methods by Liu \cite{Liu2007}. No explicit formula is known for $n\ge5$.

In contrast, the subgroup zeta function $\zeta_{\mb{Z}^n}(s)$ (enumerating all finite index subgroups) is well-understood:
\[
    \zeta_{\mathbb{Z}^n}(s)
    = \zeta(s)\zeta(s-1)\cdots\zeta(s-n+1);
\]
 see \cite{LubotzkySegal2003} for several proofs.

\subsection{On the abscissa of convergence}
The above formulas for $\zeta_{\mb{Z}^n}^{\scriptscriptstyle{1,<}}(s)$ show that
$\alpha_{\mb{Z}^n}^{\scriptscriptstyle{1,<}}=1$ for $n\leq 4$. 
This is also true for $n=5$, while for $n\geq 6$ we have $1\leq \alpha_{\mb{Z}^n}^{\scriptscriptstyle{1,<}}\leq \frac{n}{2}-\frac{7}{6}$; see \cite[Theorem 6]{KaplanMarcinekTaklooBighash2015}.
A sharper lower bound for $n\ge7$ was obtained by Isham
\cite{Isham2022} following ideas of Brakenhoff \cite{Brakenhoff2009}:
\begin{theorem}[{\cite[Theorem 1.10]{Isham2022}}]
    \label{th:lower-bound}
    Let $n\ge7$. Then
    \[
        d_7(n):=\max_{0\le d \le n-1}
        \frac{d(n-1-d)}{n-1+d}
        < \alpha_{\mathbb{Z}^n}^{\scriptscriptstyle{1,<}}.
    \]
\end{theorem}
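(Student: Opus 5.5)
We bound the local factor at a single prime $p$ from below by restricting to an explicit family of subrings (with identity) of $\mathbb{Z}_p^n$. Since every local factor has nonnegative coefficients and is majorized by the global series, divergence of the local factor at $s$ forces $\alpha_{\mathbb{Z}^n}^{\scriptscriptstyle{1,<}}\ge s$. To get a \emph{strict} inequality, we will instead count over all primes simultaneously, using the Euler product and a uniform lower bound on $a^{\scriptscriptstyle 1,<}_{\mathbb{Z}^n}(p^e)$.

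\textbf{The family (following Brakenhoff).} Fix $d$ with $0\le d\le n-1$. Write $\mathbb{Z}_p^n=\mathbb{Z}_p\cdot 1\oplus M$, where $M$ is the complement of rank $n-1$ spanned by $e_2,\dots,e_n$. We consider lattices
\[
\Lambda=\mathbb{Z}_p\cdot 1+ p^{2}X + pW + p^2\mathbb{Z}_p^n .
\]
Here $W$ is a subspace of dimension $d$ of the $\mathbb{F}_p$-space $p\mathbb{Z}_p^n/p^2\mathbb{Z}_p^n$ modulo $1$, and $X$ is arbitrary. Any such $\Lambda$ is automatically closed under multiplication, because products of two elements of $p\mathbb{Z}_p^n$ lie in $p^2\mathbb{Z}_p^n\subseteq\Lambda$. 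Concretely, $\Lambda=\mathbb{Z}_p1+p\tilde W+p^2\mathbb{Z}_p^n$, where $\tilde W$ lifts a $d$-dimensional subspace of $\mathbb{F}_p^{n}/\mathbb{F}_p1$.

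This simple family already gives index $p^{(n-1-d)+(n-1)}$ and about $p^{d(n-1-d)}$ choices. That yields exponent $\frac{d(n-1-d)}{2(n-1)-d}$, which is not yet optimal. To reach the denominator $n-1+d$, we refine by letting the level-$p^2$ part vary. Take
\[
\Lambda=\mathbb{Z}_p1+pU+p^{2}V+p^{3}\mathbb{Z}_p^n\cdots,
\]
or more efficiently lattices $\Lambda = \mathbb{Z}_p 1 + L$ with $p^{k}\mathbb{Z}_p^n\subseteq L$. Here $L$ is chosen so that $L\cdot L\subseteq p^k$-part, which is guaranteed by requiring $L\subseteq p^{\lceil k/2\rceil}\mathbb{Z}_p^n$. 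The free parameters then come from the position of $L$ between $p^{k/2}$ and $p^{k}$. Choosing $L/p^k\mathbb{Z}_p^n$ of type $(p^{k/2})^{d}$ inside $(p^{k/2}\mathbb{Z}_p/p^k)^{n-1}$, the number of such $L$ is about $p^{(k/2)d(n-1-d)}$, and the index is $p^{(k/2)(n-1)+(k/2)(n-1-d)}$. This gives exponent $\frac{d(n-1-d)}{2n-2-d}$ again. To reach $n-1+d$, we must exploit the identity more cleverly. The plan is to follow Isham: use subrings in which the $d$-dimensional piece sits at level $p^{1}$ while the cokernel sits at level $p^{2}$, with multiplication constraints absorbed. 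This should produce, for each $e$, at least $c\,p^{e\cdot d(n-1-d)/(n-1+d)}$ subrings of index $p^e$ along an arithmetic progression of $e$.

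\textbf{Concluding strictness.} With the count $a(p^{e})\ge p^{e\delta}$, $\delta=d_7(n)$, along the progression $e=m(n-1+d)$, the local factor contains the term $p^{m(\delta(n-1+d)-(n-1+d)s)}$. At $s=\delta$ this is $\sum_m 1$, which diverges, so $\alpha\ge\delta$. For strictness we need extra room. We will show the count at the minimal index of the progression is at least $c\,p^{d(n-1-d)}$ with a constant $c>1$ coming from additional non-generic choices (for example, several $W$ of different types). Alternatively, we sum over primes: $\prod_p(1+p^{d(n-1-d)-(n-1+d)s})$ diverges for $s$ slightly larger than $\delta$ when the exponent is $\ge -1$, i.e.\ for $s\le \frac{d(n-1-d)+1}{n-1+d}$. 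The product over all primes of these terms exceeds $\delta$, which gives $\alpha\ge \delta+\frac1{n-1+d}>\delta$. This Euler-product trick is the clean route to strictness.

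\textbf{Main obstacle.} The hard step is constructing the refined family that realizes the denominator $n-1+d$ with precisely $\asymp p^{d(n-1-d)}$ subrings of index $p^{n-1+d}$, and verifying multiplicative closure in it. I would try lattices $\mathbb{Z}_p1+pA+p^2\mathbb{Z}_p^n$ with $A$ a $d$-dimensional lift of a subspace of $\mathbb{F}_p^n/\mathbb{F}_p1$, as above. I would then check whether the index is actually $p^{(n-1-d)+(n-1)}$, or whether one can use $p^2$ replaced by a sublattice so that the index becomes $p^{n-1+d}$ and the exponent matches $d_7(n)$. Adjusting which level carries the $d$-dimensional versus the $(n-1-d)$-dimensional part is the first thing to try.
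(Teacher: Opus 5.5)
There is a genuine gap as written. The one input everything rests on is never proved: that for each $d$ and every prime $p$ there are at least $p^{d(n-1-d)}$ subrings of $\mathbb{Z}_p^n$ containing $1$ of index $p^{n-1+d}$. Instead you declare your concrete family suboptimal, defer to an unspecified construction ``following Isham'', and list it as the main obstacle.

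That assessment is mistaken. For $\Lambda=\mathbb{Z}_p1+p\tilde W+p^2\mathbb{Z}_p^n$ with $\dim W=w$, you correctly get index $p^{2(n-1)-w}$ and $\binom{n-1}{w}_p\ge p^{w(n-1-w)}$ members. These members are distinct, since $W$ is recovered as the image of $\Lambda\cap p\mathbb{Z}_p^n$ in $p\mathbb{Z}_p^n/(p\mathbb{Z}_p1+p^2\mathbb{Z}_p^n)$. Since $w$ ranges over all of $0,\dots,n-1$, simply take $w=n-1-d$. The index becomes $p^{n-1+d}$ and the count is at least $p^{d(n-1-d)}$. Equivalently, the substitution $d\mapsto n-1-d$ carries $\frac{d(n-1-d)}{2(n-1)-d}$ to $\frac{d(n-1-d)}{n-1+d}$, so the two maxima coincide. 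No ``cleverer'' use of the identity is needed. The only point is to put the $d$ deep directions at level $p^2$ and the $n-1-d$ shallow ones at level $p$. The same swap at levels $p^{2r},p^r$ in your second family gives local divergence at $d_7(n)$, if you want that.

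With this count in hand, your Euler-product step is correct and completes the proof. For real $\sigma$ one has $\zeta^{1,<}_{\mathbb{Z}^n}(\sigma)\ge\prod_p\bigl(1+p^{d(n-1-d)-(n-1+d)\sigma}\bigr)$. This diverges whenever $(n-1+d)\sigma-d(n-1-d)\le 1$, so $\alpha^{1,<}_{\mathbb{Z}^n}\ge\frac{d(n-1-d)+1}{n-1+d}$. Taking $d$ to be a maximizer gives the strict inequality. You should discard the alternative based on a constant $c>1$, since a bounded factor at a fixed prime changes no abscissa. The arithmetic progression is also unnecessary, because the strict bound uses only the single index $p^{n-1+d}$.

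Repaired this way, your route differs from the paper's. The paper computes $\Xi_{n,\{\iota\}}(s_0,s_\iota)$ in closed form via the $\mathfrak{o}$-adic integral over $I$-reduced matrices. It then reads off a pole of the local factor $\Xi_{n-1}(s)=\zeta^{1,<}_{\mathbb{Z}_p^n}(s)$ at $\frac{\iota(n-1-\iota)}{n-1+\iota}$, and cites du Sautoy--Grunewald to pass to a strict global inequality. Your family is exactly the diagonal part $r_0=r$ of the type-$\{\iota\}$ subalgebras counted there, which alone already produces that pole. Your argument is elementary and self-contained, and it gives the explicit margin $\frac{1}{n-1+d}$. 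The paper's yields the full generating function and the exact location of the pole.
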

Strictly speaking, Isham's theorem as originally stated gives the lower bound $d_7(n)$ for the abscissa of convergence of each local factor $\zeta_{\mb{Z}_p^n}^{\scriptscriptstyle{1,<}}(s)$. 
However, the results of \cite[Section 4]{duSautoyGrunewald2000} imply that the abscissa of convergence of the global zeta function $\zeta_{\mb{Z}^n}^{\scriptscriptstyle{1,<}}(s)$ is strictly greater than the  abscissa of convergence of each of its local factors.

The inequality in the theorem implies that $\alpha_{\mb{Z}^n}^{\scriptscriptstyle{1,<}}>1$ for $n\geq 7$ and $\liminf_{n\to \infty}\frac{\alpha_{\mb{Z}^n}^{\scriptscriptstyle{1,<}}}{n}\geq 3-2\sqrt{2}$.

\subsection{Uniformity}

We finally mention an important open question concerning the behaviour of the local factors of $\zeta_{\mathbb{Z}^n}^{\scriptscriptstyle{1,<}}(s)$. Although each $\zeta_{\mathbb{Z}_p^n}^{\scriptscriptstyle{1,<}}(s)$ is a rational function in $p^{-s}$, it is unknown how this rational expression depends on $p$.  We say that $\zeta_{\mathbb{Z}^n}^{\scriptscriptstyle{1,<}}(s)$ is \emph{uniform} if there exists a rational function $W_n(X,Y) \in \mathbb{Q}(X,Y)$ such that
\[
   \zeta_{\mathbb{Z}_p^n}^{\scriptscriptstyle{1,<}}(s) = W_n(p,p^{-s})
\]
for every prime $p$.

\begin{question}[{\cite[Section 7]{Liu2007},\ \cite[Question 3.7]{Voll2015}}]
   Is $\zeta_{\mathbb{Z}^n}^{\scriptscriptstyle{1,<}}(s)$ uniform?
\end{question}

The answer is known to be yes for $n\leq 4$ (as the explicit formulas show), but remains open for $n\geq 5$. This question is equivalent to the following:

\begin{question}\label{question on the uniformity 2}
   Is $f_n(p^e)$ a polynomial in $p$ for every fixed $e\geq 1$?
\end{question}

The strongest results in this direction are the following.

\begin{theorem}[{\cite{Liu2007},\ \cite{AKKM2021}}]\label{th: fn(p,e) is polynomial for all e up to 8}
   For every $e\leq 8$, $f_n(p^e)$ is a polynomial in $n$ and $p$.
\end{theorem}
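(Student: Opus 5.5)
We reduce the count of unital subrings to a count of subrings modulo $p^e$ attached to set partitions, and then show that each resulting count is a polynomial in $p$ with combinatorially determined, polynomial-in-$n$ multiplicities.

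\textbf{Step 1: the decomposition.} A unital subring $\Lambda\subseteq\mathbb{Z}_p^n$ of index $p^e$ contains $p^e\mathbb{Z}_p^n$. Reducing modulo $p$ gives a unital subalgebra of $\mathbb{F}_p^n$. Every such subalgebra is the algebra of functions constant on the blocks of a set partition $\pi$ of $\{1,\dots,n\}$. If $\pi$ has $k$ blocks, then idempotent lifting (Hensel) shows that $\Lambda$ contains the block idempotents. Hence $\Lambda$ splits as a product over the blocks. Each factor is a unital subring of $\mathbb{Z}_p^{m}$, with $m$ the block size, that reduces mod $p$ to the diagonal $\mathbb{F}_p$; we call such a subring \emph{irreducible}. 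Writing $g_m(p^j)$ for the number of irreducible subrings of $\mathbb{Z}_p^m$ of index $p^j$, we obtain
\[
f_n(p^e)=\sum_{\pi}\ \sum_{j_1+\dots+j_k=e}\ \prod_{i} g_{|B_i|}(p^{j_i}).
\]

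\textbf{Step 2: reduction to bounded block sizes.} An irreducible subring of $\mathbb{Z}_p^m$ has index at least $p^{m-1}$, since its image mod $p$ is one-dimensional. So only blocks of size $m\le e+1\le 9$ contribute. The number of set partitions with a prescribed multiset of block sizes, all of size at most $9$ and including the singletons, is a polynomial in $n$ (a binomial-type count). It therefore suffices to show that $g_m(p^j)$ is a polynomial in $p$ for $m-1\le j\le 8$ and $m\le 9$.

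\textbf{Step 3: the irreducible counts.} We would write $\Lambda=\mathbb{Z}_p\cdot 1\oplus M$ with $M\subseteq p\mathbb{Z}_p^m$, and encode $M$ by a Hermite normal form matrix. Closure under multiplication becomes polynomial congruences on the entries. For small $j$, $\Lambda$ contains $p^{j-m+2}\mathbb{Z}_p^m$, so one works with truncated rings $\mathbb{Z}/p^{r}$ of bounded length. The main obstacle is that the number of solutions of these congruence systems need not be polynomial in $p$ in general; this is exactly the open Question~\ref{question on the uniformity 2}. For $j\le 8$ we would stratify by the cotype of $M$, that is, the elementary divisors of $\Lambda/p^e\mathbb{Z}_p^n$. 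In each stratum with small exponents the multiplication conditions are linear, or become linear after fixing lower-order data. This yields counts of the form $p^a$ times counts of subspaces of $\mathbb{F}_p$-spaces, namely Gaussian binomials, which are polynomials. The hard cases are the few strata, in the worst case those at $j=7,8$ with $m$ around $4$ to $6$, where a genuinely quadratic condition appears. There we would first try to show that the solution variety is a union of affine spaces and linear conditions, by explicitly parametrizing the possible products $M\cdot M\subseteq M$. Failing that, we would cite the case analyses of Liu \cite{Liu2007} for $e\le5$ and of \cite{AKKM2021} for $e\le8$, which carry out exactly this finite verification.
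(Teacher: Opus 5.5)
The paper gives no proof of this theorem. It cites Liu and Atanasov--Kaplan--Krakoff--Menzel, and records only the reduction that your Steps~1 and~2 reproduce: Liu's decomposition into irreducible subrings, the recursion for $f_n(p^e)$, and the facts $g_j(p^i)=0$ for $i<j-1$ and $g_j(p^{j-1})=1$. Those two steps are correct.

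However, they only reduce the statement to the polynomiality in $p$ of finitely many numbers $g_m(p^j)$ with $3\le m\le j\le 8$ (using $g_2(p^j)=1$). That is the entire content of the cited theorem. Your Step~3 does not establish it. It is a plan (``we would stratify\dots'', ``we would first try\dots''), and its fallback is to cite Liu and AKKM for exactly the cases that need work, i.e.\ to cite the statement you are proving. If that citation is allowed, the theorem can be cited outright and Steps~1--2 add nothing.

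The heuristic you offer in place of an argument also misdescribes what happens. Quadratic closure conditions are not confined to a few strata at $j=7,8$. Already at $j=m+1$ they take the form $x^2\equiv x$, $xy\equiv 0$ modulo powers of $p$; this is the set $\mathscr{R}(B)$ in the case $\#I=1$. After splitting by residue patterns, the counts carry combinatorial multiplicities rather than being powers of $p$ times Gaussian binomials. Examples are $\theta_{n,\iota}$ in Proposition~\ref{prop: formula for ZI}, and the term $\binom{m}{2}p^{m-2}$ in $g_m(p^{m+1})=\binom{m}{2}p^{m-2}+\binom{m-1}{2}_p$.

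For larger $j-m$, the residue data interact with higher-order congruences in a coupled way. Compare the conditions $u_2\mid a(b-c)$ and $u_3\mid b^2-b-a(c^2-c)$, which already for $\mathfrak{o}^3$ need the case analysis of Lemma~\ref{lem: computation of S}. Showing that every relevant stratum still has a polynomial point count is precisely what is missing.

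To locate the gap sharply:
\begin{itemize}
\item the cases $m\le 4$ follow from the explicit formulas for $\zeta^{\scriptscriptstyle{1,<}}_{\mathbb{Z}^m}$ via the recursion;
\item the cases $j\in\{m,m+1,m+2\}$ are covered by the general results of Liu, AKKM and Isham, the first two being recovered in \S\ref{sec: Application II}.
\end{itemize}
What remains is $g_5(p^8)$, and nothing in your sketch computes it.
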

Mishra and Ray \cite{MishraRay2022}  expressed $f_n(p^9)$ as a polynomial in $n$ and $p$ plus a single remaining term $\gamma(n,p)$, which they conjecture is also polynomial.

A key ingredient in these results is the notion of \emph{irreducible} subrings.

\begin{definition}\label{def: irreducibility}
   A subring $\Lambda \leq \mathbb{Z}_p^n$ is called \emph{irreducible} if every element $(x_1,\dots,x_n)\in\Lambda$ satisfies $x_1\equiv \cdots \equiv x_n \pmod{p}$. We denote by $g_n(p^e)$ the number of irreducible subrings of $\mathbb{Z}_p^n$ of index $p^e$ that contain the identity $(1,\ldots,1)$.
\end{definition}

Liu \cite{Liu2007} proved that every subring $\Lambda \leq \mathbb{Z}_p^n$ can be written as a direct product of irreducible subrings $\Lambda_i \leq \mathbb{Z}_p^{n_i}$. Using this decomposition (with notation adjusted to match \cite{AKKM2021,Isham2022,Isham2023}), he derived the following recursion.

\begin{proposition}[{\cite[Proposition 4.4]{Liu2007}; see also \cite[Proposition 1.7]{Isham2023}}]
   Set $f_0(p^e)=1$ if $e=0$ and $f_0(p^e)=0$ otherwise. Then
   \[
      f_n(p^e)=\sum_{i=0}^e \sum_{j=1}^n \binom{n-1}{j-1} f_{n-j}(p^{e-i})\, g_j(p^i).
   \]
\end{proposition}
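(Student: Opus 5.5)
The recursion will come from Liu's decomposition of a unital subring of $\mathbb{Z}_p^n$ into irreducible pieces, organised by the block that contains the first coordinate. I would first make the decomposition precise. Let $\Lambda\leq\mathbb{Z}_p^n$ be a finite index subring containing $1=(1,\ldots,1)$. Reduction modulo $p$ sends $\Lambda$ to a unital subalgebra $\bar\Lambda$ of $\mathbb{F}_p^n$. Such subalgebras are exactly the sets of functions $\{1,\ldots,n\}\to\mathbb{F}_p$ that are constant on the blocks of some set partition $\pi$ of $\{1,\ldots,n\}$. To see this, note that $\bar\Lambda$ is reduced and consists of $\mathbb{F}_p$-valued functions, so it is spanned by its primitive idempotents, and these idempotents are the indicator functions of the blocks.

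Next I would lift these idempotents to $\Lambda$. Since $\Lambda$ is $p$-adically complete, Hensel lifting of idempotents (iterating $e\mapsto 3e^2-2e^3$) produces idempotents $\varepsilon_B\in\Lambda$ lifting $\bar 1_B$. The idempotents of $\mathbb{Z}_p^n$ are $0/1$ vectors, so $\varepsilon_B=1_B$ exactly. Hence $\Lambda=\bigoplus_B \Lambda 1_B$, and each $\Lambda_B:=\Lambda 1_B$ is a unital subring of $\mathbb{Z}_p^{B}$. By construction every element of $\Lambda_B$ is constant mod $p$ on $B$, so $\Lambda_B$ is irreducible. Moreover,
\[
[\mathbb{Z}_p^n:\Lambda]=\prod_B[\mathbb{Z}_p^B:\Lambda_B].
\]
Conversely, given any set partition and any choice of irreducible unital subrings on the blocks, their direct sum is a unital subring whose associated partition is the given one. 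The partition is recovered because irreducibility forces the reduction of $\Lambda_B$ to be just the constants on $B$. This gives a bijection between unital subrings and pairs consisting of a set partition together with irreducible unital subrings on its blocks. It also uses that $g_j(p^i)$ depends only on $|B|=j$, by permuting coordinates.

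The recursion then follows by conditioning on the block $B$ containing coordinate $1$. If $|B|=j$, there are $\binom{n-1}{j-1}$ choices for $B$. If $\Lambda_B$ has index $p^i$, there are $g_j(p^i)$ choices for it. The remaining data is a unital subring of $\mathbb{Z}_p^{n-j}$ on the complement of $B$, of index $p^{e-i}$, and there are $f_{n-j}(p^{e-i})$ of these. When $j=n$ the complement is empty, and the convention $f_0(p^e)=\delta_{e,0}$ handles this case correctly. Summing over $j$ and $i$ gives the formula.

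The main obstacle is the lifting step: showing that $\Lambda$ really contains $1_B$. I would handle it by arguing that $\Lambda$ is closed in $\mathbb{Z}_p^n$, being a finite index $\mathbb{Z}_p$-submodule. The Newton-type iteration $e\mapsto 3e^2-2e^3$ then converges inside $\Lambda$ to an idempotent. That idempotent reduces to $\bar 1_B$, and the only $0/1$ vector with this reduction is $1_B$.
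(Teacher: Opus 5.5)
Your proof is correct and follows the approach the paper relies on. The paper gives no proof of its own; it cites Liu's decomposition of a unital subring of $\mathbb{Z}_p^n$ into a direct product of irreducible subrings on the blocks of the partition visible modulo $p$, and the recursion follows by conditioning on the block containing the first coordinate, exactly as you do. Your Hensel lifting of the block idempotents is a sound way to establish that decomposition: $\Lambda$ is open, hence closed, and $3e^2-2e^3$ has no constant term, so the iteration stays inside $\Lambda$.
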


Hence, computing $f_n(p^e)$ reduces to determining $g_j(p^i)$ for $1\leq j\leq n$ and $i\leq e$. One easily shows that $g_j(p^i)=0$ for $i<j-1$ and $g_j(p^{j-1})=1$, hence one only needs the values $g_j(p^i)$ for $j\leq \min\{n,e\}$ and $j\leq i\leq e$.
This observation also implies that Question \ref{question on the uniformity 2} has a positive answer  if the following question does:
\begin{question}\label{question on the uniformity 3}
   For $n\leq e$, is $g_n(p^e)$ a polynomial in $p$?
\end{question}

Liu \cite[Proposition 4.3]{Liu2007} gave a positive answer for $e=n$, Atanasov–Kaplan–Krakoff–Menzel \cite[Corollary 3.7]{AKKM2021} for $e=n+1$, and recently Isham \cite[Theorem 1.7]{Isham2023} for $e=n+2$.

\subsection{From $n$ to $n-1$}
By \cite[Proposition 2.3]{Liu2007}, there is an index-preserving correspondence between the lattice of finite index subrings of $\mb{Z}_p^{n}$ containing the identity $(1,\ldots,1)$ and the lattice of subrings of $\mathbb{Z}_p^{n-1}$. This implies that
\begin{align*}
    \zeta_{\mb{Z}_p^{n}}^{\scriptscriptstyle{1,<}}(s)=\zeta_{\mb{Z}_p^{n-1}}^{\scriptscriptstyle{<}}(s)\quad \forall p,\quad\text{and therefore}\quad \zeta_{\mb{Z}^{n}}^{\scriptscriptstyle{1,<}}(s)=\zeta_{\mb{Z}^{n-1}}^{\scriptscriptstyle{<}}(s).
\end{align*}
Moreover, in the above correspondence, the irreducible subrings of $\mb{Z}_p^{n}$ containing the identity correspond to the subrings of $\mb{Z}^{n-1}$ included in $p\mb{Z}_p^n$. Hence, we can recover $f_n(p^e)$ and $g_n(p^e)$ respectively as the coefficients of $p^{-es}$ in the power series
\begin{align*}
   \sum_{\Lambda\leq_s\mb{Z}_p^{n-1}} [\mb{Z}_p^{n-1}:\Lambda]^{-s}\quad\text{and}\quad   \sum_{\substack{\Lambda\leq_s\mb{Z}_p^{n-1}\\ \Lambda\subseteq p\mb{Z}_p^{n-1}}} [\mb{Z}_p^{n-1}:\Lambda]^{-s}.
\end{align*}

\subsection{Results}

Let $\o$ be a compact discrete valuation ring with residue field of cardinality $q$.
In this paper we develop a new method to study the Dirichlet series
\[
    \Xi_n(s)=\sum_{\Lambda\leq_s \o^n}[\o^n:\Lambda]^{-s},
\]
where the sum ranges over finite-index $\o$-subalgebras of $\o^n$
(i.e.\ $\o$-submodules closed under componentwise multiplication).
Our ultimate goal is to determine these series for arbitrary $n$.
In the present paper we do so for $n=2$ and $n=3$.
In the special case $\o=\mathbb{Z}_p$, this recovers the known formulas for
\[
   \zeta_{\mathbb{Z}_p^2}^{\scriptscriptstyle <}(s)=\zeta_{\mathbb{Z}_p^3}^{\scriptscriptstyle 1,<}(s)
   \qquad\text{and}\qquad
   \zeta_{\mathbb{Z}_p^3}^{\scriptscriptstyle <}(s)=\zeta_{\mathbb{Z}_p^4}^{\scriptscriptstyle 1,<}(s).
\]

Unlike several existing approaches to the computation of $\zeta_{\mathbb{Z}_p^n}^{\scriptscriptstyle <}(s)$, our method follows more closely the approach introduced by Voll \cite{Voll2010} to prove local functional equations for subring zeta functions.
Roughly speaking, we introduce multivariable zeta functions
\[
    \Xi_{n,I}\bigl(s_0,(s_\iota)_{\iota\in I}\bigr),
    \qquad I\subseteq [n-1]:=\{1,\ldots,n-1\},
\]
which keep track of the elementary divisor types of subrings $\Lambda\leq_s \o^n$; see \S\ref{sec: representations as integrals}.
The series $\Xi_n(s)$ is then recovered from these multivariable functions via
\[
    \Xi_n(s)=\frac{1}{1-q^{-ns}}+\sum_{\emptyset\neq I\subseteq [n-1]}
    \Xi_{n,I}\bigl(s,(s)_{\iota\in I}\bigr).
\]

We compute $\Xi_{n,\{\iota\}}(s_0,s_\iota)$ explicitly for arbitrary $n$ and any $1\leq \iota\leq n-1$; see \S\ref{sec: the case I of one element}.
This has several applications.
\begin{enumerate}
    \item We obtain the following formulas in a natural way:
    \begin{align*}
        \#\{\Lambda\leq_s \o^n:\ [\o^n:\Lambda]&=q^{\,n+1},\ \Lambda\subseteq \pi \o^n\}=\binom{n}{1}_q,\\
        \#\{\Lambda\leq_s \o^n:\ [\o^n:\Lambda]&=q^{\,n+2},\ \Lambda\subseteq \pi \o^n\}=\binom{n+1}{2}q^{n-1}+\binom{n}{2}_q;
    \end{align*}
    see \S\ref{sec: Application II}.
    In the case $\o=\mathbb{Z}_p$, this yields more direct formulas for $g_{n+1}(p^{n+1})$ and $g_{n+1}(p^{n+2})$.
    
    \item The explicit formula for $\Xi_{n,\{\iota\}}(s,s)$ yields a lower bound for the abscissa of convergence of $\Xi_n(s)$.
    More precisely, for each $1\leq \iota\leq n-1$, we show that the right-most pole of $\Xi_{n,\{\iota\}}(s,s)$ is
    \[
        \max_{1\leq \iota\leq n-1}\frac{\iota(n-\iota)}{n+\iota};
    \]
    in particular, this is a pole of $\Xi_n(s)$; see \S\ref{sec: Application III}.
    This gives a direct proof of Theorem~\ref{th:lower-bound}.
\end{enumerate}

We also describe a strategy for computing $\Xi_{n,\{i_1,i_2\}}(s_0,s_{i_1},s_{i_2})$ for general $n$ and any $\{i_1,i_2\}\subseteq [n-1]$; see \S\ref{sec: the case I of two elements}.
We carry out this computation explicitly in the case $n=3$ and $\{i_1,i_2\}=\{1,2\}$; see Proposition~\ref{prop: formula for Xi for I=12}.

The collection of multivariable series $\Xi_{n,I}\bigl(s_0,(s_\iota)_{\iota\in I}\bigr)$ encodes the same information as the cotype zeta function of $\o$-subalgebras of $\o^n$ (or, equivalently, of $\o$-subalgebras of $\o^{n+1}$ containing the identity) introduced in \cite{ChintaIshamKaplan2024} and, more generally, in \cite{LeeLee2025Cotype}.
As a consequence of our computations, we obtain a formula for the cotype zeta function of $\o$-subalgebras of $\o^3$ (equivalently, of $\o$-subalgebras of $\o^4$ containing the identity); see Proposition~\ref{prop: the cotype zeta function of o4}.
This formula was conjectured in \cite[Conjecture A2]{ChintaIshamKaplan2024}.

\medskip

Our strategy is to express $\Xi_{n,I}\bigl(s_0,(s_\iota)_{\iota\in I}\bigr)$ and its coefficients as sums of integrals
over suitable subsets of $\o\times \o^I\times \operatorname{L}_I(\o)$, where $\operatorname{L}_I$ is a unipotent group
of lower-triangular matrices, multiplied by combinatorial coefficients encoding certain flags over finite fields; see \S\ref{sec: representations as integrals}.

In a forthcoming paper, we apply our method to give a new proof of the formula for $g_n(p^{n+2})$ due to Isham \cite{Isham2023},
and to obtain a formula for $f_n(p^9)$, completing the work of Mishra and Ray \cite{MishraRay2022}.
In addition, we present heuristics based on our strategy for computing $\Xi_{n,I}\bigl(s_0,(s_\iota)_{\iota\in I}\bigr)$
which lead us to suspect that the abscissa of convergence of $\Xi_n(s)$ always lies to the right of $(3-2\sqrt{2})n$.

\medskip

\subsection*{Notation} 
\begin{center}
\begin{tabular}{@{}ll}
    $\on{Mat}_{m\times n}(R)$ & The set $m\times n$-matrices with coefficients in a ring $R$. \\
    $\on{Mat}_n(R)$& $\on{Mat}_{n\times n}(R)$.\\
    $\on{Id}_n$& The $n\times n$-identity matrix.\\
    $[n]$& The integers from $1$ to $n$.\\
    $\{i_1,\ldots,i_l\}_<$& A set of positive integers with $i_1<i_2<\cdots< i_l$.\\
    $\binom{n}{r}_X$& The Gaussian polynomial $\prod_{i=0}^{r-1} \frac{(1-X^{n-i})}{ (1-X^{r-i})}$.\\
    $\# S$& The cardinality of a set $S$.
\end{tabular}    
\end{center}

\section{Local zeta functions as integrals}

Throughout this section, $\o$ denotes a compact discrete valuation ring, $\m$ its
maximal ideal, and $\pi$ a uniformizing parameter, i.e.\ an element such that
$\m=\pi\o$. We write $q=\#(\o/\m)$. For $x$ in the fraction field of $\o$, we define
$|x|=q^{-v(x)}$, where $v$ denotes the $\m$-adic valuation.

Let $n\in \mathbb{N}$. We consider the $\o$-module $\o^n$, endowed with
componentwise multiplication. For a submodule $\Lambda\leq \o^n$, we write
$\Lambda\leq_s \o^n$ to indicate that $\Lambda$ is a finite-index submodule which is
closed under multiplication.

In this section, following the approach of \cite{Voll2010}, we derive an expression
for the generating series
\[
    \sum_{\Lambda\leq_s \o^n}[\o^n:\Lambda]^{-s}
\]
as a sum of integrals reflecting the elementary-divisor types of the submodules.

In \S\ref{sec: parametrization of submodules}, we parametrize submodules of
$\o^n$ by lower-triangular matrices according to their elementary-divisor types.
In \S\ref{sec: preliminaries on integration}, we review some basic facts about
integration, and in \S\ref{sec: representations as integrals}, we present the
integral expression for the above series.

In this section, we use the following special notation. For a matrix $A\in \on{Mat}_{m\times n}(R)$, where $R$ is a commutative
ring, we denote by $\langle A\rangle$ the $R$-submodule generated by the row
vectors of $A$.

\subsection{Parameterization of finite-index submodules}
\label{sec: parametrization of submodules}

Any submodule $\Lambda\le \o^n$ is of the form $\Lambda=\langle A\rangle$ for some
$A\in \operatorname{Mat}_n(\o)$.
The submodule $\Lambda$ has finite index if and only if $A$ is nonsingular, i.e.\
$\det(A)\neq 0$.
If $A,B\in \operatorname{Mat}_n(\o)$ are nonsingular, then $\langle A\rangle=\langle B\rangle$
if and only if $A=UB$ for some $U\in \GL_n(\o)$.

\medskip

Let $\Lambda\le \o^n$ be a finite-index submodule.
By the elementary divisor theorem, either $\Lambda=\pi^{r_0}\o^n$ for some
$r_0\in \mathbb{N}_0$, or there exist a non-empty subset
$I=\{i_1,\ldots,i_\ell\}_<\subseteq [n-1]$ and
$(r_0,\mathbf{r})\in \mathbb{N}_0\times \mathbb{N}^I$ such that
\[
    \Lambda=\bigl\langle \pi^{r_0} D(I,\mathbf{r}) A \bigr\rangle
    \qquad \text{for some } A\in \GL_n(\o),
\]
where
\[
    D(I,\mathbf{r})
    =\operatorname{diag}\!\bigl(
    \underbrace{\underbrace{\underbrace{\pi^{\sum_{j=1}^\ell r_{i_j}},\ldots,\pi^{\sum_{j=1}^\ell r_{i_j}}}_{i_1},
    \pi^{\sum_{j=2}^\ell r_{i_j}},\ldots,\pi^{\sum_{j=2}^\ell r_{i_j}}}_{i_2},
    \ldots,
    \pi^{r_{i_\ell}},\ldots,\pi^{r_{i_\ell}}}_{i_\ell},
    1,\ldots,1
    \bigr).
\]
The index of $\Lambda$ is given by
\begin{equation}\label{eq: formula for the index}
    [\o^n:\Lambda]=q^{nr_0+\sum_{\iota\in I} r_\iota\,\iota}.
\end{equation}
We write $\nu_0(\Lambda):=(I,r_0,\mathbf{r})$.

\medskip

Fix $\emptyset\neq I\subseteq [n-1]$ and $(\mathbf{r},r_0)\in \mathbb{N}^I\times\mathbb{N}_0$.
Our goal is to parametrize the set
\[
    \{\Lambda\le \o^n:\ \nu_0(\Lambda)=(I,r_0,\mathbf{r})\}
\]
by suitable lower-triangular matrices.

Consider the surjective map
\[
   \GL_n(\o)\to \{\Lambda\le \o^n:\ \nu_0(\Lambda)=(I,r_0,\mathbf{r})\},
   \qquad
   A\mapsto \bigl\langle \pi^{r_0}D(I,\mathbf{r})A\bigr\rangle.
\]
Given $A,B\in\GL_n(\o)$, we have
\[
\langle \pi^{r_0}D(I,\mathbf{r})A\rangle
=
\langle \pi^{r_0}D(I,\mathbf{r})B\rangle
\]
if and only if $A=UB$ for some
\[
    U\in \GL_n(\o)\cap D(I,\mathbf{r})^{-1}\GL_n(\o)D(I,\mathbf{r}).
\]
It is straightforward to verify that this intersection is the subgroup
\[
    \Gamma_{I,\mathbf{r}}
    :=
    \left\{
    \begin{pmatrix}
        U_{i_1} & * & \cdots & * & * \\
        \pi^{r_{i_1}}* & U_{i_2-i_1} & \ddots & \vdots & * \\
        \pi^{r_{i_1}+r_{i_2}}* & \pi^{r_{i_2}}* & \ddots & * & * \\
        \vdots & \vdots & \ddots & U_{i_\ell-i_{\ell-1}} & * \\
        \pi^{r_{i_1}+\cdots+r_{i_\ell}}* &
        \pi^{r_{i_2}+\cdots+r_{i_\ell}}* & \cdots &
        \pi^{r_{i_\ell}}* & U_{n-i_\ell}
    \end{pmatrix}
    \right\},
\]
where $U_s$ denotes an element of $\GL_s(\o)$ and $*$ denotes a block matrix of
appropriate size with entries in $\o$. Thus:

\begin{proposition}\label{prop: first parametrization of submodules of type (I,r)}
The map $\Gamma_{I,\mathbf{r}}A\mapsto \langle \pi^{r_0}D(I,\mathbf{r})A\rangle$
induces a bijection
\[
   \Gamma_{I,\mathbf{r}}\backslash\GL_n(\o)
   \cong
   \{\Lambda\le \o^n:\ \nu_0(\Lambda)=(I,r_0,\mathbf{r})\}.
\]
\end{proposition}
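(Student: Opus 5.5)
The plan is to combine the surjectivity of the map $\GL_n(\o)\to\{\Lambda:\nu_0(\Lambda)=(I,r_0,\mathbf{r})\}$ with an identification of its fibres as right cosets of $\Gamma_{I,\mathbf{r}}$. Surjectivity is the elementary divisor theorem, already recorded in the text. Indeed, if $\nu_0(\Lambda)=(I,r_0,\mathbf{r})$, then $\Lambda=\langle P\,\pi^{r_0}D(I,\mathbf{r})A\rangle$ for some $P,A\in\GL_n(\o)$. Since left multiplication by $P$ does not change the row span, $\Lambda=\langle\pi^{r_0}D(I,\mathbf{r})A\rangle$. So it remains to show that two elements give the same submodule exactly when they lie in the same $\Gamma_{I,\mathbf{r}}$-coset. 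This will also show that the map on cosets is well defined and injective.

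Write $D=\pi^{r_0}D(I,\mathbf{r})$, which is nonsingular. By the stated criterion, $\langle DA\rangle=\langle DB\rangle$ if and only if $DA=VDB$ for some $V\in\GL_n(\o)$. This is equivalent to $A=UB$ with $U=D^{-1}VD$. Since $A$ and $B$ are invertible, $U=AB^{-1}\in\GL_n(\o)$ automatically. Hence the condition is $U\in\GL_n(\o)\cap D^{-1}\GL_n(\o)D$. The scalar $\pi^{r_0}$ cancels in the conjugation, so this intersection equals $\GL_n(\o)\cap D(I,\mathbf{r})^{-1}\GL_n(\o)D(I,\mathbf{r})$.

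The main step is to check that this intersection equals $\Gamma_{I,\mathbf{r}}$. Write $D(I,\mathbf{r})=\on{diag}(\pi^{a_1},\ldots,\pi^{a_n})$ with $a_1\ge\cdots\ge a_n=0$, constant on the blocks determined by $I$. For $U\in\GL_n(\o)$, the $(j,k)$ entry of $D U D^{-1}$ is $\pi^{a_j-a_k}u_{jk}$. This entry is integral for all $j,k$ if and only if $v(u_{jk})\ge a_k-a_j$ whenever $j>k$. For a row block below a column block, the difference $a_k-a_j$ is exactly the partial sum $r_{i_m}+\cdots+r_{i_{m'}}$ shown in the displayed pattern. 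On and above the diagonal blocks there is no condition, because there $a_j\ge a_k$.

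It remains to show that $DUD^{-1}$ is then invertible over $\o$, and that the diagonal blocks $U_s$ are invertible. The determinant of $DUD^{-1}$ equals $\det U$, a unit, so integrality of $DUD^{-1}$ already places it in $\GL_n(\o)$. Reducing modulo $\pi$, the entries below the diagonal blocks vanish because all $r_\iota\ge1$. Hence the reduction of $U$ is block upper triangular, so $\det U$ is congruent to the product of the diagonal block determinants. Each diagonal block determinant is therefore a unit. Conversely, any block matrix of the displayed shape with invertible diagonal blocks has unit determinant by the same reduction, so it lies in $\GL_n(\o)$ and satisfies the integrality condition. This gives the bijection.
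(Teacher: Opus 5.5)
Your proposal is correct and follows the same route as the paper: surjectivity comes from the elementary divisor theorem, the fibres are identified with cosets of $\GL_n(\o)\cap D(I,\mathbf{r})^{-1}\GL_n(\o)D(I,\mathbf{r})$, and this intersection is shown to equal $\Gamma_{I,\mathbf{r}}$. The paper calls that last equality straightforward, and your entrywise valuation check, together with the block upper-triangular reduction modulo $\pi$ (using $r_\iota\geq 1$), supplies exactly that verification.
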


\medskip

We now seek a more explicit description of
$\Gamma_{I,\mathbf{r}}\backslash\GL_n(\o)$.
There are two distinguished subgroups of $\Gamma_{I,\mathbf{r}}$.
The first is
\[
    \operatorname{B}_I(\o)=
    \left\{
    \begin{pmatrix}
        U_{i_1} & * & \cdots & * & *\\
        0 & U_{i_2-i_1} & \ddots & \vdots & *\\
        0 & 0 & \ddots & * & *\\
        \vdots & \vdots & \ddots & U_{i_\ell-i_{\ell-1}} & *\\
        0 & 0 & \cdots & 0 & U_{n-i_\ell}
    \end{pmatrix}
    \right\},
\]
and the second is
\[
    \operatorname{L}_{I,\mathbf{r}}
    :=
    \left\{
    \begin{pmatrix}
        \operatorname{Id}_{i_1} & 0 & \cdots & 0 & 0\\
        \pi^{r_{i_1}}* & \operatorname{Id}_{i_2-i_1} & \ddots & \vdots & 0\\
        \pi^{r_{i_1}+r_{i_2}}* & \pi^{r_{i_2}}* & \ddots & 0 & 0\\
        \vdots & \vdots & \ddots & \operatorname{Id}_{i_\ell-i_{\ell-1}} & 0\\
        \pi^{r_{i_1}+\cdots+r_{i_\ell}}* &
        \pi^{r_{i_2}+\cdots+r_{i_\ell}}* &
        \cdots &
        \pi^{r_{i_\ell}}* & \operatorname{Id}_{n-i_\ell}
    \end{pmatrix}
    \right\}.
\]
Every element $U\in\Gamma_{I,\mathbf{r}}$ admits a unique factorization
$U=U'U''$ with $U'\in \operatorname{B}_I(\o)$ and
$U''\in \operatorname{L}_{I,\mathbf{r}}$.
Thus, it is natural to begin by understanding
$\operatorname{B}_I(\o)\backslash\GL_n(\o)$.

\medskip

\medskip

We next describe $\operatorname{B}_I(R)\backslash \GL_n(R)$ for an arbitrary local ring $R$.
Recall that $I=\{i_1,\ldots,i_\ell\}_<$ is an ordered subset of $[n-1]$.
An \emph{$I$-flag in $R^n$} is a chain of $R$-submodules
\[
    R^n=\Lambda_0\supset \Lambda_{i_1}\supset\cdots\supset \Lambda_{i_\ell}\supset \Lambda_n=0
\]
such that $R^n/\Lambda_\iota$ is free of rank $\iota$ for all $\iota\in I$.
In particular, each successive quotient $\Lambda_{i_{j-1}}/\Lambda_{i_j}$ is free.

The natural action of $\GL_n(R)$ on $R^n$ by right multiplication induces a transitive action on the set of $I$-flags.
Let $e_1,\ldots,e_n$ be the standard basis of $R^n$, and consider the canonical $I$-flag
\[
    R^n=\Delta_0\supset \Delta_{i_1}\supset\cdots\supset \Delta_{i_\ell}\supset \Delta_n=0,
\]
where $\Delta_\iota=\langle e_j: j>\iota\rangle$ for $\iota\in I$.
If $A\in \GL_n(R)$ has row vectors $A_1,\ldots,A_n$, then the image of the canonical flag under $A$ is the flag
\[
    \langle A\rangle_I \,:\,
    R^n\supset \langle A_{i_1+1},\ldots,A_n\rangle\supset\cdots\supset
    \langle A_{i_\ell+1},\ldots,A_n\rangle\supset 0.
\]
Moreover, the subgroup $\operatorname{B}_I(R)\subseteq \GL_n(R)$ is precisely the stabilizer of the canonical $I$-flag.
Hence $\operatorname{B}_I(R)\backslash \GL_n(R)$ is naturally identified with the set of $I$-flags in $R^n$.
We use this description to show that a convenient transversal for the left cosets of $\operatorname{B}_I(R)$ in $\GL_n(R)$
is given by the following class of matrices.

\begin{definition}\label{def:I-reduced}
A matrix $A \in \GL_n(R)$ is called \emph{$I$-reduced} if it satisfies the following conditions:
\begin{enumerate}
    \item In each row, the leftmost invertible entry is $1$. We call these entries the \emph{leading $1$'s}.
    \item All entries above a leading $1$ are zero.
    \item Setting $i_0=0$ and $i_{\ell+1}=n$, for each $j=0,\ldots,\ell$,
    the leading $1$'s in rows $i_j+1,\ldots,i_{j+1}$ occur in strictly increasing column positions.
\end{enumerate}
We denote the set of $I$-reduced matrices by $\mathcal{R}_I(R)$.
\end{definition}

\begin{example}\label{ex: examples of reduced matrices}
The following are examples of $I$-reduced matrices for $n=6$ and $I=\{2,4\}$.
The leading $1$'s are displayed in bold.
\begin{align*}
    \text{(a)}\quad & R=\mathbb{F}_p,\quad
    A=\begin{pmatrix}
    0&0&0&\mathbf{1}&0&0\\
    0&0&0&0&\mathbf 1&0\\
    \mathbf 1&0&0&1&-1&0 \\
    0&0&0&0&0&\mathbf 1\\
    0&\mathbf 1&0&3&1&1\\
    0&0&\mathbf 1&1&1&0
    \end{pmatrix};\qquad
    \text{(b)}\quad  R=\mathbb{Z}_p,\quad
    A=\begin{pmatrix}
    0&0&0&\mathbf{1}&0&0\\
    0&0&0&0&\mathbf 1&0\\
    \mathbf 1&0&0&1&p&0 \\
    0&0&0&-p&p&\mathbf 1\\
    p&\mathbf 1&0&3&1&1\\
    p^2&0&\mathbf 1&1&1&p
    \end{pmatrix}.
\end{align*}
\end{example}

\begin{remark}
Reduction modulo the maximal ideal of $R$ sends an $I$-reduced matrix over $R$
to an $I$-reduced matrix over the residue field.
\end{remark}

\begin{proposition}\label{prop: I-reduced matrices and flags}
The map
\[
A \longmapsto \operatorname{B}_I(R)A
\]
from $\mathcal{R}_I(R)$ to $\operatorname{B}_I(R)\backslash \GL_n(R)$ is a bijection.
\end{proposition}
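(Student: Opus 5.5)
Since $\operatorname{B}_I(R)\backslash\GL_n(R)$ is identified with the set of $I$-flags in $R^n$, the statement amounts to two claims. \emph{Existence}: every $A\in\GL_n(R)$ can be brought to an $I$-reduced matrix by left multiplication by an element of $\operatorname{B}_I(R)$. \emph{Uniqueness}: if $A,A'\in\mathcal{R}_I(R)$ and $A'=UA$ with $U\in\operatorname{B}_I(R)$, then $A=A'$. Left multiplication by $\operatorname{B}_I(R)$ is generated by three kinds of row operations. One may apply arbitrary invertible operations inside each block of rows $i_j+1,\ldots,i_{j+1}$. One may also add to a row in block $j$ an $R$-linear combination of rows in later blocks $j'>j$. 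Rows are never added to rows in earlier blocks. I would therefore process the blocks from the last one, rows $i_\ell+1,\ldots,n$, up to the first.

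For existence, I would argue by downward induction on the block index. Suppose the rows of the blocks after block $j$ are already arranged so that the three conditions of Definition~\ref{def:I-reduced} hold among them. Let $C$ be the set of columns of their leading $1$'s. Using rows of later blocks, I clear the columns in $C$ from every row of block $j$. Here I use that entries in the pivot column above the pivot are cleared, while entries left of the pivot in a pivot row are non-units.

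Reducing modulo the maximal ideal $\m_R$, the rows of block $j$ span, together with the later rows, a subspace of $k^n$ of the correct dimension, because $A$ is invertible. Hence the images of the block-$j$ rows are linearly independent and vanish on $C$. Now I run Gaussian elimination over the local ring inside the block, pivoting on units. Take the leftmost column in which some row of the block has a unit entry. Swap that row to the top, scale the entry to $1$, and subtract multiples of it from the other rows of the block to make their entries in this column lie in $\m_R$. Since a unit minus a multiple of $1$ can be arranged to be $0$, these entries become exactly $0$. Then I iterate on the remaining rows, using columns further to the right; this gives strictly increasing pivots within the block. Finally, I must clear entries above each new pivot in rows of earlier blocks. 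This is automatic, since those rows will be processed later and the clearing step handles it. Entries below a pivot, in later blocks, are allowed to be nonzero, as Example~\ref{ex: examples of reduced matrices}(b) confirms.

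The subtle point is condition (1): the leftmost \emph{invertible} entry must be the $1$, although non-unit entries may appear to its left. The elimination above needs to respect this. When a row in block $j$ gets its pivot at the leftmost column where it has a unit, the entries to its left are already non-units. Subsequent subtractions of multiples of pivot rows then change only the columns at or right of the pivots used. The remaining worry is that subtracting a later-block row $A_k$ may introduce entries of $A_k$ left of its pivot. These are non-units, so a unit never appears to the left, and the structure is preserved.

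For uniqueness, I would reduce modulo $\m_R$ and use the flag description. The leading-$1$ positions of $A$ are determined by the reduced flag over the residue field. Indeed, the pivot columns of the last $m$ rows are the Schubert-cell data of the subspace they span. Equal flags over $R$ give equal flags over $k$, so the pivot patterns of $A$ and $A'$ coincide. Then write $U=(u_{ab})$ and compare the columns in the pivot set, block by block from the bottom. In these columns both $A$ and $A'$ have, for block $j$ restricted to pivot columns of blocks $\geq j$, the shape (identity on block $j$'s own pivots, zero on later pivots). This forces the block-diagonal part of $U$ to be the identity and the above-diagonal blocks to vanish, so $U=\operatorname{Id}$. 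I expect this last bookkeeping — showing the restriction to pivot columns is unitriangular and reading off $U=\operatorname{Id}$ — to be the main obstacle, mainly to organize cleanly with the non-unit entries left of pivots.
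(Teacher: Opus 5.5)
Your existence sweep is sound. It differs from the paper only in organization: the paper normalizes the last block, deletes its pivot coordinates and inducts on $n$ with $I\setminus\{i_\ell\}$, while you sweep the blocks in place from the bottom. There is one small inaccuracy there. Subtracting a pivot row \emph{does} change columns to the left of its pivot, because those entries are only non-units. What you need, and what is true, is that these changes lie in $\mathfrak{m}_R$.

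The genuine gap is in uniqueness, at the claim that the rows of block $j$, restricted to block $j$'s own pivot columns, form the identity.

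\begin{itemize}
\item Condition (2) of Definition~\ref{def:I-reduced} only forces zeros \emph{above} a leading $1$.
\item A later row of the same block may have a nonzero entry in an earlier row's pivot column. That entry lies left of the later row's own leading $1$, so it is only constrained to be a non-unit.
\item Hence the restriction is merely lower unitriangular with entries of $\mathfrak{m}_R$ below the diagonal, and $U=\operatorname{Id}$ does not follow.
\end{itemize}

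This is not a bookkeeping issue: with the definition as written, injectivity is false. Take $n=3$, $I=\{1\}$ and $m\in\mathfrak{m}_R$, and set
\[
A_m=\begin{pmatrix}1&0&0\\0&1&0\\0&m&1\end{pmatrix}.
\]
Each $A_m$ is $I$-reduced. Yet $A_m\in\operatorname{B}_{\{1\}}(R)$, so $\operatorname{B}_{\{1\}}(R)A_m=\operatorname{B}_{\{1\}}(R)$ for every $m$. Here $U=A_m\neq\operatorname{Id}$ is exactly the kind of element your argument claims to exclude.

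The paper's own step ``restricting the equality $A''=UB''$ to these pivot columns yields $U=\operatorname{Id}$'' tacitly uses the same missing condition. So does the later claim that every $A\in\mathcal{R}_I(\mathfrak{o})$ can be written as $B\sigma$ with $B\in\operatorname{L}_I(\mathfrak{o})$.

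The repair is to strengthen the definition. Require that each leading $1$ be the only nonzero entry of its column among the rows above it \emph{and} the rows of its own block. Equivalently, $A=B\sigma$ with $B\in\operatorname{L}_I(R)$ and $\sigma\in\mathbb{S}_n$.

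With this strengthening your comparison goes through:
\begin{itemize}
\item Restrict $A'=UA$ to the last block's pivot columns. There the rows of earlier blocks vanish and the last block gives the identity.
\item This shows that the last diagonal block of $U$ is the identity, and that $u_{ab}=0$ whenever $b$ is in the last block and $a$ is not.
\item Then move up one block at a time.
\end{itemize}
Your existence sweep produces matrices of this stronger form, provided you clear each new pivot column in \emph{all} other rows of its block, above as well as below.
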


\begin{proof}
Equivalently, every $I$-flag in $R^n$ is of the form $\langle A\rangle_I$ for a unique
$A\in \mathcal{R}_I(R)$.
We prove this by induction on $n$.

If $n=1$, then $I=\emptyset$ and the claim is immediate.
Assume $n>1$. If $I=\emptyset$, the claim is again immediate, so we assume $I\neq\emptyset$ and write
$I=\{i_1<\cdots<i_\ell\}\subseteq [n-1]$.

Let
\[
R^n=\Lambda_0\supset \Lambda_{i_1}\supset\cdots\supset \Lambda_{i_\ell}\supset \Lambda_n=0
\]
be an $I$-flag. We construct a matrix $A\in \mathcal{R}_I(R)$ such that
$\langle A\rangle_I$ equals this flag.

Choose a basis $\{v_{i_\ell+1},\ldots,v_n\}\subset R^n$ for $\Lambda_{i_\ell}$ and apply Gaussian
elimination to the matrix with row vectors $v_{i_\ell+1},\ldots,v_n$.
Over a local ring, the only difference from the field case is that one must pivot on
\emph{invertible} entries rather than merely nonzero ones.
The resulting matrix has the property that the leftmost invertible entry in each row is $1$,
the column positions of these leading $1$'s increase from top to bottom, and all entries above each
leading $1$ vanish.

Let $A_{i_\ell+1},\ldots,A_n$ be the row vectors of this matrix.
Extend them to a basis
\[
\{v_1,\ldots,v_{i_\ell},A_{i_\ell+1},\ldots,A_n\}
\]
of $R^n$ such that, for each $\iota\in I\setminus\{i_\ell\}$, the subset
$\{v_{\iota+1},\ldots,v_{i_\ell},A_{i_\ell+1},\ldots,A_n\}$ is a basis of $\Lambda_\iota$.
We may further assume that each $v_j$ has zeros in the coordinates corresponding to the leading
$1$'s of $A_{i_\ell+1},\ldots,A_n$.
Deleting these coordinates yields vectors
$v_1',\ldots,v_{i_\ell}'\in R^{i_\ell}$.

Let $I':=I\setminus\{i_\ell\}$.
By the induction hypothesis, there exists an $I'$-reduced matrix
$A'\in \GL_{i_\ell}(R)$ whose associated $I'$-flag coincides with the flag generated by
$v_1',\ldots,v_{i_\ell}'$.
Let $A_1',\ldots,A_{i_\ell}'$ be the rows of $A'$, and lift them to vectors
$A_1,\ldots,A_{i_\ell}\in R^n$ by reinserting zeros in the deleted coordinates.
Then the matrix $A$ with rows $A_1,\ldots,A_n$ is $I$-reduced and satisfies
$\langle A\rangle_I=\{\Lambda_\iota\}$.

We now prove uniqueness.
Suppose $A,B\in \mathcal{R}_I(R)$ satisfy $\langle A\rangle_I=\langle B\rangle_I$.
Let $A''$ (resp.\ $B''$) denote the submatrix consisting of the last $n-i_\ell$ rows of $A$ (resp.\ $B$).
Then $\langle A''\rangle=\langle B''\rangle$, so $A''=UB''$ for some
$U\in \GL_{n-i_\ell}(R)$.

Reducing modulo the maximal ideal of $R$, the matrices $\overline{A''}$ and $\overline{B''}$
are row-equivalent matrices in reduced row echelon form over the residue field, and hence coincide.
In particular, the leading $1$'s in $A''$ and $B''$ occur in the same column positions.
Restricting the equality $A''=UB''$ to these pivot columns yields $U=\operatorname{Id}$, and therefore
$A''=B''$.

Finally, let $A'$ (resp.\ $B'$) be obtained from $A$ (resp.\ $B$) by deleting the last $n-i_\ell$ rows
and the columns containing the leading $1$'s of those rows.
Then $A'$ and $B'$ are $I'$-reduced matrices in $\GL_{i_\ell}(R)$ and satisfy
$\langle A'\rangle_{I'}=\langle B'\rangle_{I'}$.
By induction, $A'=B'$, and hence $A=B$.
\end{proof}

\begin{remark}\label{rem: number of I-reduced matrices}
The proof yields a bijection between $\mathcal{R}_I(R)$ and the set of $I$-flags in $R^n$.
In particular, for $R=\mathbb{F}_q$ we obtain
\[
\#\mathcal{R}_I(\mathbb{F}_q)=\binom{n}{I}_q.
\]
\end{remark}

\medskip

We now return to our original problem.
For $I\subseteq [n-1]$ and $\mathbf{r}\in \mathbb{N}^I$, recall that we defined a subgroup
$\Gamma_{I,\mathbf{r}}\subseteq \GL_n(\o)$ admitting a decomposition
\[
\Gamma_{I,\mathbf{r}}=\operatorname{B}_I(\o)\cdot \operatorname{L}_{I,\mathbf{r}}.
\]
It is straightforward to verify that left multiplication of an $I$-reduced matrix by an element
of $\operatorname{L}_{I,\mathbf{r}}$ preserves the leading $1$'s and produces another $I$-reduced matrix.
Consequently, the quotient
$\operatorname{L}_{I,\mathbf{r}}\backslash \mathcal{R}_I(\o)$
is well defined.

\begin{proposition}\label{prop: parametrization of submodules of type (I,r)}
The map
\[
\operatorname{L}_{I,\mathbf{r}}\backslash \mathcal{R}_I(\o)
\longrightarrow
\{\Lambda\le \o^n:\ \nu_0(\Lambda)=(I,r_0,\mathbf{r})\},
\qquad
\operatorname{L}_{I,\mathbf{r}}A \longmapsto
\langle \pi^{r_0}D(I,\mathbf{r})A\rangle,
\]
is a bijection.
\end{proposition}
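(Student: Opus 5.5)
The plan is to combine Proposition~\ref{prop: first parametrization of submodules of type (I,r)} with Proposition~\ref{prop: I-reduced matrices and flags} by analysing the double structure $\Gamma_{I,\mathbf{r}}\backslash\GL_n(\o)$. By Proposition~\ref{prop: first parametrization of submodules of type (I,r)}, it suffices to show that the natural map
\[
\mathcal{R}_I(\o)\longrightarrow \Gamma_{I,\mathbf{r}}\backslash\GL_n(\o),\qquad A\longmapsto \Gamma_{I,\mathbf{r}}A,
\]
is surjective, and that its fibres are exactly the orbits of $\operatorname{L}_{I,\mathbf{r}}$ acting on $\mathcal{R}_I(\o)$ by left multiplication. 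That this action preserves $\mathcal{R}_I(\o)$ is asserted just before the statement, and I will take it as given.

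\emph{Surjectivity.} Since $\operatorname{B}_I(\o)\subseteq\Gamma_{I,\mathbf{r}}$, every coset $\Gamma_{I,\mathbf{r}}g$ is a union of $\operatorname{B}_I(\o)$-cosets. By Proposition~\ref{prop: I-reduced matrices and flags}, the coset $\operatorname{B}_I(\o)g$ equals $\operatorname{B}_I(\o)A$ for some $A\in\mathcal{R}_I(\o)$, and hence $\Gamma_{I,\mathbf{r}}g=\Gamma_{I,\mathbf{r}}A$.

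\emph{Fibres.} First, if $A'=UA$ with $U\in\operatorname{L}_{I,\mathbf{r}}\subseteq\Gamma_{I,\mathbf{r}}$, then clearly $A$ and $A'$ define the same $\Gamma_{I,\mathbf{r}}$-coset. For the converse, suppose $A,A'\in\mathcal{R}_I(\o)$ satisfy $A'=\gamma A$ with $\gamma\in\Gamma_{I,\mathbf{r}}$. I would factor $\gamma=U'U''$ using the unique factorization $\Gamma_{I,\mathbf{r}}=\operatorname{B}_I(\o)\operatorname{L}_{I,\mathbf{r}}$ stated before the proposition, with $U'\in\operatorname{B}_I(\o)$ and $U''\in\operatorname{L}_{I,\mathbf{r}}$. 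Then $A'=U'(U''A)$. Here $U''A\in\mathcal{R}_I(\o)$, since $\operatorname{L}_{I,\mathbf{r}}$ preserves $I$-reduced matrices. Thus $A'$ and $U''A$ are two $I$-reduced matrices in the same $\operatorname{B}_I(\o)$-coset, and the injectivity part of Proposition~\ref{prop: I-reduced matrices and flags} gives $A'=U''A$. So $A'$ lies in the $\operatorname{L}_{I,\mathbf{r}}$-orbit of $A$.

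Combining the two steps, the map of the statement is the composite of the induced bijection $\operatorname{L}_{I,\mathbf{r}}\backslash\mathcal{R}_I(\o)\cong\Gamma_{I,\mathbf{r}}\backslash\GL_n(\o)$ with the bijection of Proposition~\ref{prop: first parametrization of submodules of type (I,r)}, which proves the claim.

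\emph{Main obstacle.} The only delicate point is the facts I am treating as ``straightforward''. The first is the factorization $\Gamma_{I,\mathbf{r}}=\operatorname{B}_I(\o)\operatorname{L}_{I,\mathbf{r}}$. I would check it by block Gaussian elimination: the diagonal blocks of an element of $\Gamma_{I,\mathbf{r}}$ have unit determinant (reduce mod $\pi$, where the matrix becomes block upper triangular), so one can clear the strictly lower blocks column by column. The divisibility pattern $\pi^{r_{i_a}+\cdots+r_{i_b}}$ is preserved during this elimination because these exponents are additive. The second fact is the stability of $\mathcal{R}_I(\o)$ under left multiplication by $\operatorname{L}_{I,\mathbf{r}}$. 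This holds because the rows added to a given row come from earlier blocks and are multiplied by elements of $\m$ (recall $r_\iota\geq1$). Such additions do not change which entry is the leftmost unit, nor its value $1$, modulo $\m$. Condition (2) of Definition~\ref{def:I-reduced} does need rechecking, since rows in later blocks change. I expect this to follow from the block-lower shape of $\operatorname{L}_{I,\mathbf{r}}$, but it is the step I would verify most carefully.
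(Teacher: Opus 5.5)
Your proposal is correct and is essentially the paper's own argument. Surjectivity comes from $\operatorname{B}_I(\o)\subseteq\Gamma_{I,\mathbf{r}}$ together with Proposition~\ref{prop: I-reduced matrices and flags}; the fibres are identified by writing $\gamma=U'U''$, using that $U''A$ is again $I$-reduced, and applying the uniqueness part of that proposition to force $A'=U''A$. In your side check of that stability, the leading entry actually stays exactly $1$, not merely $1$ modulo $\m$, and condition (2) persists. Both hold because the rows added to a given row lie strictly above it, so by condition (2) they already vanish in the column of that row's leading $1$ and in the columns of the leading $1$'s of all rows below it.
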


\begin{proof}
By Propositions~\ref{prop: I-reduced matrices and flags} and
\ref{prop: first parametrization of submodules of type (I,r)}, we obtain a surjective map
\[
\mathcal{R}_I(\o)\cong \operatorname{B}_I(\o)\backslash \GL_n(\o)
\longrightarrow
\Gamma_{I,\mathbf{r}}\backslash \GL_n(\o)
\cong
\{\Lambda\le \o^n:\ \nu_0(\Lambda)=(I,r_0,\mathbf{r})\},
\qquad
A\longmapsto \langle \pi^{r_0}D(I,\mathbf{r})A\rangle.
\]
If $A,B\in \mathcal{R}_I(\o)$ have the same image, then $A=UB$ for some
$U\in \Gamma_{I,\mathbf{r}}$.
Writing $U=U'U''$ with
$U'\in \operatorname{B}_I(\o)$ and $U''\in \operatorname{L}_{I,\mathbf{r}}$,
we see that $U''B\in \mathcal{R}_I(\o)$ and
$A=U'(U''B)\in \mathcal{R}_I(\o)$.
By Proposition~\ref{prop: I-reduced matrices and flags}, this forces
$U'=\operatorname{Id}$, and hence $A=U''B$.
Thus $\operatorname{L}_{I,\mathbf{r}}A=\operatorname{L}_{I,\mathbf{r}}B$, as claimed.
\end{proof}

\medskip

Finally, consider the algebraic group $\operatorname{L}_I$ consisting of matrices of the form
\[
\begin{pmatrix}
\operatorname{Id}_{i_1} & 0 & \cdots & 0 & 0 \\
* & \operatorname{Id}_{i_2-i_1} & \ddots & \vdots & 0 \\
\vdots & \ddots & \ddots & 0 & 0 \\
* & \cdots & * & \operatorname{Id}_{i_\ell-i_{\ell-1}} & 0 \\
* & \cdots & * & * & \operatorname{Id}_{n-i_\ell}
\end{pmatrix}.
\]

Let $A\in \mathcal{R}_I(\o)$ \big(or $A\in \mathcal{R}_I(\o/\m)$\big).
Then $A$ can be written uniquely as $A=B\sigma$ with
$B\in \operatorname{L}_I(\o)$ and $\sigma\in \mathbb{S}_n$,
where $B\sigma$ denotes the matrix obtained from $B$ by permuting its columns according to $\sigma$.
Conversely, given $B\in \operatorname{L}_I(\o)$ and $\sigma\in \mathbb{S}_n$, the matrix $B\sigma$ is
$I$-reduced if and only if $\overline{B}\sigma$ is $I$-reduced, where
$\overline{(\cdot)}$ denotes reduction modulo $\m$.

\begin{definition}
For $E\in \operatorname{L}_I(\o/\m)$ we define
\[
\mathcal{T}_{I,E}
:=
\{\sigma\in \mathbb{S}_n:\ E\sigma\in \mathcal{R}_I(\o/\m)\}.
\]
\end{definition}

Combining these observations with
Proposition~\ref{prop: parametrization of submodules of type (I,r)}, we obtain the following result.

\begin{proposition}\label{prop: final parametrization of submodules of type (I,r)}
For each $E\in \operatorname{L}_I(\o/\m)$ and each $\sigma\in \mathcal{T}_{I,E}$, the map
\[
\Gamma_{I,\mathbf{r}}\backslash \{B\in \operatorname{L}_I(\o):\ \overline{B}=E\}
\longrightarrow
\{\Lambda\le \o^n:\ \nu_0(\Lambda)=(I,r_0,\mathbf{r})\},
\qquad
\Gamma_{I,\mathbf{r}}B \longmapsto
\langle \pi^{r_0}D(I,\mathbf{r})B\sigma\rangle,
\]
is injective.
Moreover, as $E$ varies in $\operatorname{L}_I(\o/\m)$ and $\sigma$ varies in $\mathcal{T}_{I,E}$,
the images of these maps are disjoint and together form a partition of
$\{\Lambda\le \o^n:\ \nu_0(\Lambda)=(I,r_0,\mathbf{r})\}$.
\end{proposition}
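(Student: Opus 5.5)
The plan is to deduce the statement from Proposition~\ref{prop: parametrization of submodules of type (I,r)} by restricting the bijection
\[
\operatorname{L}_{I,\mathbf r}\backslash\mathcal R_I(\o)\to\{\Lambda:\nu_0(\Lambda)=(I,r_0,\mathbf r)\}
\]
to suitable $\operatorname{L}_{I,\mathbf r}$-stable pieces of $\mathcal R_I(\o)$. Those pieces come from the unique factorization $A=B\sigma$ with $B\in\operatorname{L}_I(\o)$ and $\sigma\in\mathbb S_n$. I read the quotient in the statement as orbits under left multiplication by $\operatorname{L}_{I,\mathbf r}=\Gamma_{I,\mathbf r}\cap\operatorname{L}_I(\o)$. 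If one insists on all of $\Gamma_{I,\mathbf r}$, the factorization argument in the proof of Proposition~\ref{prop: parametrization of submodules of type (I,r)} shows that two $I$-reduced matrices in the same $\Gamma_{I,\mathbf r}$-orbit are already in the same $\operatorname{L}_{I,\mathbf r}$-orbit, so the two readings agree.

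First, I check that the source set is well defined. Since every $r_\iota\geq1$, each element of $\operatorname{L}_{I,\mathbf r}$ lies in $\operatorname{L}_I(\o)$ and reduces to the identity modulo $\m$. Hence left multiplication by $U\in\operatorname{L}_{I,\mathbf r}$ preserves the set $\{B\in\operatorname{L}_I(\o):\overline B=E\}$. Column permutation commutes with left multiplication, so $U(B\sigma)=(UB)\sigma$. For $\sigma\in\mathcal T_{I,E}$ the matrix $\overline{B}\sigma=E\sigma$ is $I$-reduced, and by the stated criterion $B\sigma\in\mathcal R_I(\o)$. Therefore $B\mapsto B\sigma$ is an $\operatorname{L}_{I,\mathbf r}$-equivariant map into $\mathcal R_I(\o)$, and the map in the statement is the composite of the induced map on orbits with the bijection of Proposition~\ref{prop: parametrization of submodules of type (I,r)}.

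Injectivity then reduces to injectivity on orbits. Suppose $\operatorname{L}_{I,\mathbf r}B\sigma=\operatorname{L}_{I,\mathbf r}B'\sigma$. Then $B\sigma=UB'\sigma$ for some $U\in\operatorname{L}_{I,\mathbf r}$. Cancelling the invertible permutation gives $B=UB'$, so $B$ and $B'$ lie in the same orbit.

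For the partition, take any $\Lambda$ with $\nu_0(\Lambda)=(I,r_0,\mathbf r)$. By Proposition~\ref{prop: parametrization of submodules of type (I,r)} we have $\Lambda=\langle\pi^{r_0}D(I,\mathbf r)A\rangle$ for some $A\in\mathcal R_I(\o)$. Write $A=B\sigma$ and set $E=\overline B$. Then $E\sigma=\overline A$ is $I$-reduced by the remark on reduction, so $\sigma\in\mathcal T_{I,E}$, and $\Lambda$ lies in the $(E,\sigma)$-image.

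For disjointness, suppose $\Lambda$ lies in the images for both $(E,\sigma)$ and $(E',\sigma')$. Injectivity in Proposition~\ref{prop: parametrization of submodules of type (I,r)} gives $B'\sigma'=UB\sigma$ for some $U\in\operatorname{L}_{I,\mathbf r}$. Here $UB\in\operatorname{L}_I(\o)$, so uniqueness of the factorization $A=B\sigma$ forces $\sigma'=\sigma$ and $B'=UB$. Reducing modulo $\m$ gives $E'=\overline{UB}=E$.

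The only delicate point is the uniqueness of the decomposition $A=B\sigma$ and the criterion that $B\sigma$ is $I$-reduced if and only if $\overline B\sigma$ is. The paper asserts both before the definition of $\mathcal T_{I,E}$, and I would take them as given. If a justification were needed, I would argue as follows. The leading $1$'s of an $I$-reduced matrix determine $\sigma$. Within each block they sit in increasing columns, and the zeros above them force the permuted matrix to have identity diagonal blocks and zero upper blocks.
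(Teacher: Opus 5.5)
Your proof is correct and follows the paper's route. The paper obtains this proposition in one line, by restricting the bijection $\operatorname{L}_{I,\mathbf r}\backslash\mathcal R_I(\o)\to\{\Lambda\le\o^n:\nu_0(\Lambda)=(I,r_0,\mathbf r)\}$ along the unique factorization $A=B\sigma$; you have written out the equivariance, injectivity, covering and disjointness checks explicitly, and your reading of $\Gamma_{I,\mathbf r}$ as $\operatorname{L}_{I,\mathbf r}$ matches how the paper uses it afterwards, when it evaluates $\mu(\Gamma_{I,\mathbf r})$ via the formula for $\mu(\operatorname{L}_{I,\mathbf r})$.

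One caveat concerns your optional sketch of the factorization. Over $\o$, zeros above the leading $1$'s together with increasing pivot columns only make the sub-diagonal entries of the diagonal blocks non-units, not zero. For $n=3$ and $I=\{1\}$, the matrix with rows $(0,0,1)$, $(1,0,0)$, $(\pi,1,0)$ satisfies the literal definition but is not of the form $B\sigma$ with $B\in\operatorname{L}_I(\o)$. So the definition of $I$-reduced must be read as also requiring each pivot column to vanish in the other rows of its block. This is what Gaussian elimination produces and what the paper's uniqueness argument tacitly uses. It does not affect your main argument, which only invokes the factorization as stated.
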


\medskip

\subsection{Preliminaries on $\pi$-adic integration}
\label{sec: preliminaries on integration}

Every compact topological group admits a (left) Haar measure.
In this subsection, we briefly recall the basic facts about $\pi$-adic integration
that will be used throughout the paper.
We will mainly be concerned with two classes of examples.

\medskip

The first example is the additive group $\o^N$ for some $N\in\mathbb{N}$.
In this case, the normalized Haar measure is the unique Borel measure $\mu$ on $\o^N$
satisfying $\mu(\o^N)=1$ and $\mu(x+\mathcal{A})=\mu(\mathcal{A})$ for every Borel set
$\mathcal{A}\subseteq \o^N$ and every $x\in \o^N$.
For $N=1$, this definition implies that
\[
    \mu(a\o)=|a|
    \quad\text{and}\quad
    \mu\bigl(\{x\in \o:\ |x|=q^{-k}\}\bigr)=(1-q^{-1})q^{-k}
    \qquad (k\ge 0).
\]
The normalized Haar measure on $\o^N$ is the product measure associated with the
normalized Haar measure on $\o$.
When integrating a complex-valued function $f(x_1,\ldots,x_N)$ over an open subset
$U\subseteq \o^N$ with respect to this measure, we write
\[
    \int_U f(x_1,\ldots,x_N)\,|dx_1\cdots dx_N|
    \qquad\text{or simply}\qquad
    \int_U f(\x)\,|d\x|.
\]

We will use Haar measure to reinterpret power series in $q^{-s}$ (or in several variables)
as integrals.

\begin{example}\label{ex: from power series to integral}
The geometric series $\sum_{k=0}^\infty q^{-ks}$ can be expressed as an integral over $\o$
as follows.
For $k\ge 0$ we have
\begin{align*}
  q^{-ks}
  &= q^{-ks}\,\frac{1}{(1-q^{-1})q^{-k}}
     \int_{|x|=q^{-k}} |dx| \\
  &= \frac{1}{1-q^{-1}}
     \int_{|x|=q^{-k}} |x|^{s-1}\,|dx|.
\end{align*}
Summing over all $k\ge 0$ yields
\[
   \sum_{k=0}^\infty q^{-ks}
   =\frac{1}{1-q^{-1}}\int_{\o} |x|^{s-1}\,|dx|.
\]
\end{example}

More generally, for power series of the form $\sum_{k=0}^\infty a_k q^{-ks}$,
the coefficients $a_k$ can often be interpreted as measures of suitable subsets of $\o$
or of $\o^N$ for some $N>1$.
Integral representations of this kind are particularly useful, as they allow one to
partition domains of integration and apply the change-of-variables formula.
Specifically, if $f_1,\ldots,f_N$ are analytic functions defining a bijection between open subsets
$\mathcal{A},\mathcal{B}\subseteq \o^N$, then for any continuous function
$h:\mathcal{B}\to \mathbb{C}$ one has
\[
    \int_{\mathcal{B}} h(y_1,\ldots,y_N)\,|dy_1\cdots dy_N|
    =
    \int_{\mathcal{A}} h\bigl(f_1(\x),\ldots,f_N(\x)\bigr)\,
    \bigl|\det J(f_1,\ldots,f_N)(\x)\bigr|\,
    |dx_1\cdots dx_N|.
\]

\medskip

Our second example of a compact topological group is the multiplicative group
$\operatorname{L}_I(\o)$ associated with a subset $I\subseteq [n-1]$.
Its normalized Haar measure is the unique Borel measure $\mu$ satisfying
$\mu\big(\operatorname{L}_I(\o)\big)=1$ and $\mu(\mathcal{A}B)=\mu(\mathcal{A})$ for every Borel set
$\mathcal{A}\subseteq \operatorname{L}_I(\o)$ and every $B\in \operatorname{L}_I(\o)$.
In particular, if $\Gamma\subseteq \operatorname{L}_I(\o)$ is an open subgroup and
$\mathcal{A}\subseteq \operatorname{L}_I(\o)$ satisfies $\Gamma\mathcal{A}=\mathcal{A}$, then
\begin{equation}\label{eq: cardinal of a quotient}
    \#(\Gamma\backslash \mathcal{A})=\frac{\mu(\mathcal{A})}{\mu(\Gamma)}.
\end{equation}

\medskip

For our applications, we will need to compute the measure of the subgroup
$\Gamma=\operatorname{L}_{I,\mathbf{r}}$, defined in
\S\ref{sec: parametrization of submodules}.
This computation is simplified by the following observation.
The group $\operatorname{L}_I(\o)$ can be identified, as a topological space,
with $\o^N$, where
\begin{equation}\label{eq: the number N}
    N=i_1(i_2-i_1)+i_2(i_3-i_2)+\cdots+i_{\ell-1}(i_\ell-i_{\ell-1})
      +i_\ell(n-i_\ell).
\end{equation}
Let $\mu'$ denote the measure on $\operatorname{L}_I(\o)$ induced by the Haar measure on $\o^N$
under this identification.
We claim that $\mu'=\mu$.

Indeed, $\mu'\big(\operatorname{L}_I(\o)\big)=1$ since $\o^N$ has measure $1$.
Fix $B\in \operatorname{L}_I(\o)$.
Right multiplication by $B$ corresponds to an affine transformation
$\varphi:\o^N\to \o^N$ of the form $\varphi(\x)=\lambda+T\x$,
with $\lambda\in \o^N$ and $T\in \operatorname{Mat}_N(\o)$.
Since $\varphi$ is bijective, we have $T\in \GL_N(\o)$, and hence $\det(T)$ is a unit in $\o$.
Therefore $|\det(T)|=1$, and it follows that
\[
   \mu'(\mathcal{A}B)
   =\mu'\big(\varphi(\mathcal{A})\big)
   =|\det(T)|\,\mu'(\mathcal{A})
   =\mu'(\mathcal{A}).
\]
By uniqueness of Haar measure, this implies $\mu'=\mu$.

Using this identification, one readily obtains
\begin{equation}\label{eq: measure of L(I,r)}
    \mu\bigl(\operatorname{L}_{I,\mathbf{r}}\bigr)
    =q^{-\sum_{\iota\in I} r_\iota\,\iota(n-\iota)}.
\end{equation}

\medskip

\subsection{Representations in terms of $\pi$-adic integrals}
\label{sec: representations as integrals}

We continue to work with the notation and assumptions introduced in the previous sections.
Recall that we write $\Lambda\leq_s \o^n$ to indicate that $\Lambda$ is a finite-index
submodule of $\o^n$ which is closed under componentwise multiplication.

Our main objective is to compute the generating series
\begin{equation}\label{eq:def_Xi_n}
   \Xi_n(s)
   :=
   \sum_{\Lambda\leq_s \o^n}[\o^n:\Lambda]^{-s}.
\end{equation}
For $n=1$, this series is elementary:
\[
\Xi_1(s)
=
\sum_{r_0=0}^\infty q^{-r_0s}
=
\frac{1}{1-q^{-s}}.
\]
Henceforth we assume that $n\geq 2$.

Using the parametrization of finite-index submodules described in
\S\ref{sec: parametrization of submodules}, we may write
\begin{align*}
\Xi_n(s)
&=
\sum_{r_0=0}^\infty
[\o^n:\pi^{r_0}\o^n]^{-s}
+
\sum_{\emptyset\neq I\subseteq [n-1]}
\sum_{(r_0,\mathbf{r})\in \mathbb{N}_0\times \mathbb{N}^I}
q^{-snr_0-\sum_{\iota\in I}s r_\iota\,\iota}\,
h_n(I,r_0,\mathbf{r}),
\end{align*}
where
\[
h_n(I,r_0,\mathbf{r})
:=
\#\bigl\{
\Lambda\leq_s \o^n:\ \nu_0(\Lambda)=(I,r_0,\mathbf{r})
\bigr\}.
\]

In order to keep track of the elementary-divisor types of the subrings,
it is convenient to refine this construction.
For each non-empty subset $I\subseteq [n-1]$, we therefore introduce the
multivariable generating series
\begin{align*}
\Xi_{n,I}\bigl(s_0,(s_\iota)_{\iota\in I}\bigr)
&=
\sum_{(r_0,\mathbf{r})\in \mathbb{N}_0\times \mathbb{N}^I}
q^{-s_0 n r_0-\sum_{\iota\in I} r_\iota\,\iota\, s_\iota}\,
h_n(I,r_0,\mathbf{r}).
\end{align*}
With this notation, we obtain the decomposition
\begin{equation}\label{eq:decomposition_Xi_n}
\Xi_n(s)
=
\frac{1}{1-q^{-ns}}
+
\sum_{\emptyset\neq I\subseteq [n-1]}
\Xi_{n,I}\bigl(s,(s)_{\iota\in I}\bigr).
\end{equation}

\medskip

We first express the quantities $h_n(I,r_0,\mathbf{r})$ in terms of $\pi$-adic integrals.

\begin{proposition}\label{prop: expression for hn as an integral}
For all non-empty $I\subseteq [n-1]$ and $(r_0,\mathbf{r})\in \mathbb{N}_0\times\mathbb{N}^I$, we have
\[
h_n(I,r_0,\mathbf{r})
=
q^{\sum_{\iota\in I} r_\iota\,\iota(n-\iota)}
\sum_{E\in \operatorname{L}_I(\o/\m)}
\tau_{I,E}
\int\limits_{\substack{B\in \mathscr{B}_{I,r_0,\mathbf{r}}\\ \overline{B}=E}}
|dB|,
\]
where
\begin{equation}\label{eq: definition of tau}
\tau_{I,E}
:=
\#\mathcal{T}_{I,E}
=
\#\{\sigma\in \mathbb{S}_n:\ E\sigma\in \mathcal{R}_I(\o/\m)\},
\end{equation}
and $\mathscr{B}_{I,r_0,\mathbf{r}}\subseteq \operatorname{L}_I(\o)$ is the subset defined by the divisibility conditions
\[
\prod_{\substack{\iota\in I\\ j\leq \iota<i}}\pi^{r_\iota}
\;\Big|\;
\pi^{r_0}
\left(\prod_{\substack{\iota\in I\\ i'\leq \iota}}\pi^{r_\iota}\right)
\left(
\sum_{k=1}^i B_{ik}B_{i'k}(B^{-1})_{kj}
\right),
\qquad
1\le j<i\le i'\le n.
\]
\end{proposition}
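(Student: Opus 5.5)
Using Proposition~\ref{prop: final parametrization of submodules of type (I,r)}, we partition the set of submodules with $\nu_0(\Lambda)=(I,r_0,\mathbf{r})$ according to the pair $(E,\sigma)$ with $E\in\operatorname{L}_I(\o/\m)$ and $\sigma\in\mathcal{T}_{I,E}$. We then count the subrings in each piece. For fixed $(E,\sigma)$, the submodules in the piece are $\Lambda=\langle \pi^{r_0}D(I,\mathbf{r})B\sigma\rangle$, one for each coset $\operatorname{L}_{I,\mathbf{r}}B$ with $\overline{B}=E$; the group in the statement of that proposition should be read as $\operatorname{L}_{I,\mathbf{r}}$, which preserves $\{\overline{B}=E\}$ since $\mathbf{r}\in\mathbb{N}^I$.

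First, permuting columns by $\sigma$ is an algebra automorphism of $\o^n$, so $\Lambda$ is closed under multiplication exactly when $\langle \pi^{r_0}D(I,\mathbf{r})B\rangle$ is. In particular, multiplicative closure does not depend on $\sigma$. Second, I will show that this closure is exactly the condition $B\in\mathscr{B}_{I,r_0,\mathbf{r}}$. Granting both facts, the set of admissible $B$ over $E$ is a union of $\operatorname{L}_{I,\mathbf{r}}$-cosets (it is defined by a condition on $\Lambda$). Hence \eqref{eq: cardinal of a quotient} and \eqref{eq: measure of L(I,r)} give the count of subrings over $(E,\sigma)$:
\[
q^{\sum_\iota r_\iota\iota(n-\iota)}\int_{B\in\mathscr{B}_{I,r_0,\mathbf{r}},\ \overline{B}=E}|dB|.
\]
Summing over the $\tau_{I,E}$ choices of $\sigma$ and then over $E$ yields the formula.

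The main step is the closure criterion. Let $M=\pi^{r_0}D(I,\mathbf{r})B$ with rows $M_i=\pi^{r_0}d_iB_i$, where $d_i$ is the $i$-th diagonal entry of $D(I,\mathbf{r})$, so $d_i=\prod_{\iota\in I,\ \iota\geq i}\pi^{r_\iota}$. The lattice $\langle M\rangle$ is closed under multiplication iff each product $M_i\ast M_{i'}$ lies in $\langle M\rangle$. By symmetry it suffices to take $i\leq i'$. Writing $M_i\ast M_{i'}=xM$, we have $x=(M_i\ast M_{i'})B^{-1}D^{-1}\pi^{-r_0}$, whose $j$-th coordinate is
\[
\pi^{r_0}d_id_{i'}\Big(\sum_k B_{ik}B_{i'k}(B^{-1})_{kj}\Big)\big/\big(\pi^{r_0}d_j\big).
\]
The condition is that all these coordinates are integral. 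Since $B$ is lower unitriangular, $B_{ik}=0$ for $k>i$, which truncates the sum at $k\leq i$. For $j\geq i$ the coordinate is automatically integral, because $d_i/d_j\in\o$ when $i\leq j$ and $B^{-1}$ is integral. For $j<i$, note $d_j/d_i=\prod_{\iota\in I,\ j\leq\iota<i}\pi^{r_\iota}$, and integrality becomes exactly the displayed divisibility, with $\pi^{r_0}$ and $d_{i'}$ remaining on the right.

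The only care needed is this bookkeeping of exponents, together with checking the boundary cases $\iota=j$ and $\iota=i$ in the index ranges.
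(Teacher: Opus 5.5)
Your proposal is correct and follows the paper's proof essentially step for step: you partition by $(E,\sigma)$, note that $\sigma$ does not affect multiplicative closure, count $\operatorname{L}_{I,\mathbf{r}}$-cosets via \eqref{eq: cardinal of a quotient} and \eqref{eq: measure of L(I,r)}, and test closure row by row using $B^{-1}$, reducing to $j<i$. You are also right that the quotient in Proposition~\ref{prop: final parametrization of submodules of type (I,r)} should be taken by $\operatorname{L}_{I,\mathbf{r}}$. One typo in your displayed $j$-th coordinate: since $M_i\ast M_{i'}=\pi^{2r_0}d_id_{i'}(B_i\ast B_{i'})$, the numerator should carry $\pi^{2r_0}$ rather than $\pi^{r_0}$, and this is what leaves the factor $\pi^{r_0}$ on the right, as you correctly conclude.
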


\begin{proof}
By Proposition~\ref{prop: final parametrization of submodules of type (I,r)}, we have
\begin{align*}
h_n(I,r_0,\mathbf{r})
&=
\sum_{E\in \operatorname{L}_I(\o/\m)}
\sum_{\sigma\in \mathcal{T}_{I,E}}
\#
\Bigl(
\Gamma_{I,\mathbf{r}}
\backslash
\bigl\{
B\in \operatorname{L}_I(\o):
\ \overline{B}=E
\ \wedge\
\langle \pi^{r_0}D(I,\mathbf{r})B\sigma\rangle\leq_s \o^n
\bigr\}
\Bigr).
\end{align*}
Note that the condition
\(
\langle \pi^{r_0}D(I,\mathbf{r})B\sigma\rangle\leq_s \o^n
\)
is equivalent to
\(
\langle \pi^{r_0}D(I,\mathbf{r})B\rangle\leq_s \o^n
\),
since $\sigma$ merely permutes the coordinates.
Hence, using \eqref{eq: cardinal of a quotient} and \eqref{eq: measure of L(I,r)}, we may rewrite the above as
\begin{align*}
&=
\sum_{E\in \operatorname{L}_I(\o/\m)}
\sum_{\sigma\in \mathcal{T}_{I,E}}
\frac{
\mu\bigl(
\{B\in \operatorname{L}_I(\o):
\ \overline{B}=E
\ \wedge\
\langle \pi^{r_0}D(I,\mathbf{r})B\rangle\leq_s \o^n
\}
\bigr)
}{\mu(\Gamma_{I,\mathbf{r}})}\\
&=
\sum_{E\in \operatorname{L}_I(\o/\m)}
\tau_{I,E}\,
q^{\sum_{\iota\in I} r_\iota\,\iota(n-\iota)}
\,
\mu\bigl(
\{B\in \operatorname{L}_I(\o):
\ \overline{B}=E
\ \wedge\
\langle \pi^{r_0}D(I,\mathbf{r})B\rangle\leq_s \o^n
\}
\bigr).
\end{align*}

We now analyze the condition
\(
\langle \pi^{r_0}D(I,\mathbf{r})B\rangle\leq_s \o^n
\).
Write $D=D(I,\mathbf{r})$ and let $B_1,\ldots,B_n$ denote the row vectors of $B$.
Then the $i$-th row of $\pi^{r_0}DB$ is $\pi^{r_0}D_{ii}B_i$.
Thus the submodule $\langle \pi^{r_0}DB\rangle$ is closed under multiplication if and only if,
for all $1\le i\le i'\le n$, there exists a vector $v\in \o^n$ such that
\[
\pi^{r_0}D_{ii}B_i \cdot \pi^{r_0}D_{i'i'}B_{i'} = v\,\pi^{r_0}DB.
\]

The $j$-th entry of the vector $(B_i\cdot B_{i'})B^{-1}$ is
\(
\sum_{k=1}^i B_{ik}B_{i'k}(B^{-1})_{kj}
\).
Consequently, the above condition is equivalent to
\[
\pi^{r_0}D_{ii}D_{i'i'}
\left(
\sum_{k\le i} B_{ik}B_{i'k}(B^{-1})_{kj}
\right)
\equiv 0 \pmod{D_{jj}}
\]
for all $j=1,\ldots,n$.
Equivalently, $\langle \pi^{r_0}DB\rangle$ is closed under multiplication if and only if
\[
\prod_{\substack{\iota\in I\\ \iota\ge j}}\pi^{r_\iota}
\;\Big|\;
\pi^{r_0}
\left(\prod_{\substack{\iota\in I\\ \iota\ge i}}\pi^{r_\iota}\right)
\left(\prod_{\substack{\iota\in I\\ \iota\ge i'}}\pi^{r_\iota}\right)
\left(
\sum_{k\le i} B_{ik}B_{i'k}(B^{-1})_{kj}
\right)
\]
for all $1\le i\le i'\le n$ and all $j=1,\ldots,n$.
Clearly, it suffices to verify this condition for $j<i$.
\end{proof}

Proceeding as in Example~\ref{ex: from power series to integral} and using
Proposition~\ref{prop: expression for hn as an integral}, we obtain the following
integral representation.

\begin{align*}
\Xi_{n,I}\big(s_0,(s_\iota)_{\iota\in I}\big)
&=
\frac{1}{(1-q^{-1})^{|I|+1}}
\sum_{E\in \operatorname{L}_I(\o/\m)}
\tau_{I,E}
\int\limits_{\substack{(x_0,(x_\iota)_{\iota\in I},B)\in \mathscr{M}'_I\\ \overline{B}=E}}
|x_0|^{s_0 n-1}
\prod_{\iota\in I}
|x_\iota|^{\iota(s_\iota-(n-\iota))-1}
\,|d\mathbf{x}|\,|dB|.
\end{align*}

Here $\mathscr{M}'_I\subseteq \o\times \m^I\times \operatorname{L}_I(\o)$ is the set
of tuples $\big(x_0,(x_\iota)_{\iota\in I},B\big)$ satisfying the divisibility conditions
\begin{align*}
\prod_{\substack{\iota\in I\\ j\le \iota<i}} x_\iota
\;\Big|\;
x_0
\left(
\prod_{\substack{\iota\in I\\ i'\le \iota}} x_\iota
\right)
\mathscr{R}_{ii'j}(B),
\qquad
\text{for all } (j,i,i')\in [n]^3 \text{ with } j<i\le i',
\end{align*}
where
\[
\mathscr{R}_{ii'j}(B)
:=
\sum_{k\le i} B_{ik}B_{i'k}(B^{-1})_{kj}.
\]

Finally, performing the change of variables $x_\iota\mapsto \pi x_\iota$ for
$\iota\in I$, we obtain
\begin{align*}
\Xi_{n,I}\big(s_0,(s_\iota)_{\iota\in I}\big)
&=
\frac{q^{-\sum_{\iota\in I}\iota(s_\iota-n+\iota)}}{(1-q^{-1})^{|I|+1}}
\sum_{E\in \operatorname{L}_I(\o/\m)}
\tau_{I,E}
\int\limits_{\substack{(x_0,(x_\iota)_{\iota\in I},B)\in \mathscr{M}_I\\ \overline{B}=E}}
|x_0|^{s_0 n-1}
\prod_{\iota\in I}
|x_\iota|^{\iota(s_\iota-(n-\iota))-1}
\,|d\mathbf{x}|\,|dB|,
\end{align*}
where now $\mathscr{M}_I\subseteq \o\times \o^I\times \operatorname{L}_I(\o)$ is defined by
\begin{align*}
\prod_{\substack{\iota\in I\\ j\le \iota<i}} \pi x_\iota
\;\Big|\;
x_0
\left(
\prod_{\substack{\iota\in I\\ i'\le \iota}} \pi x_\iota
\right)
\mathscr{R}_{ii'j}(B),
\qquad
\text{for all } (j,i,i')\in [n]^3 \text{ with } j<i\le i'.
\end{align*}

\begin{remark}
In general, it seems difficult to give a closed formula for the numbers
$\tau_{I,E}$.
Note, however, that $\tau_{I,E}=\tau_{I,E'}$ whenever $E$ and $E'$ have the same
pattern of zero entries.
Thus, it suffices to formulate this problem over the field $\mathbb{F}_2$.

In the computations carried out in the subsequent sections, we will only need
to evaluate certain sums of these quantities, which is often considerably
simpler.
For instance, we have
\begin{equation}\label{eq: sum of tau}
\sum_{E\in \operatorname{L}_I(\o/\m)} \tau_{I,E}
=
\#\mathcal{R}_I(\o/\m)
=
\binom{n}{I}_q.
\end{equation}
\end{remark}

\medskip

\section{The case $\# I=1$}\label{sec: the case  I of one element}

Let $n\geq 2$ and $I=\{\iota\}$ for some $1\leq \iota<n$. 
In this section, we compute
\begin{align*}
    \Xi_{n,I}(s_0,s_\iota)=\frac{q^{-\iota(s_\iota-n+\iota)}}{(1-q^{-1})^2}\sum_{E\in \on{L}_I(\o/\m)}\tau_{I,E}\int\limits_{\substack{(x_0,x_\iota,B)\in \mathscr{M}_{I}\\\bar{B}=E}}|x_0|^{s_0n-1}|x_\iota|^{\iota(s_\iota-(n-\iota))-1}|d\x||dB|
\end{align*}
and present some applications. In particular, we obtain new proofs for the formulas for $g_n(p^n)$ and $g_n(p^{n+1})$ obtained in \cite{Liu2007} and \cite{AKKM2021}, and for the lower bound for the abscissa of convergence of $\zeta_{\mb{Z}^n}^{\scriptscriptstyle{1,<}}(s)$ obtained in \cite{Isham2022}.

If $S$ is a subset of $\o$, by $x|S$ we mean that $x|y$ for all $y\in S$.

\medskip

We begin by describing  $\mathscr{M}_{I}$. This is the set of those $(x_0,x_\iota,B)\in \o^2\times \on{L}_{I}(\o)$ satisfying the condition
\begin{align*}
    \pi x_{\iota}| x_0 \mathscr{R}(B) 
\end{align*}
where
\begin{align*}
\mathscr{R}(B)&=\{\mathscr{R}_{i,i',j}(B)\, :\, j\leq \iota<i\leq i' \}\\
   &=\{ B_{ij}(B_{ij}-1)\, :\, \iota +1\leq i\leq n,\ 1\leq j\leq \iota\} \cup \{ B_{ij}B_{i'j}\, :\, \iota+1\leq i<i'\leq n,\ 1\leq j\leq \iota\}\cup\{0\}.
\end{align*}

We next compute a preliminary integral.
\begin{definition}
    A matrix is said to be very elemental if its coefficient are 0 or 1, and if in each column there is at most one 1.
\end{definition} 
\begin{lemma}\label{lem: preliminary integral for I=1}
    Let $E=\begin{pmatrix}
        \on{Id}_{\iota}&0\\ E'&\on{Id}_{n-\iota}
    \end{pmatrix}\in \on{L}_I(\o/\m)$. Then
\begin{align*}
    \int\limits_{\substack{\overline{B}=E\\ \pi x_\iota|\mathscr{R}(B)}}
    |x_\iota|^{s-1}|d x_\iota||dB|= \delta_E\cdot \frac{(1-q^{-1})q^{-(n-\iota)\iota}}{1-q^{-(s+(n-\iota)\iota)}}.
\end{align*}
where $\delta_E=1$ or $0$ according to whether $E'$ is very elemental or not. 
\end{lemma}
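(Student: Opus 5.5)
The plan is to reduce the integral to a product of one-variable computations. Write $B=\begin{pmatrix}\on{Id}_\iota&0\\ C&\on{Id}_{n-\iota}\end{pmatrix}$ with $C=(B_{ij})\in\on{Mat}_{(n-\iota)\times\iota}(\o)$. By the identification of $\on{L}_I(\o)$ with $\o^N$, where $N=\iota(n-\iota)$, the measure $|dB|$ is the additive Haar measure on the entries of $C$. The constraint $\overline B=E$ says $C\equiv E'\pmod{\m}$. The condition $\pi x_\iota\mid\mathscr R(B)$ splits into:
\begin{itemize}
\item $\pi x_\iota\mid B_{ij}(B_{ij}-1)$ for all entries;
\item $\pi x_\iota\mid B_{ij}B_{i'j}$ for distinct rows $i\neq i'$ in the same column $j$.
\end{itemize}
Everything decouples column by column, given $x_\iota$. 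I will therefore compute the inner integral over $C$ for fixed $x_\iota$ with $v(x_\iota)=k$, and then integrate over $x_\iota$.

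\textbf{Step 1: $\delta_E=0$ when $E'$ is not very elemental.} Each entry satisfies $\pi\mid B_{ij}(B_{ij}-1)$, so $\overline{B_{ij}}\in\{0,1\}$. Also $\pi\mid B_{ij}B_{i'j}$, so two entries in one column cannot both reduce to $1$. Hence if $E'$ is not very elemental, the domain is empty and the integral is $0$.

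\textbf{Step 2: $E'$ very elemental.} Fix $k=v(x_\iota)$, so the conditions are modulo $\pi^{k+1}$.
\begin{itemize}
\item An entry with $\overline{B_{ij}}=1$ is a unit, so $B_{ij}(B_{ij}-1)$ forces $B_{ij}\equiv1\pmod{\pi^{k+1}}$. Then $B_{ij}B_{i'j}$ forces every other entry of that column to be $\equiv0\pmod{\pi^{k+1}}$.
\item An entry with $\overline{B_{ij}}=0$ has $B_{ij}-1$ a unit, so it must be $\equiv0\pmod{\pi^{k+1}}$. The product conditions are then automatic.
\end{itemize}
In every case each of the $\iota(n-\iota)$ entries is pinned to a single residue class modulo $\pi^{k+1}$. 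So the $C$-measure is exactly $q^{-(k+1)\iota(n-\iota)}$, independent of which very elemental $E'$ we have (the residue condition $\overline{B}=E$ is already included).

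\textbf{Step 3: sum over $k$.} The integral becomes
\[
\sum_{k\ge0}(1-q^{-1})q^{-k}\,q^{-k(s-1)}\,q^{-(k+1)\iota(n-\iota)}=(1-q^{-1})q^{-(n-\iota)\iota}\sum_{k\ge0}q^{-k(s+(n-\iota)\iota)},
\]
which is the claimed formula.

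The only delicate point is Step 2: checking that the product conditions involving one entry congruent to $1$ and others congruent to $0$ force the others into $\pi^{k+1}\o$ exactly, and never impose anything weaker or stronger. This uses that units are invertible, and it is routine.
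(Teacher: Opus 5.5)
Your proof is correct and follows the paper's argument. Both show that the divisibility conditions amount to $C\equiv\epsilon \pmod{\pi x_\iota}$, where $\epsilon$ is the very elemental $0/1$ lift of $E'$, so the integral vanishes unless $E'$ is very elemental. The only difference is cosmetic: you compute the fiber measure $q^{-(k+1)\iota(n-\iota)}$ on each shell $v(x_\iota)=k$ and sum over $k$, whereas the paper makes the change of variables $B'=\epsilon+\pi x_\iota B''$, whose Jacobian is $|\pi x_\iota|^{(n-\iota)\iota}$.
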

\begin{proof}
Let $B=\begin{pmatrix}
    \on{Id}_\iota&0\\ \beta'&\on{Id}_{n-\iota}
\end{pmatrix}\in \on{L}_I(\o)$. The condition $\pi x_\iota|B_{ij}(B_{ij}-1)\, \forall i,j$ with \mbox{$\iota +1\leq i\leq n$}, $\ 1\leq j\leq \iota$ translates as: each coefficient of $B'$ is congruent to $0$ or $1$ modulo $\pi x_\iota$. The condition $B_{ij}B_{i'j}\, \forall i,i',j$ with \mbox{$\iota+1\leq i<i'\leq n$,} $\ 1\leq j\leq \iota$ translates as: in each column of $B'$ there is at most one unit. Hence,  condition $\pi x_\iota|\mathscr{R}(\beta)$ is equivalent to: $B'=\epsilon+\pi x_\iota B''$ for some very elementary matrix $\epsilon$ and some $B''\in \on{Mat}_{(n-\iota)\times\iota}(\o)$. In particular, the integral is 0 if $\delta_E=0$. If $\delta_E=1$, then $\epsilon$ is completely determined, and we can apply the change of variables $B'=\epsilon+\pi x_\iota B''$. 
We obtain
\begin{align*}
    \int\limits_{\substack{\overline{\beta}=E\\ \pi x_\iota|\mathscr{R}(B)}}
    |x_\iota|^{s-1}|d x_\iota||dB|=\int |\pi x_\iota|^{(n-\iota)\iota}|x_{\iota}|^{s-1}|dx_\iota||dB''|=\frac{(1-q^{-1})q^{-(n-\iota)\iota}}{1-q^{-(s+(n-\iota)\iota)}}.
\end{align*}
\end{proof}

Using our description of $\mathscr{M}_I$, we compute the next integral by partitioning the domain of integration:
\begin{align*}
    \int\limits_{\substack{(x_0,x_\iota,B)\in \mathscr{M}_{I}\\\bar{B}=E}}|x_0|^{s_0n-1}|x_\iota|^{\iota(s_\iota-(n-\iota))-1}|d\x||dB|=\left(\int\limits_{\substack{\pi x_\iota|x_0\\\bar{B}=E}}+\int\limits_{\substack{x_0|x_\iota\\ \pi \frac{x_\iota}{x_0}|\mathscr{R}(B)\\\bar{B}=E}}\right)|x_0|^{s_0n-1}|x_\iota|^{\iota(s_\iota-(n-\iota))-1}|d\x||dB|
\end{align*}
After applying the change of variables $(x_0,x_\iota)=(\pi x_\iota' x_0',x_\iota')$ in the first integral and $(x_0,x_\iota)=(x_0',x_0'x_\iota')$ in the second one, and renaming $x_0'=x_0$ and $x_\iota'=x_\iota$, the integral becomes:
\begin{align*}
|\pi|^{s_0n}    \int\limits_{\substack{\bar{B}=E}}|x_0|^{s_0n-1}|x_\iota|^{s_0n+\iota(s_\iota-(n-\iota))-1}|d\x||dB|+\int\limits_{\substack{ \pi {x_\iota}|\mathscr{R}(B)\\\bar{B}=E}}|x_0|^{s_0n+\iota(s_\iota-(n-\iota))-1}|x_\iota|^{\iota(s_\iota-(n-\iota))-1}|d\x||dB|.
\end{align*}
The first integral is straightforward, while for the second one, we use Lemma \ref{lem: preliminary integral for I=1}. The result is
\begin{align*}
    \frac{(1-q^{-1})^2q^{-(n-\iota)\iota}}{1-q^{-(s_0n+\iota(s_\iota-(n-\iota)))}}\left(\frac{q^{-s_0n}}{1-q^{-s_0n}}+\delta_E\frac{1}{1-q^{-\iota s_\iota}} \right)
\end{align*}

Next, we have to multiply this result by $\tau_{I,E}$ and compute the sum over $\mc{R}_I(\o/\m)$. By (\ref{eq: sum of tau}), we have the formula
\begin{align*}
    \sum_{E\in \on{L}_I(\o/\m)}\tau_{I,E}=\binom{n}{\iota}_q.
\end{align*}
The next lemma gives us the other necessary sum:
\begin{lemma}\label{lem: formula for theta}
    Let $1\leq \iota\leq n-1$ and $I=\{\iota\}$. Then
\begin{align*}
    \sum_{E\in\on{L}_{I}(\o/\m)}\delta_E\,\tau_{I,E}=\sum_{\substack{0\leq t_1,\ldots,t_{n-\iota+1}\leq \iota\\ t_1+\cdots+t_{n-\iota+1}=\iota}} 1^{t_1}\cdot2^{t_2}\cdot\cdots\cdot (n-\iota+1)^{t_{n-\iota+1}},
\end{align*}  
which is the coefficient of $z^\iota$ in the expansion of $\prod_{i=1}^{n-\iota+1}\frac{1}{1-iz}$.
\end{lemma}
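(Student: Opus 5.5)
The plan is to make the sum completely explicit by counting pairs $(E,\sigma)$. Since $\delta_E=0$ unless $E'$ is very elemental, the left-hand side equals
\[
\#\{(E',\sigma):\ E' \text{ very elemental},\ \sigma\in\mathbb{S}_n,\ E\sigma\in\mathcal{R}_I(\o/\m)\}.
\]
So I will describe concretely when $E\sigma$ is $I$-reduced, for $E=\begin{pmatrix}\operatorname{Id}_\iota&0\\ E'&\operatorname{Id}_{n-\iota}\end{pmatrix}$ with $E'$ very elemental.

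\textbf{Step 1: the entries of $E\sigma$.} Row $j\le\iota$ of $E\sigma$ has a single nonzero entry, equal to $1$, in column $\sigma(j)$. Row $i>\iota$ has $1$'s exactly in column $\sigma(i)$ and in the columns $\sigma(j)$ for those $j\le\iota$ with $E'_{ij}=1$. All nonzero entries are $1$, so condition (1) of Definition~\ref{def:I-reduced} is automatic: the leading $1$ of a row is just its leftmost nonzero entry.

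\textbf{Step 2: translate the conditions of Definition~\ref{def:I-reduced}.}
\begin{itemize}
\item[(a)] Condition (3) says $\sigma(1)<\cdots<\sigma(\iota)$ and $\sigma(\iota+1)<\cdots<\sigma(n)$.
\item[(b)] For rows $j\le\iota$, the rows above are rows $j'<j$, which vanish in column $\sigma(j)$. So condition (2) imposes nothing on these rows.
\item[(c)] For a row $i>\iota$, suppose its leftmost $1$ were in some column $\sigma(j)$ with $j\le\iota$. Then row $j$, which lies above row $i$, has a $1$ there, violating condition (2). Hence the leading $1$ must be at $\sigma(i)$, which requires $\sigma(i)<\sigma(j)$ whenever $E'_{ij}=1$.
\item[(d)] Conversely, if the leading $1$ of row $i$ is at $\sigma(i)$, the entries above it vanish. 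Rows $1,\dots,\iota$ are zero in column $\sigma(i)$, since $\sigma(i)\ne\sigma(j)$. Rows $\iota+1,\dots,i-1$ are also zero there, because column $\sigma(i)$ of $E\sigma$ is column $i$ of $E$, whose only nonzero entry is in row $i$.
\end{itemize}
So $E\sigma$ is $I$-reduced exactly when (a) holds and, for every $j\le\iota$ whose column in $E'$ contains a $1$ in row $i$, we have $\sigma(i)<\sigma(j)$.

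\textbf{Step 3: combinatorial count.} A permutation $\sigma$ satisfying (a) is the same as a shuffle of $\iota$ letters $a_1,\dots,a_\iota$ with $n-\iota$ letters $b_{\iota+1},\dots,b_n$, each block kept in order. A very elemental $E'$ assigns to each column $j$ either nothing or a single row $i>\iota$. The constraint in Step 2 says $a_j$ may be assigned only to a $b_i$ placed before it in the shuffle.

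Hence, if $a_j$ is preceded by $k$ letters $b$, it has $k+1$ choices. For $m=1,\dots,n-\iota+1$, let $t_m$ be the number of $a$'s preceded by exactly $m-1$ letters $b$. A shuffle is determined uniquely by the tuple $(t_1,\dots,t_{n-\iota+1})$ with $\sum_m t_m=\iota$, because both blocks are internally ordered. It contributes $\prod_m m^{t_m}$ pairs, which gives the stated sum.

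Finally, $\prod_{i=1}^{n-\iota+1}(1-iz)^{-1}=\prod_i\sum_{t\ge0}i^tz^t$, and extracting the coefficient of $z^\iota$ gives the same sum.

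\textbf{Main obstacle.} The only delicate point is Step 2: checking carefully that condition (2) for the lower rows is equivalent to ``leading $1$ at $\sigma(i)$'', and that no other constraint arises. Here the very elemental hypothesis matters, since it guarantees that column $\sigma(j)$ contains at most two $1$'s.
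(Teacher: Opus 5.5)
Your proof is correct and takes essentially the same route as the paper. The paper also fixes the column positions of the leading $1$'s of rows $\iota+1,\dots,n$ (your shuffle). It then counts, for each column of an upper row, the $k+1$ ways to place no extra $1$ or one extra $1$ in one of the $k$ lower rows whose leading $1$ lies to its left. Your Step 2 spells out the verification against Definition~\ref{def:I-reduced}, which the paper leaves implicit.
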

\begin{proof}
     The sum to be computed is the number of $I$-reduced matrices over $\o/\m$ whose entries are only $0$ or $1$ and such that there is at most two non-zero entries in each column.
To count these matrices, we first choose the column positions $j_{\iota+1},\ldots,j_n$ for the leading 1's of rows $\iota+1,\ldots,n$. The leading 1's of the first $\iota$ rows are determined are then completely determined.

Next, we need to decide in which columns we are going to add an extra 1. Between columns $j_{\iota+1}$ and $j_{\iota+2}$ we can add extra 1's without restriction in the $(\iota+1)$-th row, so there are $2^{j_{\iota+2}-j_{\iota+1}-1}$ possible choices. Between columns $j_{\iota+2}$ and $j_{\iota+3}$, we can only add extra 1's in rows $\iota+1$ and $\iota+2$, so for each column between $j_{\iota+2}$ and $j_{\iota+3}$ we have 3 possibilities: to add no extra 1, to add an extra 1 in row $\iota+1$, or to add an extra 1 in row $\iota+2$. Hence, there are $3^{j_{\iota+3}-j_{\iota+2}-1}$ possible choices. Continuing the analysis in this way we find that 
\begin{align*}
    \sum_{E\in\on{L}_{I}(\o/\m)}\delta_E\,\tau_{I,E}&=\sum_{1\leq j_{\iota+1}<\cdots < j_n\leq n} 2^{j_{\iota+2}-j_{\iota+1}-1} 3^{j_{\iota+3}-j_{\iota+2}-1}\cdots (n-\iota)^{j_{n}-j_{n-1}-1}(n-\iota+1)^{n-j_{n}}\\
    &=\sum_{\substack{0\leq t_1,\ldots,t_{n-\iota+1}\leq \iota\\ t_1+\cdots+t_{n-\iota+1}=\iota}} 1^{t_1}\cdot2^{t_2}\cdot\cdots\cdot (n-\iota+1)^{t_{n-\iota+1}}.
\end{align*}
\end{proof}

Putting everything together, we obtain:
\begin{proposition}\label{prop: formula for ZI} Let $1\leq\iota\leq n-1$ and put $I=\{\iota\}$. Then 
    \begin{align*}
    \Xi_{n,I}(s_0,s_\iota)=\frac{q^{-\iota s_\iota}}{1-q^{-s_0n}q^{-\iota s_\iota}q^{\iota(n-\iota)}}\left(\binom{n}{\iota}_q\frac{q^{-s_0n}}{1-q^{-s_0n}}+\theta_{n,\iota}\frac{1}{1-q^{-\iota s_\iota}} \right)
\end{align*}
where $\theta_{n,\iota}$ is the coefficient of $z^\iota$ in the expansion of $\prod_{i=1}^{n-\iota+1}\frac{1}{1-iz}$.
\end{proposition}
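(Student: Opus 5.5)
The plan is to assemble the pieces computed just above. We substitute them into the integral representation of $\Xi_{n,I}(s_0,s_\iota)$ at the start of this section, which carries the prefactor $q^{-\iota(s_\iota-n+\iota)}/(1-q^{-1})^2$. For each fixed $E\in\on{L}_I(\o/\m)$, the partition of the domain into the pieces $\pi x_\iota\mid x_0$ and $x_0\mid x_\iota$ is disjoint and exhausts $\mathscr{M}_I$. Indeed, $\pi x_\iota\mid x_0\mathscr{R}(B)$ holds automatically when $\pi x_\iota\mid x_0$. When instead $x_0\mid x_\iota$, it reduces to $\pi\frac{x_\iota}{x_0}\mid\mathscr{R}(B)$. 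Together with the change of variables and Lemma~\ref{lem: preliminary integral for I=1}, this shows the integral over $\{\bar B=E\}$ equals
\[
\frac{(1-q^{-1})^2q^{-(n-\iota)\iota}}{1-q^{-(s_0n+\iota(s_\iota-(n-\iota)))}}\left(\frac{q^{-s_0n}}{1-q^{-s_0n}}+\frac{\delta_E}{1-q^{-\iota s_\iota}}\right).
\]
In the first piece, the integral over $\bar B=E$ contributes $\mu(\{\bar B=E\})=q^{-\iota(n-\iota)}$. The $x_0$- and $x_\iota$-integrals give $(1-q^{-1})/(1-q^{-s_0n})$ and $(1-q^{-1})/(1-q^{-(s_0n+\iota(s_\iota-(n-\iota)))})$, respectively. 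In the second piece, the lemma is applied with $s=\iota(s_\iota-(n-\iota))$, and one checks that $s+(n-\iota)\iota=\iota s_\iota$.

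Next we multiply by $\tau_{I,E}$ and sum over $E$. The first term carries no $E$-dependence, so \eqref{eq: sum of tau} gives the factor $\binom{n}{\iota}_q$. The second term gives $\sum_E\delta_E\tau_{I,E}$, which equals $\theta_{n,\iota}$ by Lemma~\ref{lem: formula for theta}. The factor $(1-q^{-1})^2$ cancels the denominator of the prefactor. The remaining powers combine as $q^{-\iota(s_\iota-n+\iota)}\cdot q^{-(n-\iota)\iota}=q^{-\iota s_\iota}$. The denominator is rewritten as $1-q^{-s_0n}q^{-\iota s_\iota}q^{\iota(n-\iota)}$. This yields exactly the stated formula.

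The main point needing care is the exponent bookkeeping under the two changes of variables. In the first piece, the substitution $x_0=\pi x_\iota x_0$ has Jacobian $|\pi x_\iota|$. This shifts the exponent of $x_\iota$ by $s_0n$ and produces the factor $|\pi|^{s_0n}=q^{-s_0n}$. In the second piece, $x_\iota=x_0x_\iota$ has Jacobian $|x_0|$, so the exponent of $x_0$ becomes $s_0n+\iota(s_\iota-(n-\iota))-1$.

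A further check is that the two pieces overlap only in a measure-zero set. They are in fact disjoint: if $x_0\mid x_\iota$ and $\pi x_\iota\mid x_0$, then $x_0=x_\iota=0$. We also need the lemma's normalization $\mu(\bar B=E)=q^{-\iota(n-\iota)}$ to be consistent with the Haar measure on $\on{L}_I(\o)\cong\o^{\iota(n-\iota)}$. Finally, the lemma's proof must be valid: the integrand depends only on $|x_\iota|$, and the substitution $B'=\epsilon+\pi x_\iota B''$ has Jacobian $|\pi x_\iota|^{(n-\iota)\iota}$.
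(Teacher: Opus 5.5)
Your proposal is correct and follows the paper's argument essentially step for step. It uses the same splitting of the domain according to $\pi x_\iota\mid x_0$ versus $x_0\mid x_\iota$, the same substitutions, Lemma~\ref{lem: preliminary integral for I=1} with $s=\iota(s_\iota-(n-\iota))$, and the two weighted sums $\sum_E\tau_{I,E}=\binom{n}{\iota}_q$ and $\sum_E\delta_E\tau_{I,E}=\theta_{n,\iota}$. The only quibble is wording: the two pieces are not literally disjoint, since they meet in the null set $\{x_0=x_\iota=0\}$, but a null overlap is all the argument needs.
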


\medskip

\subsection{Application I} We give a formula for
\begin{align*}
    \Xi_2(s)=\sum_{\Lambda\leq_s\o^2}[\o^2:\Lambda]^{-s}.
\end{align*}

Since the only non-empty $I\subseteq [2-1]$ is $I=\{1\}$,  (\ref{eq:decomposition_Xi_n}) gives
\begin{align*}
    \Xi_2(s)=\frac{1}{1-q^{-2s}}+\Xi_{2,\{1\}}(s,s).
\end{align*}
We now apply Proposition \ref{prop: formula for ZI}. Note that $\binom{2}{1}_q=1+q$ and the coefficient of $z$ in $\frac{1}{(1-z)(1-2z)}$ is $3$. Hence
\begin{align*}
    \Xi_2(s)&=\frac{1}{1-q^{-2s}}+\frac{q^{-s}}{1-q^{-3s+1}}\left((1+q) \frac{q^{-2s}}{1-q^{-2s}}+3\frac{1}{1-q^{-s}} \right)\\
    &=\frac{(1+q^{-s})^2}{(1-q^{-s})(1-q^{-3s+1})}.
\end{align*}

\medskip

In particular, if $\o=\mb{Z}_p$ we reobtain
$$\zeta_{\mb{Z}_p^3}^{\scriptscriptstyle{1,<}}(s)=\zeta_{\mb{Z}_p^2}^{\scriptscriptstyle <}(s)= \frac{(1+p^{-s})^2}{(1-p^{-s})(1-p^{-3s+1})}\quad\text{and}\quad   \zeta_{\mb{Z}^3}^{\scriptscriptstyle{1,<}}(s)=\frac{\zeta(s)^3\zeta(3s-1)}{\zeta(2s)^2}.$$

\subsection{Application II}\label{sec: Application II}
We can also apply Proposition \ref{prop: formula for ZI} to compute the first three non-zero coefficients of
\begin{align*}
    \sum_{\substack{\Lambda\leq_s \o^n\\ \Lambda\subseteq \pi \o^n}}[\o^n:\Lambda]^{-s}=b_nq ^{-ns}+b_{n+1} q^{-(n+1)s}+b_{n+2}q^{-(n+2)s}+\cdots.
\end{align*}

The coefficient $b_n$ is 1, as $\pi\o^n$ is the only submodule closed under multiplication of index $q^n$ included in $\pi\o^n$. 

To compute $b_{n+1}$, we note that a submodule $\Lambda\subset\o^n$ included in $\pi\o^n$ has index $q^{n+1}$ if and only if  \mbox{$\nu_0(\Lambda)=(\{1\}, 1,1)$}. Hence, $b_{n+1}=h_n(\{1\},1,1)$, which is the coefficient of $q^{-ns_0}q^{-s_1}$ in the series 
$\Xi_{\{1\}}(s_0,s_1)$. By Proposition \ref{prop: formula for ZI},  
$$b_n=\binom{n}{1}_q=\frac{1-q^n}{1-q}.$$

To compute $b_{n+2}$ we first assume $n\geq 3$.  We note that a submodule $\Lambda\subset\o^n$ included in $\pi\o^n$ has index $q^{n+2}$ if and only if  $\nu(\Lambda)=(\{1\}, 1,2)$ or $\nu(\Lambda)=(\{2\}, 1,1)$. Hence, $b_{n+2}=h_n(\{1\},1,2)+h_n(\{2\},1,1)$.
These terms are respectively the coefficients of $q^{-ns_0}q^{-2s_1}$ in the series 
$\Xi_{\{1\}}(s_0,s_1)$ and $\Xi_{\{2\}}(s_0,s_1)$. Hence, by Proposition \ref{prop: formula for ZI}, 
$$b_{n+2}=\theta_{n,1}q^{n-1}+\binom{n}{2}_q=\binom{n+1}{2}q^{n-1}+\binom{n}{2}_q.$$ 

When $n=2$, $b_{n+2}=1+h_n(\{1\},1,2)$, the first term corresponding to the submodule $\pi^2\o^2$. The above formula remains true as $\binom{n}{2}_q=1$ if $n=2$.

In particular, setting $\o=\mb{Z}_p$, we obtain
\begin{align*}
    g_{n+1}(p^n)=1,\quad g_{n+1}(p^{n+1})=\frac{1-q^n}{1-q},\quad g_{n+1}(p^{n+2})=\binom{n+1}{2}p^{n-1}+\binom{n}{2}_q.
\end{align*}

\begin{remark}
    Proposition \ref{prop: formula for ZI} can be used to prove that for any $n$, any $\iota\in [n-1]$ and any $(r_0,r)\in\mathbb{N}_0\times\mb{N}$, there exists a polynomial $P_{n,\iota,r_0,r}(X)$ such that $h_n(\{\iota\},r_0,r)=P_{n,\iota,r_0,r}(q)$ for any compact discrete valuation ring $\o$ with residue field of $q$ elements. More generally, we can ask if for any $n$, $I\subseteq [n-1]$ and $(r_0,\r)\in\mathbb{N}_0\times\mb{N}^I$, there exists a polynomial $P_{n,I,r_0,\r}(X)$ such that $h_n(I,r_0,\r)=P_{n,I,r_0,\r}(q)$ for any discrete valuation ring $\o$ of residue field of $q$ elements. Certainly, a positive answer of this question would imply that for each $n\geq 1$ and each $e\geq 0$, there exists a polynomial $P_{n,e}(X)$ such that for any compact discrete valuation ring $\o$ with residue field of $q$ elements, the coefficient of $q^{-es}$ in $\Xi_n(s)$ is $P_{n,e}(q)$.
\end{remark}

\subsection{Application III}\label{sec: Application III}
We compute a lower bound for the abscissa of convergence of $\Xi_n(s)$. By Proposition \ref{prop: formula for ZI}, for each $\iota\in [n-1]$, the abscissa of convergence of $\Xi_{n,\{\iota\}}(s,s)$ is $\frac{\iota(n-\iota)}{n+\iota}$.
Hence, $\Xi_n(s)$ has a pole at
\[ \max_{\iota\in [n-1]} \frac{\iota(n-\iota)}{\iota+n}.\]

One can readily check that this number is $\geq \frac{n}{6}$ (take $\iota=\frac{n}{2}$ if $n$ is even or $\iota=\frac{n-1}{2}$ if $n$ is odd). Moreover, one can show that
\begin{align*}
  \lim\limits_{n\to \infty} \max_{\iota\in [n-1]} \frac{\iota(n-\iota)}{\iota+n}=3-2\sqrt{2}.
\end{align*}

\begin{remark}
    With a similar argument, or following the one given in \cite[Theorem 4.1.3]{LubotzkySegal2003}, one can show that if $L$ is the module $\o^n$ with a $\o$-bilinear multiplication, then the generating series of the submodules of $L$ that are closed under multiplication has abscissa of convergence bounded below by $\max_{\iota\in [n-1]} \frac{\iota(n-\iota)}{\iota+n}$. 
\end{remark}

\section{The case $\# I=2$}\label{sec: the case I of two elements}
Let $n\geq 3$ and let $I=\{i_1,i_2\}$ with $1\leq i_1<i_2\leq n-1$.
We consider the integral
\begin{align*}
    \Xi_{n,I}(s_0,s_1,s_2)
    &=
    \frac{q^{-i_1(s_1-n+i_1)-i_2(s_2-n+i_2)}}{(1-q^{-1})^{3}}\\
    &\qquad\times 
    \sum_{E\in \operatorname{L}_I(\o/\m)} \tau_{I,E} 
    \int\limits_{\substack{(x_0,x_1,x_2,B)\in \mathscr{M}_{I}\\ \bar{B}=E}} |x_0|^{s_0 n-1}|x_1|^{i_1(s_1-n+i_1)-1}|x_2|^{i_2(s_2-n+i_2)-1}|dx_0dx_1dx_2||dB|
    \end{align*}

In \S\ref{sec: a first reduction}, we describe the domain of integration and
express the above integral as a finite sum of simpler integrals by partitioning
the domain.
These integrals are then computed explicitly in
\S\ref{sec: the case n=3 and I=1,2} in the special case $n=3$ and $I=\{1,2\}$. We finally use this calculation in \ref{sec: the cotype zeta function of o4} to write a formula for the cotype zeta function of $\o^4$.

\subsection{A first reduction}\label{sec: a first reduction}

We begin by describing the domain $\mathscr{M}_{I}$.
By definition, it consists of those $$(x_0,x_1,x_2,B)\in \o^3\times \operatorname{L}_I(\o)$$
satisfying the following divisibility conditions:
\begin{align*}
    x_1
    &\mid x_0 x_2\,\mathscr{R}_{ii'j}(B)
    && \text{for } 1\le j\le i_1<i\le i'\le i_2,\\
    \pi x_1
    &\mid x_0\,\mathscr{R}_{ii'j}(B)
    && \text{for } 1\le j\le i_1<i\le i_2<i'\le n,\\
    \pi^2 x_1x_2
    &\mid x_0\,\mathscr{R}_{ii'j}(B)
    && \text{for } 1\le j\le i_1,\ i_2<i\le i'\le n,\\
    \pi x_2
    &\mid x_0\,\mathscr{R}_{ii'j}(B)
    && \text{for } i_1<j\le i_2<i\le i'\le n.
\end{align*}

Writing
\[
B=
\begin{pmatrix}
\operatorname{Id}_{i_1} & 0 & 0\\
(a_{ij}) & \operatorname{Id}_{i_2-i_1} & 0\\
(b_{ij}) & (c_{ij}) & \operatorname{Id}_{n-i_2}
\end{pmatrix},
\]
one easily checks that the following identities of sets hold.

\begin{align*}
\{\mathscr{R}_{ii'j}(B)\mid 1\le j\le i_1<i\le i'\le i_2\}-\{0\}
&=
R_1(B), \\
\{\mathscr{R}_{ii'j}(B)\mid 1\le j\le i_1<i\le i_2<i'\le n\}-\{0\}
&=
R_2(B),\\
\{\mathscr{R}_{ii'j}(B)\mid 1\le j\le i_1,\ i_2<i\le i'\le n\}-\{0\}
&=
R_3(B),\\
\{\mathscr{R}_{ii'j}(B)\mid i_1<j\le i_2<i\le i'\le n\}-\{0\}
&=
R_4(B),
\end{align*}
where 
\begin{align*}
R_1(B)
:&=
\{a_{ij}a_{i'j}\mid 1\le j\le i_1<i<i'\le i_2\}
\cup
\{a_{ij}^2-a_{ij}\mid 1\le j\le i_1<i\le i_2\}\\    
R_2(B)
:&=
\{a_{ij}(b_{i'j}-c_{i'i})\mid 1\le j\le i_1<i\le i_2<i'\le n\},\\
R_3(B):&=
\{b_{ij}b_{i'j}-\sum_{k=i_1+1}^{i_2}a_{kj}c_{ik}c_{i'k}
\mid 1\le j\le i_1,\ i_2<i<i'\le n\}\\
&\quad\cup
\{b_{ij}^2-b_{ij}-\sum_{k=i_1+1}^{i_2}a_{kj}(c_{ik}^2-c_{ik})
\mid 1\le j\le i_1,\ i_2<i\le n\},\\
R_4(B)
:&=
\{c_{ij}c_{i'j}\mid i_1<j\le i_2<i<i'\le n\}
\cup
\{c_{ij}^2-c_{ij}\mid i_1<j\le i_2<i\le n\}.
\end{align*}

\medskip

For fixed $u_0,u_1,u_2\in\mathbb{C}$ and $E\in \operatorname{L}_I(\o/\m)$ and a subset $\mathcal{C}\subset \o^3\times \on{L}_I(\o)$, set
\[
\mathcal{I}_E(\mathcal{C};\,u_0,u_1,u_2)
:=\frac{1}{(1-q^{-1})^3}
\int\limits_{\substack{(x_0,x_1,x_2,B)\in \mathcal{C}\\ \bar B=E}}
|x_0|^{u_0-1}\,|x_1|^{u_1-1}\,|x_2|^{u_2-1}\,|dx_0\,dx_1\,dx_2|\,|dB|.
\]
Thus
\begin{align*}
    \Xi_{n,I}(s_0,s_1,s_2)=q^{-t_1-t_2}\sum_{E\in\on{L}_I(\o/\m)}\tau_{I,E}\cdot \mathcal{I}_E(\mathcal{C}_0;\, t_0,t_1,t_2)
\end{align*}
where 
$$(t_0,\, t_1,\, t_2)=\big(s_0n,\,i_1(s_1-n+i_1),\, i_2(s_2-n+i_2)\big)$$
and $\mathcal{C}_0$ is the set of $(x_0,x_1,x_2,B)\in \o^3\times \operatorname{L}_I(\o)$ such that
\begin{align*}
x_1&\mid x_0x_2\,R_1(B),&
\pi x_1&\mid x_0\,R_2(B),\\
\pi^2x_1x_2&\mid x_0\,R_3(B),&
\pi x_2&\mid x_0\,R_4(B).
\end{align*}
Here $u\mid v S$ means $u\mid vs$ for all $s\in S$.

We compute $\mathcal{I}_E(\mathcal{C}_0;\, t_0,t_1,t_2)$ by successive partitions according to the relative divisibility of
$x_0,x_1,x_2$. At each step we perform the corresponding change of
variables and update the exponents.

\medskip\noindent

Write 
$$\mathcal{C}_0=\mathcal{C}_{0,1}\sqcup \mathcal{C}_{0,2}\sqcup\mathcal{C}_{0,3}\sqcup \mathcal{C}_{0,4},$$
where $\mc{C}_{0,1}$, $\mathcal{C}_{0,2}$, $\mathcal{C}_{0,3}$, $\mathcal{C}_{0,4}$ are defined respectively by the extra conditions:
\begin{itemize}
    \item $\pi^2 x_1x_2\mid x_0$, 
       \item $\pi x_1|x_0$ and $\frac{x_0}{\pi x_1}\mid x_2$,
    \item $x_0\mid x_1$ and $\pi x_2\mid x_0$, 
    \item $x_0\mid x_1$ and $x_0\mid x_2$.
\end{itemize}

On $\mathcal{C}_{0,1}$ we substitute $x_0=\pi^2 x_1x_2x_0'$ and obtain
\begin{align*}
    \mathcal{I}_E(\mathcal{C}_{0,1};t_0,t_1,t_2)=|\pi|^{2t_0}\underbrace{\mathcal{I}_E\big(\o^3\times \on{L}_I(\o); t_0,t_0+t_1,t_0+t_2\big)}_{=:W_{1,E}}
\end{align*}

On $\mathcal{C}_{0,2}$ we substitute $x_0=\pi x_1x_0'$ and $x_2=x_0'x_2'$, and obtain
\begin{align*}
    \mathcal{I}_E(\mathcal{C}_{0,2};t_0,t_1,t_2)=|\pi|^{t_0}\mathcal{I}_E(\mathcal{C}_2; t_0+t_2,t_0+t_1,t_2)
\end{align*}
where $\mathcal{C}_2$ is defined by the conditions
\begin{align*}
    \pi x_2\mid R_3(B),&&  x_2\mid x_1\,R_4(B).
\end{align*}
We simplify this integral one step further. Write $$\mathcal{C}_2=\mathcal{C}_{2,1}\sqcup \mathcal{C}_{2,2},$$ where $\mathcal{C}_{2,1}$ and $\mathcal{C}_{2,2}$ are defined respectively by the extra conditions
\begin{itemize}
    \item $x_2|x_1$, and
    \item $\pi x_1|x_2$.
\end{itemize}
On $\mathcal{C}_{2,1}$ we substitute $x_1=x_2x_1'$, and obtain
\begin{align*}
\mathcal{I}_E(\mathcal{C}_{2,1}\,; t_0+t_2,t_0+t_1,t_2)=    \underbrace{\mathcal{I}_E\big( \pi x_2\mid R_3(B)\, ; t_0+t_2,t_0+t_1, t_0+t_1+t_2\big)}_{=: W_{2,E}}
\end{align*}
On $\mathcal{C}_{2,2}$ we substitute $x_2=\pi x_1x_2'$, and obtain
\begin{align*}
    \mathcal{I}_E(\mathcal{C}_{2,2}\,; t_0+t_2,t_0+t_1,t_2)=    |\pi|^{t_2}\underbrace{\mathcal{I}_E\left(\begin{array}{c}
        \pi^2 x_1x_2\mid R_3(B)\\ \pi x_2\mid R_4(B)
    \end{array}\, ; t_0+t_2,t_0+t_1+t_2,t_2 \right)}_{=:W_{3,E}}
\end{align*}

On $\mathcal{C}_{0,3}$ we substitute $x_0=\pi x_2x_0'$ and $x_1=\pi x_2x_0'x_1' $ and obtain
\begin{align*}
    \mathcal{I}_E(\mathcal{C}_{0,3}\, ; t_0,t_1,t_2)=|\pi|^{t_0+t_1}\mathcal{I}_E(\mathcal{C}_3\,; t_0+t_1,\, t_1,\, t_0+t_1+t_2)
\end{align*}
where $\mathcal{C}_3$ is defined by the conditions
\begin{align*}
    x_1\mid x_2\, R_1(B),&& \pi x_1\mid R_2(B),&& \pi^2x_1x_2\mid R_3(B).
\end{align*}
We simplify this integral one step further. Write $$\mathcal{C}_3=\mathcal{C}_{3,1}\sqcup\mathcal{C}_{3,2},$$
where $\mathcal{C}_{3,1}$ and $\mathcal{C}_{3,2}$ are defined respectively by the following conditions:
\begin{itemize}
    \item $x_1\mid x_2$,
    \item $\pi x_2\mid x_1$.
\end{itemize}
On $\mathcal{C}_{3,1}$ we substitute $x_2=x_1x_2'$ and obtain
\begin{align*}
    \mathcal{I}_E(\mc{C}_{3,1}\, ; t_0+t_1,\,t_1,\, t_0+t_1+t_2)=\underbrace{\mathcal{I}_E\left(\begin{array}{c}
      \pi x_1\mid R_2(B)  \\
         \pi^2x_1^2x_2\mid R_3(B)
    \end{array}\,;\, t_0+t_1,\,t_0+2t_1+t_2,\, t_0+t_1+t_2\right)}_{=:W_{4,E}}
\end{align*}
On $\mathcal{C}_{3,2}$ we substitute $x_1=\pi x_2x_1'$ and obtain
\begin{align*}
     \mathcal{I}_E(\mc{C}_{3,2}\, ; t_0+t_1,\,t_1,\, t_0+t_1+t_2)=|\pi|^{t_1}\underbrace{\mathcal{I}_E\left(\begin{array}{c} \pi x_1\mid R_1(B)\\
      \pi^2 x_1x_2\mid R_2(B)  \\
         \pi^3x_1x_2^2\mid R_3(B)
    \end{array}\,;\, t_0+t_1,\,t_1,\, t_0+2t_1+t_2\right)}_{=:W_{5,E}}
\end{align*}

On $\mathcal{C}_{0,4}$ we substitute $x_1=x_0x_1'$ and $x_2=x_0x_2'$ and obtain
\begin{align*}
    \mathcal{I}_E(\mathcal{C}_{0,4}\, ;\, t_0,t_1,t_2)&=\mathcal{I}_E(\mathcal{C}_4\,;\, t_0+t_1+t_2,\, t_1,\, t_2)
\end{align*}
where $\mathcal{C}_4$ is defined by the conditions
\begin{align*}
    x_1\mid x_0x_2 R_1(B),&& \pi x_1\mid R_2(B),&&\pi^2 x_0x_1x_2\mid R_3(B),&& \pi x_2\mid R_4(B).
\end{align*}
We simplify this integral one step further. Write
\begin{align*}
    \mathcal{C}_4=\mathcal{C}_{4,1}\sqcup \mathcal{C}_{4,2}\sqcup\mathcal{C}_{4,3},
\end{align*}
where $\mathcal{C}_{4,1}$, $\mathcal{C}_{4,2}$, $\mathcal{C}_{4,3}$ are defined respectively by the following conditions
\begin{itemize}
    \item $x_1\mid x_2$,
    \item $\pi x_2\mid x_1$ and $\frac{x_1}{x_2}\mid x_0$,
    \item $\pi x_0x_2\mid x_1$.
\end{itemize}
On $\mc{C}_{4,1}$ we substitute $x_2=x_1x_2'$ and obtain
\begin{align*}
    \mathcal{I}_E(\mathcal{C}_{4,1},\,;\,t_0+t_1+t_2,\, t_1,\, t_2)=\underbrace{\mathcal{I}_E\left( \begin{array}{c}
        \pi x_1\mid R_2(B)  \\
         \pi^2x_0x_1^2x_2\mid R_3(B)\\ \pi x_1x_2\mid R_4(B)
    \end{array}\,;\, t_0+t_1+t_2,\, t_1+t_2,\, t_2\right)}_{=: W_{6,E}}.
\end{align*}
On $\mc{C}_{4,2}$ we substitute $x_1=\pi x_2x_1'$, $x_0=\pi x_1'x_0'$ and obtain
\begin{align*}
    \mathcal{I}_E(\mathcal{C}_{4,2},\,;\,t_0+t_1+t_2,\, t_1,\, t_2)=|\pi|^{t_0+2t_1+t_2}\underbrace{\mathcal{I}_E\left( \begin{array}{c}
        \pi^2 x_1x_2\mid R_2(B)  \\
         \pi^4x_0x_1^2x_2^2\mid R_3(B)\\ \pi x_2\mid R_4(B)
    \end{array}\,;\, t_0+t_1+t_2,\, t_0+2t_1+t_2,\, t_1+t_2\right)}_{=: W_{7,E}}.
\end{align*}
On $\mc{C}_{4,3}$ we substitute $x_1=\pi x_0x_2x_1'$ and obtain
\begin{align*}
    \mc{I}_E(\mc{C}_{4,3}\,;\, t_0+t_1+t_2,\, t_1,\,t_2)=|\pi|^{t_1}\underbrace{\mathcal{I}_E\left( \begin{array}{c}
         \pi x_1\mid R_1(B) \\
         \pi^2 x_0x_1x_2\mid R_2(B)\\ 
         \pi^3 x_0^2x_1x_2^2\mid R_3(B)\\
         \pi x_2\mid R_4(B)
    \end{array}\,;\, t_0+2t_1+t_2,\,t_1,\, t_1+t_2\right)}_{=:W_{8,E}}.
\end{align*}

Setting 
\begin{align*}
    W_i:=\sum_{E\in \on{L}_I(\o/\m)}\tau_{I,E} W_{i,E},
\end{align*}
and collecting the above identities, we obtain
\begin{align*}
    q^{t_1+t_2}\Xi_{I,\{i_1,i_2\}}(s_0,s_1,s_2)&=|\pi|^{2t_0} W_1+|\pi|^{t_0}W_{2}+|\pi|^{t_0+t_2}W_{3}+|\pi|^{t_0+t_1}W_4\\
    &\quad+|\pi|^{t_0+2t_1}W_5+W_{6}+|\pi|^{t_0+2t_1+t_2}W_7+|\pi|^{t_1}W_8.
\end{align*}
\Big(Recall that $(t_0,t_1,t_2)=\big(s_0n,\, i_1(s_1-n+i_1),\, i_2(s_2-n+i_2)\big)$.\Big)

Note that
\begin{align}
    W_1&=\sum_{E\in \on{L}_I(\o/\m)}\tau_{I,E}\int\limits_{\o^3}|x_0|^{t_0-1}|x_1|^{t_0+t_1-1}|x_2|^{t_0+t_2-1}|dx_0dx_1dx_2|\cdot \int\limits_{\bar{B}=E}|dB|\nonumber \\
    &=\binom{n}{i_2}_q\binom{i_2}{i_1}_q\frac{q^{-(i_1(i_2-i_1)+i_2(n-i_2))}}{(1-q^{-t_0})(1-q^{-(t_0+t_1)})(1-q^{-(t_0+t_2)})}.\label{eq: formula for W1}
\end{align}
Our goal in the next section will be to compute $W_2,\ldots,W_8$ when $n=3$ and $I=\{1,2\}$, and obtain $\Xi_{3,\{1,2\}}(s_0,s_1,s_2)$.

\medskip

\subsection{The case $n=3$ and $I=\{1,2\}$}\label{sec: the case n=3 and I=1,2}

In this section we assume that $n=3$ and $I=\{1,2\}$.
We are going to compute $W_i=W_i(t_0,t_1,t_2)$ explicitly for $i=1,\ldots,8$, and then we will express $\Xi_{3,\{1,2\}}(s_0,s_1,s_2)$ as a rational function in 
$$q,\qquad X=q^{-t_0},\qquad Y=q^{-t_1},\qquad Z=q^{-t_2},$$
where $(t_0,t_1,t_2)=(3s_0, s_1-2, 2s_2-2)$.

The computation of $W_1$ is a particular case of (\ref{eq: formula for W1}), so we focus on the computation of $W_i$ for $i=2,\ldots,8$. To begin with, we fix
\begin{align*}
    E=\begin{pmatrix}
        1&0&0\\ \alpha&1&0\\ \beta&\gamma&1
    \end{pmatrix}\in \on{L}_I(\o/\m)
\end{align*}
and consider the computation of the integrals $W_{i,E}$ defined in the previous section.

It is easy to check that for a matrix
\begin{align*}
    B=\begin{pmatrix}
        1&0&0\\ a&1&0\\ b&c&1
    \end{pmatrix}
\end{align*}
we have
\begin{align*}
    R_1(B)=\{a^2-a\},\qquad R_2(B)=\{a(b-c)\},\qquad R_3(B)=\{b^2-b-a(c^2-c)\},\qquad R_4(B)=\{c^2-c\}.
\end{align*}
Thus, for the computation of $W_{i,E}$ for all $i=2,\ldots,8$ we need to consider the computation of the integral
\begin{align*}
S_{\alpha,\beta,\gamma}(u_1,u_2,u_3,u_4)
:=
\int\limits_{\substack{\Bar{a}=\alpha,\ \Bar{b}=\beta,\ \Bar{c}=\gamma\\
u_1\mid a^2-a\\
u_2\mid a(b-c)\\
u_3\mid b^2-b-a(c^2-c)\\
u_4\mid c^2-c}}
|d(a,b,c)|
\end{align*}
for certain expressions $u_1,u_2,u_3,u_4\in \o$ on the variables $x_0,x_1,x_2$. To compute this integral we do some preparation.

For $\epsilon\in \o/\m$ we define
\[
\chi_0(\epsilon)=
\begin{cases}
1,& \text{if } \epsilon=0,\\
0,& \text{if } \epsilon\neq 0,
\end{cases}
\qquad\text{and}\qquad
\chi^{\times}(\epsilon)=1-\chi_0(\epsilon).
\]
In other words, $\chi_0$ is the characteristic function of the zero element of $\o/\m$ and $\chi^{\times}$ is that of the units.
We extend the definition of $\chi_0$ to more variables by setting
\[
\chi_0(\epsilon_1,\ldots,\epsilon_r)=\chi_0(\epsilon_1)\cdots\chi_0(\epsilon_r).
\]

We record two important observations.
\begin{enumerate}
\item For any $c\in \o$,
$$c^2-c=\big(c-\chi_0(\Bar{c})\big)\big(c-\chi^{\times}(\Bar{c})\big).$$
Moreover, $c-\chi_0(\Bar{c})$ is always a unit. Thus, given $u\in \o$, 
\begin{align*}
    u\mid c^2-c\Longleftrightarrow u\mid c-\chi^{\times}(\bar{c}).
\end{align*}
\item Setting
\[
\alpha_1:=\alpha\frac{\gamma-\chi_0(\gamma)}{\beta-\chi_0(\beta)},\qquad
\beta_1:=\beta-\chi^{\times}(\beta),\qquad
\gamma_1:=\gamma-\chi^{\times}(\gamma),
\]
the map
\[
(a,b,c)\longmapsto
\left(a\frac{c-\chi_0(\gamma)}{b-\chi_0(\beta)},\ b-\chi^{\times}(\beta),\ c-\chi^{\times}(\gamma)\right)
\]
is a bijection
\[
\{(a,b,c)\in \o^3:\ \Bar{a}=\alpha,\ \Bar{b}=\beta,\ \Bar{c}=\gamma\}
\cong
\{(a_1,b_1,c_1)\in \o^3:\ \Bar{a_1}=\alpha_1,\ \Bar{b_1}=\beta_1,\ \Bar{c_1}=\gamma_1\}.
\]
Moreover, it is a diffeomorphism whose Jacobian has determinant
$\frac{c-\chi_0(\gamma)}{b-\chi_0(\beta)}$, which is a unit.
\end{enumerate}

\begin{lemma}\label{lem: computation of S}
Let $u_1,u_2,u_3,u_4\in \o$ satisfy
\[
\pi\mid u_3,\qquad u_2u_4\mid u_3.
\]
Assume in addition that if $\pi\mid u_1$ then $u_1u_4\mid u_2$.
Then for any $\alpha,\beta,\gamma\in \o/\m$, the integral
\[
S=S_{\alpha,\beta,\gamma}(u_1,u_2,u_3,u_4)
\]
is given by
\[
\begin{cases}
\chi_0\big(\beta^2-\beta-\alpha(\gamma^2-\gamma)\big)|u_3|\,|\pi|^{\,2}
& \text{if $u_1, u_2,u_4$ are units},\\[27pt]
\chi_0(\beta^2-\beta,\gamma^2-\gamma)|u_3u_4|\,|\pi|
& \text{if $u_1,u_2$ are units and $\pi|u_4$},\\[27pt]
\big(\chi_0(\alpha,\beta,\gamma-1)+\chi_0(\alpha,\beta-1,\gamma)\big)|u_2u_3u_4|+\chi_0(\beta-\gamma,\gamma^2-\gamma)|\pi u_3 u_4|
& \text{if $u_1$ is a unit and $\pi\mid u_2\mid u_4$},\\[27pt]
\begin{aligned}
&\left(\chi_0(\alpha,\beta^2-\beta)+\chi_0(\alpha-1,\beta-\gamma)+\chi^{\times}(\alpha^2-\alpha)\chi_0(\beta-\gamma,\gamma^2-\gamma)\right)|\pi u_2u_3|\\
&\quad+\chi_0(\alpha^2-\alpha,\beta-\gamma,\gamma^2-\gamma)\,|u_2u_3|
\int_{\pi^{\,2}a\,\mid u_2}\frac{1}{|a|}\,|da|
\end{aligned}
&\text{if $u_1, u_4$ are units and $\pi|u_2$,}\\[27pt]
\begin{aligned}
&\left(\chi_0(\alpha,\beta^2-\beta)+\chi_0(\alpha-1,\beta-\gamma)\right)|\pi u_2u_3|\\
&\quad+\chi_0(\alpha^2-\alpha,\beta-\gamma,\gamma^2-\gamma)\,|u_2u_3|
\int_{\pi a\,u_1\mid u_2}\frac{1}{|a|}\,|da|
\end{aligned}
&\text{if $u_4$ is a unit and $\pi|u_1|u_2$,}\\[27pt]
\begin{aligned}
&\big(\chi_0(\alpha,\beta,\gamma-1)+\chi_0(\alpha,\beta-1,\gamma)\big)|u_2u_3u_4|+\chi_0(\alpha^2-\alpha,\beta-\gamma,\gamma^2-\gamma)|u_2 u_3|\\
&
\quad+\chi^{\times}(\alpha^2-\alpha)\,\chi_0(\beta-\gamma,\gamma^2-\gamma)\,|\pi|\,|u_2u_3|\\
&\quad
+\chi_0(\alpha^2-\alpha,\beta-\gamma,\gamma^2-\gamma)\,|u_2u_3|
\int_{\pi^{\,2}a\,u_4\mid u_2}\frac{1}{|a|}\,|da|
\end{aligned}
& \text{if $u_1$ is a unit and $\pi^2\mid \pi u_4\mid u_2$,}\\[27pt]
\begin{aligned}
&\big(\chi_0(\alpha,\beta,\gamma-1)+\chi_0(\alpha,\beta-1,\gamma)\big)|u_2u_3u_4|+\chi_0(\alpha^2-\alpha,\beta-\gamma,\gamma^2-\gamma)|u_2 u_3|\\
&\quad
+\chi_0(\alpha^2-\alpha,\beta-\gamma,\gamma^2-\gamma)\,|u_2u_3|
\int_{\pi a\,u_1u_4\mid u_2}\frac{1}{|a|}\,|da|
\end{aligned}
& \text{if $\pi|u_1,\,u_4$ and $u_1 u_4\mid u_2$}.
\end{cases}
\]
\end{lemma}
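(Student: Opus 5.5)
The plan is to compute $S_{\alpha,\beta,\gamma}(u_1,u_2,u_3,u_4)$ directly as a Haar measure on the residue box $\{\bar a=\alpha,\bar b=\beta,\bar c=\gamma\}$, which has measure $q^{-3}=|\pi|^3$. We split according to the residue classes of $a,b,c$ modulo $\pi$, that is, according to the characteristic functions $\chi_0,\chi^\times$ appearing in the statement. We then linearize each divisibility condition using the two observations recorded before the lemma. The basic tool is observation (1): for any $u\in\o$ we have $u\mid c^2-c$ if and only if $u\mid c-\chi^\times(\gamma)$, and similarly $u_1\mid a^2-a$ if and only if $u_1\mid a-\chi^\times(\alpha)$.

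Next we treat the mixed condition $u_3\mid b^2-b-a(c^2-c)$. The hypothesis $u_2u_4\mid u_3$ is exactly what lets us handle it. Since $u_4\mid c^2-c$, we can write $c^2-c=u_4c'$. If $u_2$ is a unit, then $b$ is essentially the free variable and $c$ is constrained only by $u_4$. In that case the $u_3$-condition fixes $b$ modulo $u_3$ once $a,c$ are chosen, provided $b^2-b-a(c^2-c)$ has simple root behaviour in $b$. This is where observation (2) enters: the unit-Jacobian change of variables $(a,b,c)\mapsto(a_1,b_1,c_1)$ converts the quadratic condition in $b$ into a Hensel-type linear condition. Each condition $u\mid$(linear form) then cuts measure by $|u|$ relative to the box, provided the residue data are compatible, and the compatibility is recorded by the $\chi_0$ factor. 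This produces the first two cases: $|u_3||\pi|^2$ when only $u_3$ is non-unit, and $|u_3u_4||\pi|$ when $\gamma$ and $\beta$ must both be roots of $x^2-x$.

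When $\pi\mid u_2$, the condition $u_2\mid a(b-c)$ forces either $\pi\mid a$ or $b\equiv c$. We split as follows.

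\emph{(i) $a$ a unit.} Then $u_2\mid b-c$, and together with the $u_4$-condition on $c$, the quadratic $b^2-b-a(c^2-c)$ becomes $(b-c)(b+c-1)+(1-a)(c^2-c)$. When $\alpha\neq 0,1$ this contributes the $\chi^\times(\alpha^2-\alpha)$ terms.

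\emph{(ii) $a\equiv 0$ or $a\equiv 1$.} Here $a$ is constrained by $u_1$. If $u_1$ is a unit (cases 3, 4, 6), we stratify $a$ by valuation, $|a|=q^{-k}$. The condition then becomes $u_2/a\mid b-c$, and the sum over $k$ of $|u_2|/|a|$ times the measure of the $a$-shell gives the integral $\int_{\pi^2a\mid u_2}|a|^{-1}|da|$ (or its variant with $u_1$ or $u_4$ inserted), reflecting how far $a$ may go before the divisibility becomes automatic. Within each stratum the $u_3$-condition again fixes $b$ modulo $u_3/(\text{what is already imposed})$. The hypotheses $u_2u_4\mid u_3$ and $u_1u_4\mid u_2$ ensure the already-imposed divisibilities divide $u_3$, so the total factor is uniformly $|u_2u_3|$ or $|u_2u_3u_4|$. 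The boundary strata, where $a$ is exactly divisible enough that $b-c$ is free, give the isolated terms $\chi_0(\alpha,\beta,\gamma-1)+\chi_0(\alpha,\beta-1,\gamma)$, i.e. $b,c$ the two distinct roots.

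The main obstacle is the bookkeeping of the $u_3$-condition across these strata: one must check in each case that the Jacobian of solving for $b$ is a unit. This requires $2b-1$ to be a unit, or, after the substitution $b=c+\delta$, that the coefficient of $\delta$ is a unit. One must also check that the residue compatibility is exactly the displayed $\chi_0$ conditions. I would first handle the generic case $u_1,u_2,u_4$ units via Hensel's lemma in $b$. I would then do the case $\pi\mid u_2$, $u_4$ a unit, and treat the remaining cases as perturbations that add the $u_4$ (respectively $u_1$) factor. The last case is the same as the sixth but with $a\equiv$ root of $x^2-x$ imposed modulo $u_1$, which replaces $\pi^2$ by $\pi u_1$ in the range of integration.
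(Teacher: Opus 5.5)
Your proposal has a genuine gap in the step that produces the integral terms: the interaction between the $u_2$- and $u_3$-conditions. You stratify by $v(a)$ and read the $u_2$-condition as a constraint $u_2/a\mid b-c$ on $b$. You then let the $u_3$-condition fix $b$ ``modulo what is already imposed''. But $u_2\mid u_3$. For $\beta=\gamma\in\{0,1\}$, the $u_3$-condition already pins $b$ to a single coset of $u_3\o$ once $(a,c)$ is fixed. So the $u_2$-condition is not an extra constraint on $b$; it is a constraint on $(a,c)$.

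Your own identity $b^2-b-a(c^2-c)=(b-c)(b+c-1)+(1-a)(c^2-c)$, with $b+c-1$ a unit, shows what that constraint is. The two conditions together say: $b$ lies in one coset of $u_3\o$, and $u_2\mid a(a-1)(c-\gamma)$. Two consequences follow.
\begin{itemize}
\item The weight $1/|a|$ in the integral terms is a measure in $c$, not in $b$.
\item The stratification must be by $v(a-\chi^{\times}(\alpha))$, not by $v(a)$. For $\alpha=1$ your stratification has a single stratum, so it cannot produce the $\alpha=1$ part of the terms carrying $\chi_0(\alpha^2-\alpha,\beta-\gamma,\gamma^2-\gamma)$.
\end{itemize}

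A concrete check in the fourth case: take $u_1=u_4=1$, $u_2=u_3=\pi^3$ and $(\alpha,\beta,\gamma)=(1,0,0)$. Here $a$ is a unit, and the conditions become $\pi^3\mid b-c$ and $\pi^3\mid (1-a)c$. Hence $S=q^{-3}\cdot q^{-2}(2q^{-1}-q^{-2})=2q^{-6}-q^{-7}$. This matches $|\pi u_2u_3|+|u_2u_3|\int_{\pi^2a\mid u_2}|a|^{-1}|da|=q^{-7}+2q^{-6}(1-q^{-1})$ only because of the integral term, and that term comes entirely from the valuation of $a-1$.

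The paper handles this by applying observation (2) to the whole domain. Since $b^2-b-a(c^2-c)=(b-\chi_0(\beta))(b_1-a_1c_1)$, the $u_3$-condition becomes $u_3\mid b_1-a_1c_1$. This is linear with coefficient $1$ in $b_1$, for every residue triple. The paper then adds $u_4\mid b_1$, rescales $b_1,c_1$ by $u_4$, and integrates $b$ out. Using $u_2\mid u_3$, the $u_2$-condition becomes $u_2\mid a\bigl((a-1)u_4c+\chi^{\times}(\beta)-\chi^{\times}(\gamma)\bigr)$, a condition on $(a,c)$ alone. The region $u_2\mid a$ gives the terms with $\chi_0(\alpha,\dots)$. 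On $\pi a\mid u_2$ one is forced to $\beta=\gamma$ and the condition $u_2\mid (a-\chi^{\times}(\alpha))u_4c$. Integrating $c$ over $\frac{u_2}{(a-\chi^{\times}(\alpha))u_4}\mid c$ then yields the integral terms.

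This also resolves your ``main obstacle''. No Hensel argument in $b$ is needed. In fact you cannot check case by case that $2b-1$ is a unit: for odd $q$ the residue $\beta=1/2$ occurs with nonzero contribution. It does so in the first case, and in the fourth case with $\alpha=1$, $\beta=\gamma=1/2$. There you must solve for $b_1$ via (2), or for $a$, rather than for $b$.
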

\begin{proof}
Since $\pi|u_3$, the integral will be $0$ unless
\[
\beta^2-\beta-\alpha(\gamma^2-\gamma)=0,
\]
or equivalently, $\beta_1-\alpha_1\gamma_1=0$. So we will assume this condition.

\medskip

We claim that in each of the cases of the lemma, the correspondence described in (b) above induces a bijection
\begin{align*}
\left\{(a,b,c)\in \o^3\, :\, \begin{matrix}
\bar a=\alpha,\ \bar b=\beta,\ \bar c=\gamma\\
u_1\mid a^2-a\\
u_2\mid a(b-c)\\
u_3\mid b^2-b-a(c^2-c)\\
u_4\mid c^2-c
\end{matrix}\right\}
\cong
\left\{(a_1,b_1,c_1)\in \o^3\, :\, \begin{matrix}
\bar a_1=\alpha_1,\ \bar c_1=\gamma_1\\
u_1\mid a_1-\chi^{\times}(\alpha)\\
u_2\mid a_1\bigl(b_1-c_1+\chi^{\times}(\beta)-\chi^{\times}(\gamma)\bigr)\\
u_3\mid b_1-a_1c_1\\
u_4\mid c_1
\end{matrix}\right\}.
\end{align*}

To prove this claim, it suffices to explain why the condition $\bar b_1=\beta_1$ is omitted on the
right-hand side and why $u_1\mid a^2-a$, which is equivalent to $a-\chi^\times(\alpha)$, translates to $u_1\mid a_1-\chi^{\times}(\alpha)$.

The condition $\bar b_1=\beta_1$ follows from $\beta_1=\alpha_1\gamma_1$, $\pi\mid u_3\mid b_1-a_1c_1$,
and $(\bar a_1,\bar c_1)=(\alpha_1,\gamma_1)$.

As for the translation of $u_1\mid a-\chi^\times(\alpha)$, we only need to consider those cases where $\pi\mid u_1$. Note that in those cases we also have $u_1\mid u_2$, and in particular $\pi\mid u_2$. When $\alpha=0$ then $\chi^\times(\alpha)=0$ and $u_1\mid a$ is equivalent to $u_1\mid a_1$. If $\alpha\neq 0$, then looking at the conditions in both sets, we find that they are empty unless $\beta=\gamma$. Suppose that this is the case. Then  the conditions
$u_1\mid a-1$ and $u_2\mid a(b-c)$ translate to
\[
u_1\mid a_1\frac{b_1+\chi^{\times}(\beta)-\chi_0(\beta)}{c_1+\chi^{\times}(\gamma)-\chi_0(\gamma)}-\chi^\times(\alpha)
\qquad\text{and}\qquad
u_2\mid b_1-c_1.
\]
Since $u_1\mid u_2$ and $\beta=\gamma$, we can use the second condition to reduce the first condition to $u_1\mid a_1-\chi^\times(\alpha)$.
This completes the proof of the claim.

\medskip

We can now apply a change of variables to compute $S$.
Note that we may add the condition $u_4\mid b_1$ on the right-hand side since it follows from the last
two conditions and the hypothesis $u_4\mid u_3$. Hence (always assuming $\beta^2-\beta-\alpha(\gamma^2-\gamma)=0$, otherwise the integral is zero),
\begin{align*}
S
&=\int\limits_{\substack{\bar a=\alpha_1,\ \bar c=\gamma_1\\
u_1\mid a-\chi^{\times}(\alpha)\\
u_2\mid a(b-c+\chi^{\times}(\beta)-\chi^{\times}(\gamma))\\
u_3\mid b-ac\\
u_4\mid c,\ u_4\mid b}}
|d(a,b,c)|=\int\limits_{\substack{\bar a=\alpha_1,\ \overline{u_4c}=\gamma_1\\
u_1\mid a-\chi^{\times}(\alpha)\\
u_2\mid a(u_4b-u_4c+\chi^{\times}(\beta)-\chi^{\times}(\gamma))\\
u_3\mid u_4b-au_4c}}
|u_4|^2\,|d(a,b,c)|\\
&=\int\limits_{\substack{\bar a=\alpha_1,\ \overline{u_4c}=\gamma_1\\
u_1\mid a-\chi^{\times}(\alpha)\\
u_2\mid a((a-1)u_4c+\chi^{\times}(\beta)-\chi^{\times}(\gamma))\\
\frac{u_3}{u_4}\mid b-ac}}
|u_4|^2\,|d(a,b,c)|=\int\limits_{\substack{\bar a=\alpha_1,\ \overline{u_4c}=\gamma_1\\
u_1\mid a-\chi^{\times}(\alpha)\\
u_2\mid a((a-1)u_4c+\chi^{\times}(\beta)-\chi^{\times}(\gamma))}}
|u_4|\,|u_3|\,|d(a,c)|.
\end{align*}

\begin{itemize}
    \item Assume that $u_2$ is a unit.  Then $u_1$ is a unit and the result is
\begin{align*}
    S=\int\limits_{\bar{a}=\alpha_1,\, \bar{u_4c}=\gamma_1}|u_3u_4||d(a,c)|=|\pi u_3u_4|\mu(\{c\,:\, \bar{u_4c}=\gamma_1\})=\begin{cases}
        |\pi|^2|u_3u_4|&\text{if $u_4$ is a unit,}\\ \chi_0(\gamma^2-\gamma)|\pi||u_3u_4|&\text{if $\pi\mid u_4$.}
    \end{cases}
\end{align*}

\item Assume that $\pi\mid u_2$. Then
\begin{align*}
    S&=\int\limits_{\substack{\bar a=\alpha_1,\ \overline{u_4c}=\gamma_1\\
u_1\mid a-\chi^{\times}(\alpha)\\
u_2\mid a}}
|u_4|\,|u_3|\,|d(a,c)|
+\!\!\int\limits_{\substack{\bar a=\alpha_1,\ \overline{u_4c}=\gamma_1\\
u_1\mid a-\chi^{\times}(\alpha)\\
\pi a\mid u_2\\
u_2\mid a((a-1)u_4c+\chi^{\times}(\beta)-\chi^{\times}(\gamma))}}
|u_4|\,|u_3|\,|d(a,c)|\\
&=:S_1+S_2.
\end{align*}

\medskip

We compute $S_1$. Since $\pi|u_2$, we have $S_1=0$ unless $\alpha_1=0$, or equivalently, $\alpha=0$. In that case, the conditions $\bar{a}=\alpha_1$ and $u_1\mid a$ follow from $u_2\mid a$ since $u_1\mid u_2$ and $\pi\mid u_2$, so they can be omitted.
Hence
\begin{align*}
    S_1&=\chi_0(\alpha)\int\limits_{\substack{\bar{u_4c}=\gamma_1\\u_2|a}}|u_3u_4||d(a,c)|=\chi_0(\alpha)|u_2u_3u_4|\,\cdot\,\int\limits_{\bar{u_4c}=\gamma_1}|dc|\\
    &=\begin{cases}
       \chi_0(\alpha) |\pi||u_2u_3u_4|&\text{if $u_4$ is a unit,}\\ \chi_0(\alpha,\gamma^2-\gamma)|u_2u_3u_4|&\text{if $\pi\mid u_4$.}
    \end{cases}
\end{align*}

\medskip

We now compute $S_2$.
We claim that the integral is zero unless $\beta=\gamma$.
Indeed, the first, third, and fourth conditions defining the domain imply
\[
(\alpha_1-1)\gamma_1+\chi^{\times}(\beta)-\chi^{\times}(\gamma)=0.
\]
Since $\beta_1=\alpha_1\gamma_1$, this can be rewritten as
\[
\beta_1-\gamma_1+\chi_0(\beta_1)-\chi_0(\gamma_1)=0,
\]
i.e.\ $\beta-\gamma=0$.

Thus, for the computation we assume  $\beta=\gamma$,
which implies $\alpha_1=\alpha$.
The last condition in the domain of integration can then be written as
\[
u_2\mid \big(a-\chi^{\times}(\alpha)\big)u_4c.
\]
To proceed with the calculation of $S_2$ and $S$, we analyze the cases $u_2\mid u_4$ and $\pi u_4\mid u_2$ separately.

\medskip

\begin{itemize}
\item Assume that $u_2\mid u_4$. Then $\pi\mid u_4$ and $u_1$ is a unit.
Note that the condition $\bar{u_4c}=\gamma_1$ implies that $S_2=0$ unless $\gamma_1=0$, and in the latter case the condition is redundant and can be omitted.
Therefore,
\begin{align*}
    S_2=\chi_0(\beta-\gamma,\gamma^2-\gamma)\int\limits_{\substack{\bar{a}=\alpha\\ \pi a\mid u_2}}|u_3u_4||d(a,c)|.
\end{align*}
If $\alpha=0$, we may substitute $a=\pi a'$ and obtain
\[
S_2=\chi_0(\beta-\gamma,\gamma^2-\gamma)|\pi|\,|u_3u_4|\int_{\pi^2a\mid u_2}|da|
=\chi_0(\beta-\gamma,\gamma^2-\gamma)
|\pi|\,|u_3u_4|\bigl(1-|u_2/\pi|\bigr).
\]
If $\alpha\neq 0$, then the integral is defined only by $\bar a=\alpha$,
and so $S_2=|\pi|\,|u_3u_4|$.
Thus,
\begin{align*}
S_2
&=
\chi_0(\alpha, \beta-\gamma, \gamma^2-\gamma)\,|\pi|\,|u_3u_4|\bigl(1-|u_2/\pi|\bigr)
+\chi^{\times}(\alpha)\chi_0(\beta-\gamma,\gamma^2-\gamma)\,|\pi|\,|u_3u_4|\\
&=\chi_0(\beta-\gamma,\gamma^2-\gamma)|\pi u_3u_4|-\chi_0(\alpha,\beta-\gamma,\gamma^2-\gamma)|u_2u_3u_4|.
\end{align*}

Taking into account that $S=\chi_0\big(\beta^2-\beta-\alpha(\gamma^2-\gamma)\big)S$, we obtain that if $\pi\mid u_2\mid u_4$, then for any $\alpha,\beta,\gamma\in \o/\m$ we have
\begin{align*}
    S&=\chi_0(\alpha,\beta^2-\beta,\gamma^2-\gamma)|u_2u_3u_4|+\chi_0(\beta-\gamma,\gamma^2-\gamma)|\pi u_3u_4|-\chi_0(\alpha,\beta-\gamma,\gamma^2-\gamma)|u_2u_3u_4|\\
    &=\big(\chi_0(\alpha,\beta,\gamma-1)+\chi_0(\alpha,\beta-1,\gamma)\big)|u_2u_3u_4|+\chi_0(\beta-\gamma,\gamma^2-\gamma)|\pi u_3 u_4|
\end{align*}

\medskip

\item We now assume that $\pi u_4\mid u_2$.
Then
\begin{align*}
S_2
&=\chi_0(\beta-\gamma)\int\limits_{\substack{\bar a=\alpha,\ \overline{u_4c}=\gamma_1\\
u_1\mid a-\chi^{\times}(\alpha)\\
\pi a\mid u_2\\
u_2\mid (a-\chi^{\times}(\alpha))u_4c}}
|u_4|\,|u_3|\,|d(a,c)|\\
&=\chi_0(\beta-\gamma) \underbrace{\int\limits_{\substack{\bar a=\alpha,\ \overline{u_4c}=\gamma_1\\
u_1\mid a-\chi^{\times}(\alpha)\\
\pi a\mid u_2\\
u_2\mid (a-\chi^{\times}(\alpha))u_4}}
|u_4|\,|u_3|\,|d(a,c)|}_{=: S_{2,1}}+\chi_0(\beta-\gamma,\gamma^2-\gamma)\underbrace{\int\limits_{\substack{\bar a=\alpha,\ \overline{u_4c}=\gamma_1\\
u_1\mid a-\chi^{\times}(\alpha)\\
\pi a\mid u_2\\
\pi (a-\chi^{\times}(\alpha))u_4\mid u_2\\
\frac{u_2}{(a-\chi^{\times}(\alpha))u_4}\mid c}}
|u_4|\,|u_3|\,|d(a,c)|}_{=:S_{2,2}}.
\end{align*}

We compute $S_{2,1}$. 
We show first that the condition $u_1\mid a-\chi^{\times}(\alpha)$ can be removed.
This is obvious if $u_1$ is a unit.
If $\pi\mid u_1$, then this condition follows from the last condition and the hypothesis $u_1u_4\mid u_2$.
Finally, conditions $\pi u_4\mid u_2$ and $u_2\mid \big(a-\chi^\times(\alpha)\big)u_4$
imply $\alpha-\chi^{\times}(\alpha)=0$,
hence $\alpha=0$ or $1$, so the condition $\bar a=\alpha$ can be omitted.
Therefore,
\begin{align*}
S_{2,1}
&=\chi_0(\alpha^2-\alpha)\int\limits_{\substack{\pi a\mid u_2\\ \frac{u_2}{u_4}\mid a-\chi^{\times}(\alpha)}}
|u_3|\,|u_4|\,|da|\,\cdot\,\int\limits_{\bar{u_4c}=\gamma_1}|dc|\\
&=\chi_0(\alpha^2-\alpha)\,
\int\limits_{\substack{\pi\bigl(a\frac{u_2}{u_4}+\chi^{\times}(\alpha)\bigr)\mid u_2}}
|u_2u_3||da|\,\cdot\,\int\limits_{\bar{u_4c}=\gamma_1}|dc|\\
&=\left(\chi_0(\alpha)\int_{\pi a\mid u_4}|u_2u_3||da|+\chi_0(\alpha-1)\int|u_2u_3||da|\right)\cdot \int_{\bar{u_4c}=\gamma_1}|dc|\\
&=\big(\chi_0(\alpha)|u_2u_3|(1-|u_4|)+\chi_0(\alpha-1)|u_2u_3|\big)\cdot \int_{\bar{u_4c}=\gamma_1}|dc|.
\end{align*}

We now compute $S_{2,2}$.
The condition $\overline{u_4c}=\gamma_1$ and the last condition in the domain imply $\gamma_1=0$, that is $\gamma^2-\gamma=0$,
in which case $\overline{u_4c}=\gamma_1$ can be omitted.
The condition $\pi a\mid u_2$ can also be omitted:
this is clear if $\alpha\neq 0$, while if $\alpha=0$ then $\chi^{\times}(\alpha)=0$,
hence it follows from $\pi au_4\mid u_2$.
After integrating with respect to $c$, we obtain
\begin{align*}
S_{2,2}
&=\chi_0(\gamma^2-\gamma)\,|u_2u_3|
\int\limits_{\substack{\bar a=\alpha\\ u_1\mid a-\chi^{\times}(\alpha)\\ a-\chi^{\times}(\alpha)\mid \frac{u_2}{\pi u_4}}}
\frac{1}{|a-\chi^{\times}(\alpha)|}\,|da|\\
&=\chi_0(\gamma^2-\gamma)\,|u_2u_3|
\int\limits_{\substack{\overline{a}=\alpha-\chi^{\times}(\alpha)\\ u_1\mid a\mid \frac{u_2}{\pi u_4}}}
\frac{1}{|a|}\,|da|.
\end{align*}
If $\alpha=0$ or $1$, then the last integral equals
\[
\int\limits_{\substack{\bar a=0\\ u_1\mid a\mid \frac{u_2}{\pi u_4}}}
\frac{1}{|a|}\,|da|
=
\int\limits_{\pi^{1-\chi_0(\bar u_1)}u_1\mid a\mid \frac{u_2}{\pi u_4}}
\frac{1}{|a|}\,|da|
=
\int\limits_{\pi a\mid \frac{u_2}{\pi^{1-\chi_0(\bar u_1)}u_1u_4}}
\frac{1}{|a|}\,|da|.
\]
If $\alpha\neq 0,1$, then $u_1$ is necessarily a unit (otherwise the integral is $0$),
and in this case the integral is $|\pi|$.
Summarizing,
\[
S_{2,2}
=
\chi_0(\alpha^2-\alpha,\gamma^2-\gamma)\,|u_2u_3|
\int_{\pi a\mid \frac{u_2}{\pi^{1-\chi_0(\bar u_1)}u_1u_4}}
\frac{1}{|a|}\,|da|
+\chi^{\times}(\bar u_1)\chi^{\times}(\alpha^2-\alpha)\chi_0(\gamma^2-\gamma)\,|\pi|\,|u_2u_3|.
\]

Finally, using that $S=\chi_0\big(\beta^2-\beta-\alpha(\gamma^2-\gamma)\big)S$ and the formulas for $S_1$, $S_{2,1}$ and $S_{2,2}$, we obtain that if $\pi u_4\mid u_2$, then for any $\alpha,\beta,\gamma\in \o/\m$ we have 
\begin{itemize}
    \item If $u_4$ is a unit then
    \begin{align*}
    S&=\left(\chi_0(\alpha,\beta^2-\beta)+\chi_0(\alpha-1,\beta-\gamma)+\chi^{\times}(\bar u_1)\chi^{\times}(\alpha^2-\alpha)\chi_0(\beta-\gamma,\gamma^2-\gamma)\right)|u_2u_3||\pi|\\
    &\quad+\chi_0(\alpha^2-\alpha,\beta-\gamma,\gamma^2-\gamma)\,|u_2u_3|
\int_{\pi a\mid \frac{u_2}{\pi^{1-\chi_0(\bar u_1)}u_1}}
\frac{1}{|a|}\,|da|.
\end{align*}
\item If $\pi\mid u_4$ then
\begin{align*}
    S&=\left(\chi_0(\alpha,\beta^2-\beta,\gamma^2-\gamma)-\chi_0(\alpha,\beta-\gamma,\gamma^2-\gamma)\right)|u_2u_3u_4|+\chi_0(\alpha^2-\alpha,\beta-\gamma,\gamma^2-\gamma)|u_2u_3|\\
    &\quad +\chi_0(\alpha^2-\alpha,\beta-\gamma,\gamma^2-\gamma)\,|u_2u_3|
\int_{\pi a\mid \frac{u_2}{\pi^{1-\chi_0(\bar u_1)}u_1u_4}}
\frac{1}{|a|}\,|da|\\
&\quad
+\chi^{\times}(\bar u_1)\chi^{\times}(\alpha^2-\alpha)\chi_0(\beta-\gamma,\gamma^2-\gamma)\,|\pi|\,|u_2u_3|.
\end{align*}
\end{itemize}
The proof of the lemma follows clearly from these computations.
\end{itemize}
\end{itemize}
\end{proof}

For $\alpha,\beta,\gamma\in \o/\m$ we write $\tau_{\alpha,\beta,\gamma}$ for $\tau_{I,E}$, where
$$E=\begin{pmatrix}
1 & 0 & 0 \\
\alpha & 1 & 0 \\
\beta & \gamma & 1
\end{pmatrix}.$$
In other words, $\tau_{\alpha,\beta,\gamma}$ is the number of $\{1,2\}$-reduced
matrices in $\on{GL}_3(\o/\m)$ obtained from $E$ by
permutation of columns. To compute $\tau_{\alpha,\beta,\gamma}$ explicitly, we list all possible
$\{1,2\}$-reduced matrices together with their associated lower triangular
matrices.

\medskip

\begin{multicols}{2}
\begin{enumerate}
\item
$\begin{pmatrix}
0&0&1\\
0&1&*\\
1&*&*
\end{pmatrix}
\;\Rightarrow\;
\begin{pmatrix}
1&0&0\\
*&1&0\\
*&*&1
\end{pmatrix}$

\item
$\begin{pmatrix}
0&1&0\\
0&0&1\\
1&*&*
\end{pmatrix}
\;\Rightarrow\;
\begin{pmatrix}
1&0&0\\
0&1&0\\
*&*&1
\end{pmatrix}$

\item
$\begin{pmatrix}
0&0&1\\
1&0&*\\
0&1&*
\end{pmatrix}
\;\Rightarrow\;
\begin{pmatrix}
1&0&0\\
*&1&0\\
*&0&1
\end{pmatrix}$

\item
$\begin{pmatrix}
1&0&0\\
0&0&1\\
0&1&*
\end{pmatrix}
\;\Rightarrow\;
\begin{pmatrix}
1&0&0\\
0&1&0\\
0&*&1
\end{pmatrix}$

\item
$\begin{pmatrix}
0&1&0\\
1&*&0\\
0&0&1
\end{pmatrix}
\;\Rightarrow\;
\begin{pmatrix}
1&0&0\\
*&1&0\\
0&0&1
\end{pmatrix}$

\item
$\begin{pmatrix}
1&0&0\\
0&1&0\\
0&0&1
\end{pmatrix}
\;\Rightarrow\;
\begin{pmatrix}
1&0&0\\
0&1&0\\
0&0&1
\end{pmatrix}$
\end{enumerate}
\end{multicols}

From this list we deduce that
\[
\tau_{\alpha,\beta,\gamma}
=
\begin{cases}
1, & \text{if } \alpha\gamma \neq 0,\\[4pt]
2, & \text{if exactly one of $\alpha,\gamma$ is zero and $\beta \neq 0$,}\\[4pt]
3, & \text{if exactly one of $\alpha,\beta,\gamma$ is non-zero,}\\[4pt]
6, & \text{if } \alpha = \beta = \gamma = 0.
\end{cases}
\]

Let
\begin{align*}
    S_i(x_0,x_1,x_2)=\sum_{\alpha,\beta,\gamma}\tau_{\alpha,\beta,\gamma}S_{\alpha,\beta,\gamma}(u_1,u_2,u_3,u_4)
\end{align*}
where $(u_1,u_2,u_3,u_4)$ is
\medskip
\begin{multicols}{2}
    \begin{itemize}
        \item $(1,1,\pi x_2,)$ if $i=2$,
        \item $(1,1,\pi^2 x_1x_2,\pi x_2)$ if $i=3$,
        \item $(1,\pi x_1, \pi^2x_1^2x_1,1)$ if $i=4$,
        \item $(\pi x_1,\pi^2x_1x_2,\pi^3x_1x_2^2,1)$ if $i=5$,
        \item $(1,\pi x_1,\pi^2x_0x_1^2x_2,\pi x_1)$ if $i=6$,
        \item $(1,\pi^2 x_1x_2,\pi^4x_0x_1^2x_2^2,\pi x_2)$ if $i=7$,
        \item $(\pi x_1, \pi^2 x_0 x_1x_2,\pi^3 x_0^2x_1x_2^2, \pi x_2)$ if $i=8$.
    \end{itemize}
\end{multicols}
Let \[\mc{W}_i(r_0,r_1,r_2)=\frac{1}{(1-q^{-1})^3}\int |x_0|^{r_1-1}|x_1|^{r_2-1}|x_2|^{r_3-1}S_i(x_0,x_1,x_2)|d(x_0,x_2,x_2)|.\]

Then
\begin{align*}
    W_i=W_i(t_0,t_1,t_2)=\mc{W}_i(r_0,r_1,r_2)
\end{align*}
for some linear functions $r_0,r_1,r_2$ on $t_0,t_1,t_2$.

\medskip

To compute $S_i(x_0,x_1,x_2)$ we only need to apply Lemma \ref{lem: computation of S} and the following computation of weighted coefficients, whose verification is straightforward.
\begin{itemize}
    \item $\sum_{\alpha,\beta,\gamma\in\o/\m}
\tau_{\alpha,\beta,\gamma}\,
\chi_0\!\bigl(\beta^2-\beta-\alpha(\gamma^2-\gamma)\bigr)
=
q^2+8q+1$.
\item $\sum_{\alpha,\beta,\gamma\in\o/\m}
\tau_{\alpha,\beta,\gamma}\,
\chi_0(\beta^2-\beta,\gamma^2-\gamma)
=
7(q+1)$.
\item $\sum_{\alpha,\beta,\gamma\in\o/\m}
\tau_{\alpha,\beta,\gamma}\,
\big(
\chi_0(\alpha,\beta^2-\beta)
+\chi_0(\alpha-1,\beta-\gamma)
\big)
=
6(q+1)$.
\item $\sum_{\alpha,\beta,\gamma\in\o/\m}
\tau_{\alpha,\beta,\gamma}\,
\chi^{\times}(\alpha^2-\alpha)\chi_0(\beta-\gamma,\gamma^2-\gamma)
=
4(q-2)$.
\item $\sum_{\alpha,\beta,\gamma\in\o/\m}
\tau_{\alpha,\beta,\gamma}\,
\chi_0(\alpha^2-\alpha,\beta-\gamma,\gamma^2-\gamma)=
12.$
\item $\sum_{\alpha,\beta,\gamma\in\o/\m}
\tau_{\alpha,\beta,\gamma}\,
\big(
\chi_0(\alpha,\beta,\gamma-1)
+\chi_0(\alpha,\beta-1,\gamma)
\big)
=
6$.
\item 
$
\sum_{\alpha,\beta,\gamma\in\o/\m}
\tau_{\alpha,\beta,\gamma}\,
\chi_0(\beta-\gamma,\gamma^2-\gamma)
=
4(q+1).$
\end{itemize}

We now present the calculus of $S_i(x_0,x_1,x_2)$ and $\mc{W}_i(r_0,r_1,r_2)$, and finally we express $W_i=W_i(t_0,t_1,t_2)$ as a rational function of
\begin{align*}
q,\quad    X=q^{-t_0},\quad Y=q^{-t_1}\quad\text{and}\quad Z=q^{-t_2}.
\end{align*}

\begin{itemize}
    \item The expression for $W_1$ is a particular case of \ref{eq: formula for W1}:
   \begin{align*}
W_1
&=
\frac{(1+q+q^2)(1+q)\,q^{-3}}{(1-X)(1-XY)(1-XZ)}.
\end{align*}
    \item We have $W_2=\mc{W}_2(t_0+t_2, t_0+t_1, t_0+t_1+t_2)$. Now
\begin{align*}
S(1,1,\pi x_2,1,\alpha,\beta,\gamma)
&=
\chi_0\!\bigl(\beta^2-\beta-\alpha(\gamma^2-\gamma)\bigr)\,|\pi^3x_2|;\\
S_2(x_0,x_1,x_2)&=
(q^2+8q+1)\,|\pi^3x_2|.
\end{align*}
\begin{align*}
\mc W_{2}(r_0,r_1,r_2)&=(q^2+8q+1)\cdot 
\frac{1}{(1-q^{-1})^3}
\int |x_0|^{r_0-1}|x_1|^{r_1-1}|x_2|^{r_2-1}\,|\pi^3 x_2|\,|d(x_0,x_1,x_2)|\\
&=(q^2+8q+1)\,q^{-3}\,
\zeta_q(r_0)\zeta_q(r_1)\zeta_q(r_2+1).
\end{align*}
Hence
\begin{align*}
W_2&=
\frac{(q^2+8q+1)\,q^{-3}}{(1-XZ)(1-XY)(1-q^{-1}XYZ)}.
\end{align*}
\item We have $W_3=\mc{W}_3(t_0+t_2, t_0+t_1+t_2,t_2)$. Now
\begin{align*}
S(1,1,\pi^2x_1x_2,\pi x_2,\alpha,\beta,\gamma)
&=
\chi_0(\beta^2-\beta,\gamma^2-\gamma)\,|\pi^4x_1x_2^2|;\\
S_3(x_0,x_1,x_2)
&=
7(q+1)\,|\pi^4x_1x_2^2|.
\end{align*}
Hence
\begin{align*}
\mc W_3(r_0,r_1,r_2)    &=7(q+1)\,
\frac{1}{(1-q^{-1})^3}
\int |x_0|^{r_0-1}|x_1|^{r_1-1}|x_2|^{r_2-1}\,
|\pi^4x_1x_2^2|\,|d(x_0,x_1,x_2)|\\
&=7(q+1)\,q^{-4}\,
\zeta_q(r_0)\zeta_q(r_1+1)\zeta_q(r_2+2).
\end{align*}
Hence
\begin{align*}
W_3=
\frac{7(q+1)\,q^{-4}}{(1-XZ)(1-q^{-1}XYZ)(1-q^{-2}Z)}.
\end{align*}
\end{itemize}
 For the calculation of $W_4,\, W_5,\, W_7\,$ and $W_8\,$, we shall need the following integral, where $t\in\mathbb{C}$ and $\epsilon\in\{0,1\}$.
\begin{align*}\label{eq:pi-a-divides-x}
\frac{1}{1-q^{-1}}
\int_{\pi^\epsilon a\mid x}\frac{1}{|a|}\,|x|^{t-1}\,|d(x,a)|
&=
|\pi|^{\epsilon t}\,\frac{1}{1-q^{-1}}
\int |a|^{t-1}|y|^{t-1}\,|d(y,a)|
=
|\pi|^{\epsilon t}\,(1-q^{-1})\,\zeta_q(t)^2.
\end{align*}

\begin{itemize}
\item We have $W_4=\mc{W}_4(t_0+t_1, t_0+2t_1+t_2,t_0+t_1+t_2)$. Now
\begin{align*}
S(1,\pi x_1,\pi^2x_1^2x_2,1,\alpha,\beta,\gamma)
&=
\Bigl(
\chi_0(\alpha,\beta^2-\beta)
+\chi_0(\alpha-1,\beta-\gamma)
+\chi^{\times}(\alpha^2-\alpha)\chi_0(\beta-\gamma,\gamma^2-\gamma)
\Bigr)\,|\pi^4x_1^3x_2| \\
&\quad
+\chi_0(\alpha^2-\alpha,\beta-\gamma,\gamma^2-\gamma)\,
|\pi^3x_1^3x_2|
\int_{\pi a\mid x_1}\frac{1}{|a|}\,|da|;\\
S_4(x_0,x_1,x_2)
&=
(10q-2)\,|\pi^4x_1^3x_2| +12\,|\pi^3x_1^3x_2|
\int_{\pi a\mid x_1}\frac{1}{|a|}\,|da|.
\end{align*}
\begin{align*}
\mc W_{4}(r_0,r_1,r_2)
&=\frac{1}{(1-q^{-1})^3}
\int |x_0|^{r_0-1}|x_1|^{r_1-1}|x_2|^{r_2-1}(10q-2)\,|\pi^4x_1^3x_2||d(x_0,x_1,x_2)|\\
&\quad+\frac{1}{(1-q^{-1})^3}
\int_{\pi a\mid x_1} |x_0|^{r_0-1}|x_1|^{r_1-1}|x_2|^{r_2-1}\cdot
12\,|\pi^3x_1^3x_2|\frac{1}{|a|}
\,|d(x_0,x_1,x_2,a)|\\
&=(10q-2)\,q^{-4}\,
\zeta_q(r_0)\zeta_q(r_1+3)\zeta_q(r_2+1)
+12\,q^{-6-r_1}\,(1-q^{-1})\zeta_q(r_0)\zeta_q(r_1+3)^2\zeta_q(r_2+1).    
\end{align*}
Hence, 
\begin{align*}
W_4&=
\frac{(10q-2)\,q^{-4}\,}{(1-XY)(1-q^{-3}XY^2Z)(1-q^{-1}XYZ)}+
\frac{12\,q^{-6}\,XY^2Z\,(1-q^{-1})\,}{(1-XY)(1-q^{-3}XY^2Z)^2(1-q^{-1}XYZ)}.
\end{align*}

\item We have $W_5=\mc{W}_5(t_0+t_1,\,t_1,\,t_0+2t_1+t_2)$. Note that
\begin{align*}
S(\pi x_1,\pi^2x_1x_2,\pi^3x_1x_2^2,1,\alpha,\beta,\gamma)
&=
\Bigl(
\chi_0(\alpha,\beta^2-\beta)
+\chi_0(\alpha-1,\beta-\gamma)
\Bigr)\,|\pi^6x_1^2x_2^3| \\
&\quad+\chi_0(\alpha^2-\alpha,\beta-\gamma,\gamma^2-\gamma)\,
|\pi^5x_1^2x_2^3|
\int_{a\mid x_2}\frac{1}{|a|}\,|da|;\\
S_5(x_0,x_1,x_2)
&=
6(q+1)\,|\pi^6x_1^2x_2^3| 
+12\,|\pi^5x_1^2x_2^3|
\int_{a\mid x_2}\frac{1}{|a|}\,|da|.
\end{align*}
\begin{align*}
\mc W_{5}(r_0,r_1,r_2)
&=\frac{1}{(1-q^{-1})^3}
\int |x_0|^{r_0-1}|x_1|^{r_1-1}|x_2|^{r_2-1}\, 6(q+1)\,|\pi^6x_1^2x_2^3||d(x_0,x_1,x_2)|\\
&\quad +\frac{1}{(1-q^{-1})^3}
\int_{a\mid x_2} |x_0|^{r_0-1}|x_1|^{r_1-1}|x_2|^{r_2-1}\,
12\,|\pi^5x_1^2x_2^3|\frac{1}{|a|}\,|d(a,x_0,x_1,x_2)|\\
&=6(q+1)\,q^{-6}\,
\zeta_q(r_0)\zeta_q(r_1+2)\zeta_q(r_2+3)
+12\,q^{-5}(1-q^{-1})\zeta_q(r_0)\zeta_q(r_1+2)\zeta_q(r_2+3)^2.
\end{align*}
Hence,
\begin{align*}
W_5&=
\frac{6(q+1)\,q^{-6}\,}{(1-XY)(1-q^{-2}Y)(1-q^{-3}XY^2Z)}
+
\frac{12\,q^{-5}(1-q^{-1})\,}{(1-XY)(1-q^{-2}Y)(1-q^{-3}XY^2Z)^2}.
\end{align*}
\item We have $W_6=\mc{W}_6(t_0+t_1+t_2,\,t_1+t_2,\,t_2)$. Note that
\begin{align*}
S(1,\pi x_1,\pi^2x_0x_1^2x_2,\pi x_1x_2,\alpha,\beta,\gamma)
&=
\Bigl(
\chi_0(\alpha,\beta,\gamma-1)
+\chi_0(\alpha,\beta-1,\gamma)
\Bigr)\,|\pi^4x_0x_1^4x_2^2| \\
&\quad
+\chi_0(\beta-\gamma,\gamma^2-\gamma)\,|\pi^4x_0x_1^3x_2^2|;\\
S_6(x_0,x_1,x_2)
&=
6\,|\pi^4x_0x_1^4x_2^2|
+4(q+1)\,|\pi^4x_0x_1^3x_2^2|.
\end{align*}
\begin{align*}
\mc W_{6}(r_0,r_1,r_2)
&=\frac{1}{(1-q^{-1})^3}
\int |x_0|^{r_0-1}|x_1|^{r_1-1}|x_2|^{r_2-1}
\left(
6\,|\pi^4x_0x_1^4x_2^2|
+4(q+1)\,|\pi^4x_0x_1^3x_2^2|
\right)\,|d(x_0,x_1,x_2)|\\
&=6\,q^{-4}\,
\zeta_q(r_0+1)\zeta_q(r_1+4)\zeta_q(r_2+2)
+4(q+1)\,q^{-4}\,
\zeta_q(r_0+1)\zeta_q(r_1+3)\zeta_q(r_2+2).
\end{align*}
Hence, 
\begin{align*}
W_6&=
\frac{6\,q^{-4}\,}{(1-q^{-1}XYZ)(1-q^{-4}YZ)(1-q^{-2}Z)}
+
\frac{4(q+1)\,q^{-4}\,}{(1-q^{-1}XYZ)(1-q^{-3}YZ)(1-q^{-2}Z)}.
\end{align*}
\item We have $W_7=\mc{W}_7(t_0+t_1+t_2, t_0+2t_1+t_2, t_1+t_2)$. Now
\begin{align*}
S(1,\pi^2x_1x_2,\pi^4x_1^2x_2^2x_0,\pi x_2,\alpha,\beta,\gamma)
&=
\Bigl(
\chi_0(\alpha,\beta,\gamma-1)
+\chi_0(\alpha,\beta-1,\gamma)
\Bigr)\,|\pi^7x_0x_1^3x_2^4| \\
&\quad
+\chi_0(\alpha^2-\alpha,\beta-\gamma,\gamma^2-\gamma)\,|\pi^6x_0x_1^3x_2^3| \\
&\quad
+\chi^{\times}(\alpha^2-\alpha)\chi_0(\beta-\gamma,\gamma^2-\gamma)\,|\pi^7x_0x_1^3x_2^3| \\
&\quad
+\chi_0(\alpha^2-\alpha,\beta-\gamma,\gamma^2-\gamma)\,|\pi^6x_0x_1^3x_2^3|
\int_{\pi a\mid x_1}\frac{1}{|a|}\,|da|;
\end{align*}
\begin{align*}
S_7(x_0,x_1,x_2)
&=
6\,|\pi^7x_0x_1^3x_2^4|
+(16q-8)\,|\pi^7x_0x_1^3x_2^3|
+12\,|\pi^6x_0x_1^3x_2^3|
\int_{\pi a\mid x_1}\frac{1}{|a|}\,|da|.
\end{align*}
\begin{align*}
\mc W_{7}(r_0,r_1,r_2)
&=\frac{1}{(1-q^{-1})^3}
\int |x_0|^{r_0-1}|x_1|^{r_1-1}|x_2|^{r_2-1}
\left(
6\,|\pi^7x_0x_1^3x_2^4|
+(16q-8)\,|\pi^7x_0x_1^3x_2^3|\right)|d(x_0,x_1,x_2)|\\
&
\quad +12\,\frac{1}{(1-q^{-1})^3}
\int_{\pi a\mid x_1} |x_0|^{r_0-1}|x_1|^{r_1-1}|x_2|^{r_2-1}|\pi^6x_0x_1^3x_2^3|\frac{1}{|a|}\,|d(a, x_0,x_1,x_2)|\\
&=6\,q^{-7}\,
\zeta_q(r_0+1)\zeta_q(r_1+3)\zeta_q(r_2+4)
+(16q-8)\,q^{-7}\,
\zeta_q(r_0+1)\zeta_q(r_1+3)\zeta_q(r_2+3)\\
&\quad
+12\,q^{-9-r_1}(1-q^{-1})\zeta_q(r_0+1)\zeta_q(r_1+3)^2\zeta_q(r_2+3).
\end{align*}
Hence, 
\begin{align*}
W_7&=
\frac{6\,q^{-7}\,}{(1-q^{-1}XYZ)(1-q^{-3}XY^2Z)(1-q^{-4}YZ)}
+
\frac{(16q-8)\,q^{-7}\,}{(1-q^{-1}XYZ)(1-q^{-3}XY^2Z)(1-q^{-3}YZ)}\\
&\quad
+
\frac{12\,q^{-9}\,XY^2Z\,(1-q^{-1})\,}{(1-q^{-1}XYZ)(1-q^{-3}XY^2Z)^2(1-q^{-3}YZ)}.
\end{align*}
\item We have $W_8=\mc{W}_8(t_0+2t_1+t_2,t_1,t_1+t_2)$. Note that
\begin{align*}
S(\pi x_1,\pi^2x_0x_1x_2,\pi^3x_0^2x_1x_2^2,\pi x_2,\alpha,\beta,\gamma)
&=
\Bigl(
\chi_0(\alpha,\beta,\gamma-1)
+\chi_0(\alpha,\beta-1,\gamma)
\Bigr)\,|\pi^6x_0^3x_1^2x_2^4| \\
&\quad
+\chi_0(\alpha^2-\alpha,\beta-\gamma,\gamma^2-\gamma)\,|\pi^5x_0^3x_1^2x_2^3| \\
&\quad
+\chi_0(\alpha^2-\alpha,\beta-\gamma,\gamma^2-\gamma)\,|\pi^5x_0^3x_1^2x_2^3|
\int_{\pi a\mid x_0}\frac{1}{|a|}\,|da|;
\end{align*}
\begin{align*}
{S}_8(x_0,x_1,x_2)=
6\,|\pi^6x_0^3x_1^2x_2^4|
+12\,|\pi^5x_0^3x_1^2x_2^3| 
+12\,|\pi^5x_0^3x_1^2x_2^3|
\int_{\pi a\mid x_0}\frac{1}{|a|}\,|da|.
\end{align*}
\begin{align*}
\mc W_{8}(r_0,r_1,r_2)
&=\frac{1}{(1-q^{-1})^3}
\int |x_0|^{r_0-1}|x_1|^{r_1-1}|x_2|^{r_2-1}\left(
6\,|\pi^6x_0^3x_1^2x_2^4|
+12\,|\pi^5x_0^3x_1^2x_2^3|\right)|d(x_0,x_1,x_2)|\\
&\quad 
+\frac{1}{(1-q^{-1})^3}
\int_{\pi a\mid x_0}
12\,|x_0|^{r_0-1}|x_1|^{r_1-1}|x_2|^{r_2-1}|\pi^5x_0^3x_1^2x_2^3|\frac{1}{|a|}\,|d(a, x_0,x_1,x_2)|\\
&=6\,q^{-6}\,
\zeta_q(r_0+3)\zeta_q(r_1+2)\zeta_q(r_2+4)
+12\,q^{-5}\,
\zeta_q(r_0+3)\zeta_q(r_1+2)\zeta_q (r_2+3)\\
&\quad
+12\,q^{-8-r_0}(1-q^{-1})\zeta_q(r_0+3)^2\zeta_q(r_1+2)\zeta_q(r_2+3).
\end{align*}
Hence,
\begin{align*}
W_8&=
\frac{6\,q^{-6}\,}{(1-q^{-3}XY^2Z)(1-q^{-2}Y)(1-q^{-4}YZ)}
+
\frac{12\,q^{-5}\,}{(1-q^{-3}XY^2Z)(1-q^{-2}Y)(1-q^{-3}YZ)}\\
&\quad
+
\frac{12\,q^{-8}\,XY^2Z\,(1-q^{-1})\,}{(1-q^{-3}XY^2Z)^2(1-q^{-2}Y)(1-q^{-3}YZ)}.
\end{align*}
\end{itemize}

\medskip

\hrule 

\medskip

Recall that our original goal was to compute $\Xi_{3,\{1,2\}}(s_0,s_1,s_2)$. We defined $(t_0,t_1,t_2)=(3s_0, s_1-2, 2s_2-2)$ and obtained
\begin{align*}
q^{t_1+t_2}\,\Xi_{3,\{1,2\}}(s_0,s_1,s_2)
&=X^2\,W_1+X\,W_2+XZ\,W_3+XY\,W_4+XY^2\,W_5+W_6+XY^2Z\,W_7+Y\,W_8.
\end{align*}
Using the above expressions of $W_1,\ldots,W_8$, a simple computation gives
\begin{proposition}\label{prop: formula for Xi for I=12}
Setting     $X=q^{-3s_0},Y=q^{-s_1+2},Z=q^{-2s_2+2}$, we have
\[
\Xi_{3,\{1,2\}}(s_0,s_1,s_2)=\frac{YZ\;P(X,Y,Z;q)}{q^{11}\,(1-X)(1-XY)(1-XZ)(1-q^{-2}Y)(1-q^{-2}Z)(1-q^{-3}YZ)(1-q^{-1}XYZ)}
\]
where
\[
P(X,Y,Z;q)=a_0+a_1X+a_2X^2+a_3X^3,
\]
with
\[
\begin{aligned}
a_0&=(4q^8+10q^7)+(8q^6-4q^5)Y-18q^4YZ,\\
a_1&=
(q^{10}+8q^9-3q^8-10q^7)
+(2q^8-11q^7-q^6)Z+(5q^8-20q^7-9q^6+4q^5)Y\\
&\quad+(-q^7-32q^6+6q^5+19q^4)YZ+
(-6q^5+19q^4+q^3)YZ^2\\&\quad
+(-5q^5+12q^4+5q^3)Y^2Z
+(12q^3+5q^2-q)Y^2Z^2,\\
a_2&=
(q^{11}+q^{10}-6q^9)
+(-q^9-7q^8+18q^7)Y
+(-q^9-4q^8+9q^7)Z\\
&\quad
+(-14q^8+5q^7+32q^6-5q^5)YZ
+(11q^6+6q^5-17q^4)YZ^2
+(8q^6+14q^5-10q^4-4q^3)Y^2Z\\
&\quad
+(13q^5+2q^4-18q^3-7q^2)Y^2Z^2
+(-7q^3-7q^2)Y^3Z^2
+(-6q^3-6q^2)Y^2Z^3,\\
a_3&=
(-q^{10}-2q^9+11q^8-6q^7)YZ
+(q^8+2q^7-8q^6+3q^5)YZ^2
+(q^8+2q^7-5q^6-6q^5)Y^2Z\\
&\quad+(q^7+q^6-13q^5-4q^4+3q^3)Y^2Z^2
+(-q^5-2q^4+4q^3+5q^2)Y^2Z^3\\
&\quad
+(-q^5-2q^4+5q^3+6q^2)Y^3Z^2
+(q^3+2q^2+2q+1)Y^3Z^3.
\end{aligned}
\]
\end{proposition}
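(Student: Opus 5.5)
The plan is to start from the identity established just before the statement,
\[
q^{t_1+t_2}\,\Xi_{3,\{1,2\}}(s_0,s_1,s_2)=X^2W_1+XW_2+XZW_3+XYW_4+XY^2W_5+W_6+XY^2ZW_7+YW_8,
\]
with $(t_0,t_1,t_2)=(3s_0,s_1-2,2s_2-2)$, and substitute the explicit rational expressions for $W_1,\dots,W_8$ obtained above. First, I will record the conversion of variables. Since $q^{-t_1}=q^{-s_1+2}=Y$ and $q^{-t_2}=q^{-2s_2+2}=Z$, the prefactor $q^{-t_1-t_2}$ is exactly $YZ$, which explains the factor $YZ$ in the numerator of the statement; similarly $X=q^{-3s_0}=q^{-t_0}$. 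So it suffices to show that the bracketed sum equals $P(X,Y,Z;q)$ divided by $q^{11}$ times the displayed denominator.

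Second, I will collect denominators. The factors appearing among the $W_i$ are $(1-X)$ (only in $W_1$), $(1-XY)$, $(1-XZ)$, $(1-q^{-2}Y)$, $(1-q^{-2}Z)$, $(1-q^{-1}XYZ)$, $(1-q^{-3}YZ)$, and also $(1-q^{-4}YZ)$ and $(1-q^{-3}XY^2Z)$, the latter even squared. The claimed denominator contains neither of the last two. So the key structural step is to show that these spurious poles cancel. I would group the terms in which they occur. For $(1-q^{-3}XY^2Z)$, the relevant terms are $XYW_4$, $XY^2W_5$, $XY^2ZW_7$ and $YW_8$. For $(1-q^{-4}YZ)$, they are $W_6$, $XY^2ZW_7$ and $YW_8$. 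For each group I will verify that the combined numerator vanishes to the required order on the corresponding hypersurface, e.g. by setting $X=q^3/(Y^2Z)$ and checking vanishing (and vanishing of the derivative for the double pole). A cleaner route is to pair the $12\,XY^2Z(1-q^{-1})/(1-q^{-3}XY^2Z)^2$ pieces with the $12/(1-q^{-3}XY^2Z)$ pieces, using $\frac{1}{1-u}+\frac{u}{(1-u)^2}\cdot(\ldots)$-type identities, so that the double poles telescope.

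Third, once the cancellation is established, I will put everything over the common denominator
\[
q^{11}(1-X)(1-XY)(1-XZ)(1-q^{-2}Y)(1-q^{-2}Z)(1-q^{-3}YZ)(1-q^{-1}XYZ)
\]
and expand the numerator as a polynomial in $X$. I will then read off $a_0,\dots,a_3$, checking the degree bound $\deg_X\le 3$ as a consistency test. This expansion is routine but long, and I would carry it out with computer algebra. As independent checks, I would compare the low-order coefficients of the resulting power series with direct counts $h_3(\{1,2\},r_0,r_1,r_2)$ for small indices. I would also verify that setting $s_0=s_1=s_2=s$ and adding $\frac{1}{1-q^{-3s}}+\Xi_{3,\{1\}}+\Xi_{3,\{2\}}$ from Proposition~\ref{prop: formula for ZI} reproduces the known formula for $\zeta^{\scriptscriptstyle <}_{\mathbb{Z}_p^3}(s)=\zeta^{\scriptscriptstyle 1,<}_{\mathbb{Z}_p^4}(s)$.

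The main obstacle is the cancellation of the poles at $q^{-3}XY^2Z=1$ (including its square) and at $q^{-4}YZ=1$. It is also the place where small errors in the weighted $\tau$-sums (e.g. $10q-2$, $16q-8$, $12$) would surface. I would therefore attack the cancellation first, symbolically. If it fails, I would recheck the values of $S_i$ from Lemma~\ref{lem: computation of S}, in particular the $u$-tuples for $i=4$ and $i=2$, whose listing above contains apparent typos, before trusting the final numerator.
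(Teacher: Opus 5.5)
Your proposal is correct and follows the paper's argument: substitute the displayed $W_1,\dots,W_8$ into $q^{t_1+t_2}\,\Xi_{3,\{1,2\}}=X^2W_1+XW_2+\cdots+YW_8$, use $q^{-t_1-t_2}=YZ$, and simplify over the common denominator; the paper simply calls this ``a simple computation''. Your explicit check that the spurious poles cancel is a useful addition, and it does go through with the displayed $W_i$. At $q^{-4}YZ=1$, the three terms from $W_6$, $XY^2ZW_7$ and $YW_8$ that contain $1-q^{-4}YZ$ sum to zero. At $q^{-3}XY^2Z=1$, the double-pole parts cancel in the pairs $(XYW_4,XY^2W_5)$ and $(XY^2ZW_7,YW_8)$, and the remaining simple-pole contributions also sum to zero.
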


\subsection{Application: The cotype zeta function of subalgebras of $\o^4$}\label{sec: the cotype zeta function of o4}

Any $\mathfrak{o}$-subalgebra of $\o^{n+1}$ of finite index containing the identity $(1,\ldots,1)$ has elementary divisors $\pi^{\alpha_1},\ldots,\pi^{\alpha_n},1$ for some $\alpha_1\geq\cdots\geq \alpha_n\geq 0$.
The cotype zeta function of subalgebras of $\o^{n+1}$ containing the identity, as defined in \cite{LeeLee2025Cotype}, is the multivariable series
\begin{align*}
    \sum_{\alpha_1\geq\cdots\geq \alpha_n\geq 0} f_{n+1}(\alpha_1,\ldots,\alpha_n)
    q^{-s_1\alpha_1}\cdots q^{-s_n\alpha_n},
\end{align*}
where $f_{n+1}(\alpha_1,\ldots,\alpha_n)$ denotes the number of $\o$-subalgebras of $\o^{n+1}$ with elementary divisors $\pi^{\alpha_1},\ldots,\pi^{\alpha_n},1$.

This is equal to the series
\begin{align*}
    \sum_{\alpha_1\geq\cdots\geq \alpha_n\geq 0}
    h_n(\alpha_1,\ldots,\alpha_n)q^{-s_1\alpha_1}\cdots q^{-s_n\alpha_n},
\end{align*}
where $h_n(\alpha_1,\ldots,\alpha_n)$ denotes the number of $\o$-submodules of $\o^{n}$ that are closed under multiplication and have elementary divisors $\pi^{\alpha_1},\ldots,\pi^{\alpha_n}$.

It is easy to see that the latter series equals
\[
\frac{1}{1-q^{-(s_1+\cdots+s_n)}}+
\sum_{\emptyset\neq I\subset [n-1]}
\Xi_{n,I}\!\left(
\frac{s_1+\cdots+s_n}{n},
\left(\frac{\sum_{i\leq \iota}s_i}{\iota}\right)_{\iota\in I}
\right).
\]

In particular, the cotype zeta function of $\o^4$ is given by
\begin{align*}
    \frac{1}{1-q^{-(s_1+s_2+s_3)}}+
    \Xi_{3,\{1\}}\!\left(\frac{s_1+s_2+s_3}{3},s_1\right)
    +\Xi_{3,\{2\}}\!\left(\frac{s_1+s_2+s_3}{3},\frac{s_1+s_2}{2}\right)
    +\Xi_{3,\{1,2\}}\!\left(\frac{s_1+s_2+s_3}{3},s_1,\frac{s_1+s_2}{2}\right).
\end{align*}

The formulas for 
$\Xi_{3,\{1\}}\!\left(\frac{s_1+s_2+s_3}{3},s_1\right)$
and 
$\Xi_{3,\{2\}}\!\left(\frac{s_1+s_2+s_3}{3},\frac{s_1+s_2}{2}\right)$
follow from Proposition~\ref{prop: formula for ZI}:
\begin{align*}
\Xi_{3,\{1\}}\!\left(\frac{s_1+s_2+s_3}{3},s_1\right)
&=
\frac{q^{-s_1}}{1-q^{-(s_1+s_2+s_3)-s_1+2}}
\left(
(1+q+q^2)\frac{q^{-(s_1+s_2+s_3)}}{1-q^{-(s_1+s_2+s_3)}}
+\frac{6}{1-q^{-s_1}}
\right),\\
\Xi_{3,\{2\}}\!\left(\frac{s_1+s_2+s_3}{3},\frac{s_1+s_2}{2}\right)
&=
\frac{q^{-(s_1+s_2)}}{1-q^{-(s_1+s_2+s_3)-(s_1+s_2)+2}}
\left(
(1+q+q^2)\frac{q^{-(s_1+s_2+s_3)}}{1-q^{-(s_1+s_2+s_3)}}
+\frac{7}{1-q^{-(s_1+s_2)}}
\right).
\end{align*}

The formula for 
$\Xi_{3,\{1,2\}}\!\left(\frac{s_1+s_2+s_3}{3},s_1,\frac{s_1+s_2}{2}\right)$
follows from Proposition~\ref{prop: formula for Xi for I=12}.

After computing the sum of these rational functions we obtain the following result.

\begin{proposition}\label{prop: the cotype zeta function of o4}
    Setting $x=q^{-s_1},\, y=q^{-s_2},\, z=q^{-s_3}$, the cotype zeta function of subalgebras of $\o^4$ containing the identity is 
    \[F(x,y,z)=
\frac{N(x,y,z)}
{(1-x)(1-xy)(1-xyz)(1-qx^2y)(1-q^2x^2yz)(1-q^2x^2y^2z)(1-q^3x^3y^2z)},
\]
where
\[
N(x,y,z)=A_0(x,y)+A_1(x,y)z+A_2(x,y)z^2+A_3(x,y)z^3,
\]
where
\[
\begin{aligned}
A_0(x,y)=\;&1+5x+6xy+(3q-2)x^2y+(3q-4)x^3y-6qx^3y^2-6qx^4y^2.\\
A_1(x,y)=\;&
(q-5)x^2y+(q-6)x^2y^2-(q+1)x^3y+(1-5q-5q^2)x^3y^2-(q+1)x^3y^3\\
&+(5-3q-14q^2)x^4y^2+(1+6q-6q^2-4q^3)x^4y^3\\
&+(q+q^2)x^5y^2+(8q+8q^2-7q^3)x^5y^3+(q+q^2)x^5y^4\\
&+(-q+4q^2+5q^3)x^6y^3+(-q+13q^3)x^6y^4+(-q^2+5q^3)x^7y^4.\\
A_2(x,y)=\;&
(5q^2-q^3)x^4y^3+(13q^2-q^4)x^5y^3+(5q^2+4q^3-q^4)x^5y^4\\
&+(q^3+q^4)x^6y^3+(-7q^2+8q^3+8q^4)x^6y^4+(q^3+q^4)x^6y^5\\
&+(-4q^2-6q^3+q^5+6q^4)x^7y^4+(-14q^3-3q^4+5q^5)x^7y^5\\
&-(q^5+q^4)x^8y^4+(q^5-5q^4-5q^3)x^8y^5-(q^5+q^4)x^8y^6\\
&+(-6q^5+q^4)x^9y^5+(-5q^5+q^4)x^9y^6.\\
A_3(x,y)=\;&
-6q^4x^7y^5+(3q^4-4q^5)x^8y^6-6q^4x^8y^5+(3q^4-2q^5)x^9y^6\\
&+6q^5x^{10}y^6+5q^5x^{10}y^7+q^5x^{11}y^7.
\end{aligned}
\]
\end{proposition}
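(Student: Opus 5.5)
The plan is to assemble the four pieces of the decomposition displayed just before the statement and reduce their sum to a common denominator. First, I will substitute $s_0=\frac{s_1+s_2+s_3}{3}$, $s_{1}\mapsto s_1$, $s_{2}\mapsto\frac{s_1+s_2}{2}$ into the variables of Proposition~\ref{prop: formula for Xi for I=12}. With $x=q^{-s_1}$, $y=q^{-s_2}$, $z=q^{-s_3}$ this gives
\[
X=q^{-3s_0}=xyz,\qquad Y=q^{-s_1+2}=q^2x,\qquad Z=q^{-2s_2'+2}=q^2xy .
\]
The denominator factors of $\Xi_{3,\{1,2\}}$ then become
\[
1-xyz,\quad 1-q^2x^2yz,\quad 1-q^2x^2y^2z,\quad 1-x,\quad 1-xy,\quad 1-q^3x^3y^2z,\quad 1-q^4x^3y^2z\cdot q^{-1}.
\]
Here I must check the last one: $q^{-1}XYZ=q^{3}x^3y^2z$ and $q^{-3}YZ=qx^2y$. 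So these reproduce exactly the seven factors in the claimed denominator, which is a good consistency check that the substitution is right.

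Next, I will rewrite the two single-index pieces given just above in the same variables. For $\Xi_{3,\{1\}}$, the denominators are $1-q^2x^2yz$, $1-xyz$ and $1-x$. For $\Xi_{3,\{2\}}$, they are $1-q^2x^2y^2z$, $1-xyz$ and $1-xy$. The leading term contributes $1/(1-xyz)$. All of these denominators divide the common denominator $D$ of the statement. The proof then reduces to the polynomial identity
\[
N=D\Big(\tfrac{1}{1-xyz}+\Xi_{3,\{1\}}+\Xi_{3,\{2\}}+\Xi_{3,\{1,2\}}\Big),
\]
where $\Xi_{3,\{1,2\}}$ contributes $YZ\,P(X,Y,Z;q)/q^{11}=q^{-7}x^2y\,P(xyz,q^2x,q^2xy;q)$ times the complementary factors.

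The main obstacle is purely computational. It consists of expanding $P(xyz,q^2x,q^2xy;q)$, which has a few dozen monomials, together with the cross-multiplied single-index terms, and collecting by powers of $z$ to obtain $A_0,\dots,A_3$. I would carry this out with computer algebra. Sanity checks are available without full expansion. Setting $z=0$ should recover the generating function of $\{1\}$-type and lower contributions. Comparing low-order coefficients also helps: the coefficient of $x$ must be $5$, consistent with $\binom{3}{1}_q$-type counts and $\theta_{3,1}=6$ minus overlap. Finally, specializing $x=y=z=q^{-s}$ should reproduce the Nakagawa formula for $\zeta_{\mathbb{Z}_p^4}^{1,<}(s)$ quoted in the introduction; this gives a strong independent verification of the result.
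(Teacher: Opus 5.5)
Your plan is the paper's own argument. Substitute $X=xyz$, $Y=q^2x$, $Z=q^2xy$ into Proposition~\ref{prop: formula for Xi for I=12}, add $1/(1-xyz)$ and the two single-index pieces from Proposition~\ref{prop: formula for ZI}, and check the numerator over the common denominator $D$. Your seven factors of $D$ are correct, although your displayed list repeats $1-q^3x^3y^2z$ in place of $1-qx^2y$.

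One correction to your sanity checks: setting $z=0$ kills $X=xyz$ but not $Y$ or $Z$. It therefore isolates the $r_0=0$ part (subalgebras not contained in $\pi\o^3$), not just the $\{1\}$-type terms, and $\Xi_{3,\{1,2\}}$ still contributes there through $a_0$. With that contribution included, the check does reproduce $A_0$.
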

\begin{remark}
Our formula coincides with the one predicted in \cite[Conjecture A.2]{ChintaIshamKaplan2024} in the case $\o=\mathbb{Z}_p$. 
\end{remark}

\bibliographystyle{abbrv}
\bibliography{reference}
\end{document}